\newtheorem{thm}{Theorem}[section]
\newtheorem{lemma}[thm]{Lemma}
\newtheorem{cor}[thm]{Corollary}
\newtheorem{rem}[thm]{Remark}
\renewenvironment{proof}[1][]{{\bfseries \itshape Proof\ifthenelse{\isempty{#1}}{}{ #1}: }}{\nopagebreak[4]\par\qed\par\br}
\newcommand{\var}[1]{\operatorname{Var}\left(#1\right)}
\newcommand{\cov}[2]{\operatorname{cov}\left(#1, #2\right)}
\newcommand{\e}[1]{\mathbb{E} \left[#1\right]}
\newcommand{\ex}{\mathbb{E}}
\newcommand{\p}[1]{\mathbb{P} \left(#1\right)}
\newcommand{\pr}{\mathbb{P}}
\newcommand{\inn}[1]{\operatorname{Int}\left(#1\right)}
\newcommand{\br}{\bigskip}
\newcommand{\bea}{\begin{eqnarray*}}
\newcommand{\eea}{\end{eqnarray*}}
\newcommand{\be}{\begin{eqnarray}}
\newcommand{\ee}{\end{eqnarray}}
\newcommand{\beq}{\begin{equation}}
\newcommand{\eeq}{\end{equation}}
\def\sp{\mathcal{S}_{p}}
\def\R{\mathbb{R}}
\def\N{\mathbb{N}}
\begin{document}

    \title{Determinants of Random Block Hankel Matrices}

    \author{
        {\small Dominik Tomecki}\\
        {\small Ruhr-Universit\"at Bochum }\\
        {\small Fakult\"at f\"ur Mathematik}\\
        {\small 44780 Bochum, Germany}\\
        {\small email: dominik.tomecki@rub.de}\\
        \and
        {\small Holger Dette} \\
        {\small Ruhr-Universit\"at Bochum} \\
        {\small Fakult\"at f\"ur Mathematik} \\
        {\small 44780 Bochum, Germany} \\
        {\small email: holger.dette@rub.de}
    }

    \maketitle

    \begin{abstract}
    
    We consider the moment space $\mathcal{M}_{2n+1}^{p_n} $ of moments up to the order $2n+1$ of  $p_n \times p_n$ real matrix measures defined on the interval $[0,1]$. 
    The asymptotic properties of the Hankel determinant $$\big \{  \log  \det (M_{i+j} ^{p_n} ) _{i,j=0,\ldots , \lfloor nt  \rfloor}  \big   \} _{ t \in [0,1]} 
    $$ 
of a uniformly distributed vector $(M_{1}, \dots, M_{2n+1})^t \sim \mathcal{U} (\mathcal{M}_{2n+1})$ are studied when the dimension $n$ of the moment space and the size of the matrices $p_n$ converge to infinity. In particular weak convergence of an appropriately centered and standardized version of
this process is established. Mod-Gaussian convergence is shown and several large and moderate deviation principles are derived.
Our results are based on  some new relations between determinants of subblocks of the Jacobi-beta-ensemble, which are of their own
interest and generalize  Bartlett decomposition-type results for the Jacobi-beta-ensemble from the literature.

    \end{abstract}

    Keyword and Phrases:  mod-$\phi$-convergence,   moment spaces, matrix measures, large deviations, 
    Jacobi-beta-ensemble, Bartlett-decomposition

    AMS Subject Classification: 60F05, 60F10, 44A60, 47B35


\section{Introduction}

A  $p \times p$  matrix $\mu = (\mu_{i,j})^p_{i,j=1}$ of signed real  measures $\mu_{i,j}$, such that for each Borel set $A \subset [0,1]$ the matrix $\mu(A) = (\mu_{i,j}(A))^p_{i,j=1}$ is symmetric and nonnegative definite, is called matrix measure  on the interval $[0,1]$.
Matrix measures have been studied extensively in the literature generalising many classical results  in the context of  moment theory, orthogonal
polynomials, quadrature formulas [see  \cite{krein1949} for an early reference and   \cite{durass1995},
 \cite{duran1999} and \cite{durlop1997}, \cite{gruenbaum2003}, \cite{grupactir2005}, 
\cite{dampussim2008}, \cite{gamnag2012} and \cite{gamnag2016}  for some more recent references among many others].  \\
In a recent paper \cite{detnag2012a}  defined  $ \mathcal{P}_p$ as the set of all matrix measures on the interval $[0,1]$ 
satisfying the condition  $ \int^1_0 d\mu(x) =I_p $ [here and throughout this paper $I_p$ denotes the $p \times p$  identity matrix]
and  studied a uniform distribution on the  moment space 
\be \label{1.1a}
\mathcal{M}_n^p= \Big \{ (M_1,\dots,M_n)^t \  \Big | \ M_j = \int^1_0 x^j d\mu (x), \ j=1,\dots,n ~,~\mu  \in \mathcal{P}_p \Big \}
\ee
of all (matrix) moments 
\be \label{1.1}
M_k = \int^1_0 x^k d\mu(x) =  \Big(\int_0^1  x^k d \mu_{i, j}(x)\Big )_{1 \le i, j \le p} ~;\quad k=0,1,2,\dots
\ee
up to the order $n$ corresponding to matrix  measures on the interval $[0,1]$. Note that $ \mathcal{M}_n^p  \subset
(\mathcal{S}_p)^n $, where $\mathcal{S}_p$ denotes the set of all nonnegative definite matrices of size $p \times p$.
If $(M_{1,n}^p, \dots, M_{n,n}^p)^t$ is a uniformly distributed vector on $\mathcal{M}_n^p$, 
\cite{detnag2012a}  showed that an appropriately centered and standardized version of the vector $(M_{1,n}^p, \dots, M_{k,n}^p)^t$ converges weakly 
to a  vector of $k$ independent $p \times p$ Gaussian ensembles, thus generalising the meanwhile classical results of \cite{chakemstu1993}
for the case $p=1$.
 \\
The one-dimensional case has also been studied intensively with respect to other properties of random moment sequences and we refer to 
\cite{gamloz2004}, \cite{lozada2005} for large deviation results and to \cite{detnag2012b} for some results on more general moment spaces. 
Recently \cite{dettom2016} examined the asymptotic properties  of a stochastic process of  Hankel determinants 
$$
\big \{  \log  \det (M_{i+j,2n}^p) _{i,j=0,\ldots , \lfloor nt  \rfloor}  \big   \} _{t \in [0,1]}
$$
of a uniformly distributed moment vector $(M_{1,2n}^p, \ldots , M_{2n,2n}^p )^t$ on the moment space  $\mathcal{M}_{2n}^p   $ in the case $p=1$ and derived weak convergence and large deviation principles for this process. 

In the present paper we will investigate properties of  a stochastic process corresponding to the determinant of  matrix valued random Hankel matrices, where
the dimension of the moment space and the dimension $p_n$ of the matrix measures converge to infinity. To be 
precise, consider a uniformly random vector $(M_{1, 2n + 1}^{p_n} , \ldots , M_{2n+1, 2n + 1}^{p_n })^t$ on the moment space $\mathcal{M}_{2n+1}^{p_n}$, where $p_n$ is a sequence of integers converging 
to infinity as $n\to \infty$ and define the stochastic process  
\begin{equation} \label{basicproc}
{H}_{n} (t) = 
\big \{  \log  \det (M_{i+j, 2n + 1} ^{p_n} ) _{i,j=0,\ldots , \lfloor nt  \rfloor}  \big   \} _{ t \in [0,1]}.
\end{equation} 
We establish weak convergence of the process  $ \{{H}_{n} (t)   \} _{t \in [0,1]} $ with a Gaussian limit, Mod-Gaussian convergence (for fixed $ t \in [0,1]$)
and derive several moderate and large  deviation principles. In Section~\ref{section_basics} we will present 
 some basic facts about matrix-valued moment spaces. 
Section~\ref{section_distribution} is devoted to the investigation of  distributional properties of  determinants corresponding to subblocks 
of  the Jacobi-beta-ensemble.  These results are of own interest and provide a new point on classical 
results about a Bartlett- type decomposition [see  \cite{bart1933}]  for the Jacobi-beta-ensemble [see  for example \cite{kshi1961}]. In Section
\ref{section_applied_convergence} we prove weak convergence of the  process \eqref{basicproc}.
 Finally, in Section~\ref{section_mod_phi} we  examine mod-$\phi$-convergence, as well as moderate and large deviations. 
 Our results are based on several delicate estimates  of the cumulants of logarithms of beta-distributed random variables, stated in the 
  Appendix, which also contains proofs of the more technical results and  some  inequalities about polygamma functions.

\section{Moment spaces of  matrix-valued meausres} 
\label{section_basics}

  We begin recalling some basic facts about the moment space 
       $\mathcal{M}_n^p$    defined in \eqref{1.1a}  [see   \cite{detstud2002}  for a detailed discussion].
   In the following we compare matrices   with respect to the Löwner (partial) ordering. Thus for two $p\times p$ symmetric matrices $A, B $ 
  we use the notation $   A < B$  ($ A \le B $) if and only if the difference $B - A$  is positive definite (positive semi-definite). 
We denote by $\mathcal{S}_p$ the set of nonnegative definite (symmetric) $p \times p$ matrices. Let $\mathcal{B}([0,1])$ denote the Borel field on  the interval $[0,1]$.
A map 
$$
\mu = (\mu_{i, j})_{1 \le i, j \le p}: \mathcal{B}([0,1]) \to \mathbb{R}^{p \times p}
$$ 
is called a \emph{matrix-valued measure}, if $\mu_{i, j}$ is a signed measure for all $1 \le i, j \le p$ and $\mu(A) \in \mathcal{S}_p$ for every Borel set $A \subset  [0,1]$.  We denote by $\mathcal{P}_p$ the set of all $p \times p$ matrix-valued 
measures on  the interval $[0,1]$ satisfying  $\mu([0,1]) = I_p$ and consider the $n$th moment space $\mathcal{M}_n^p$ defined in \eqref{1.1a}, 
    which is a subset of $(\mathcal{S}_p) ^n$. Note that in the one dimensional case $\mathcal{P}_1$ is the set of all probability measures on the interval $[0,1]$.

    \cite{detstud2002} introduced new ``coordinates'' for the moment space $\mathcal{M}_n^p$ defining  a one to one
 map from the interior of  $\mathcal{M}_n^p$ onto the product space
$(\mathcal{E}_p)^n$, where $\mathcal{E}_p$
 denotes the ``cube''
\begin{align*}
{\cal E}_p =\{ A \in \sp  ~|~ 0_p < A < I_p \}.
\end{align*}
Here and throughout the paper, $0_p$ denotes the $p \times p$ matrix with all elements equal to zero and $I_p$ denotes the $p \times p$ identity matrix. 

The new coordinates are called canonical moments  [see \cite{detstud2002}], and they are related to the Verblunsky coefficients,
which have been discussed for matrix measures on the unit circle [see \cite{dampussim2008} and \cite{simon2005a,simon2005b}]. They turn out to be extremely 
useful in analyzing the asymptotic properties of the stochastic process defined in \eqref{basicproc}.
The definition of matrix valued
canonical moments 
  relies on the introduction of Block-Hankel-matrices:
    \begin{align*}
        \underline{H}_{2k} &= 
        \begin{pmatrix}
            M_0 & \cdots & M_k \\
            \vdots & \ddots & \vdots \\
            M_k & \cdots & M_{2k}
        \end{pmatrix}
        &\overline{H}_{2k} = 
        \begin{pmatrix}
            M_1 - M_2 & \cdots & M_k - M_{k + 1} \\
            \vdots & \ddots & \vdots \\
            M_k  - M_{k + 1} & \cdots & M_{2k - 1} - M_{2k}
        \end{pmatrix} \\
        \underline{H}_{2k + 1} &= 
        \begin{pmatrix}
            M_1 & \cdots & M_{k + 1} \\
            \vdots & \ddots & \vdots \\
            M_{k + 1} & \cdots & M_{2k + 1}
        \end{pmatrix}
        &\overline{H}_{2k + 1} = 
        \begin{pmatrix}
            M_0 - M_1 & \cdots & M_k - M_{k + 1} \\
            \vdots & \ddots & \vdots \\
            M_k - M_{k + 1} & \cdots & M_{2k} - M_{2k + 1}
        \end{pmatrix}
    \end{align*}
    which - as in the one dimensional case - can be used to characterize elements of the moment space $\mathcal{M}_n^p$. More precisely,  the vector of matrices
     $M = (M_1, \ldots, M_n)^t \in ( \mathcal{S}_p)^n$  satisfies
         \begin{align*}
        M \in \mathcal{M}_p^n & \text{~ if and only if ~}  \underline{H}_k \ge 0_k, \overline{H}_k \ge 0_k  \text{~ for all ~} k \le n \\
        M \in \inn{\mathcal{M}_p^n} & \text{~ if and only if ~}  \underline{H}_k > 0_k, \overline{H}_k > 0_k  \text{~ for all ~} k \le n~,
    \end{align*}
    where $\inn{\mathcal{M}_p^n}$ denotes the interior of the set $ \mathcal{M}_p^n $.
We now introduce the vectors of matrices
    \begin{align*}
        \underline{h}_{2k}^t &= \big (M_{k + 1}, \ldots, M_{2k}\big ) ~,~
        \underline{h}_{2k - 1}^t = \big (M_k, \ldots, M_{2k - 1}) \\
        \overline{h}_{2k}^t &= \big (M_k - M_{k + 1}, \ldots, M_{2k - 1} - M_{2k}\big ) ~,~
        \overline{h}_{2k - 1}^t = \big (M_k - M_{k + 1}, \ldots, M_{2k - 2} - M_{2k - 1}\big )
    \end{align*}
   and define  the ``extremal'' matrices $    M_1^- = 0_p $,  $M_1^+= I_p $ and  
      $  M_2^+ = M_1$ (the phrase  extremal will shortly become clear).     If  $M$ is an element of the interior of the moment space $\mathcal{M}_p^n$, 
      the extremal moments of larger order are defined as
    \begin{align*}
        M_n^- &= \underline{h}_{n - 1}^t \underline{H}_{n - 2}^{-1} \underline{h}_{n - 1} \quad \text{ for } n \ge 2 \\
        M_n^+ &= M_{n - 1} - \overline{h}_{n - 1}^t \overline{H}_{n - 2}^{-1} \overline{h}_{n - 1} \quad \text{ for } n \ge 3 .
    \end{align*}
\cite{detstud2002}  showed that the extremal moments provide a convenient tool to characterise the moment space $\mathcal{M}_p^n$. In particular 
by considering Schur complements  of $\underline{H}_k$ and  $\overline{H}_k$, they showed that
    \begin{align*}
        (M_1, \ldots, M_n)^t \in \inn{\mathcal{M}_p^n} \text{~ if and only if ~}  M_k^- < M_k < M_k^+  \text{~ for all ~} k \le n ~.
    \end{align*}
 This property is then used to define for  a point
    $ (M_1, \ldots, M_n)^t \in \inn{\mathcal{M}_p^n} $ matrix valued canonical moments   as follows
    \begin{align} \label{defU}
        \mathcal{U}_{i} = (M_i^+ - M_i^-)^{-1/2}(M_i - M_i^-)(M_i^+ - M_i^-)^{-1/2} ~,~i=1, \ldots , n. 
    \end{align}
     Given $M_1, \ldots M_{k-1}$, the moment $M_k$ can be calculated from the canonical moment $U_k$ and therefore
 equation \eqref{defU} defines a one to one mapping
\begin{align*}
\varphi_p   : \left \{ \begin{array}{lll}
& \inn{ \mathcal{M}_n^p } \ & \to {\cal E}_p^n  \\
 &(M_{1,n}^p,\dots,M_{n,n}^p)^t & \mapsto \varphi_p (M_{1,n}^p,\dots,M_{n,n}^p) = (U_{1,n}^p,\dots ,U_{n,n}^p)^t\ ,
\end{array} \right .
\end{align*}
from the interior of the moment space onto ${\cal E}_p^n$.

We conclude this section with  a very  interesting and useful relation between canonical moments and determinants  of Hankel  matrices $\underline{H}_{2n} = \det (M_{i+j,2n}^p)_{i,j=0,\ldots ,n}$ corresponding to a
point $ (M_{1,2n}^p ,  \ldots , M_{2n,2n}^p)^t \in  \mbox{Int}( \mathcal{M}_{2n}^p )$, i.e 
    \begin{align}\label{produkt_h}
        \det \underline{H}_{2n}
        =& \prod \limits_{i = 1}^n \Big(\det(I_p - \mathcal{U}_{2i - 2,2n}^p) \det(\mathcal{U}_{2i - 1,2n}^p) \det(I_p - \mathcal{U}_{2i - 1,2n}^p)  \det(\mathcal{U}_{2i,2n}^p)\Big)^{n - i + 1} \\
        =& \prod \limits_{i = 1}^n  \big \{ \det(\mathcal{U}_{2i - 1,2n}^p)   \det(I_p - \mathcal{U}_{2i - 1,2n}^p)  \det(\mathcal{U}_{2i,2n}^p)
         \big\} ^{n- i + 1}  (\det(I_p - \mathcal{U}_{2i,2n}^p))^{n - i},\nonumber
    \end{align}
    where $U_0 = 0_p$ [see \cite{detstud2005} for a proof].

\section{The distribution of random Hankel block matrices} \label{section_distribution}
 By identifying a symmetric matrix $M = (m_{i, j})_{i, j = 1}^p \in \mathcal{S}_p$ with the vector containing the entries $m_{i, j}$ for $1 \le i \le j \le p$, we get a  subset of $\mathbb{R}^{p(p + 1)/2}$ with non-empty interior. This identification allows us to integrate on $\mathcal{S}_p$ via the usual Lebesgue-measure
 and to  define distributions on $\mathcal{S}_p$ by specifying their  (Lebesgue-)densities. We are particularly interested in matrix valued Beta distributions
 [see \cite{olkrub1964} or \cite{muir1982}]
  supported on the set $\mathcal{E}_p$ with a density proportional to 
      \begin{align} \label{jacbeta}
  f_{  \gamma , \delta }(U) =     (\det U)^{\gamma - e_1} (\det(I_p - U))^{\delta - e_1} I\{0_p < U < I_p\}.
    \end{align} 
    where  $e_1 = \frac{p + 1}{2}$ and the parameters $\gamma, \delta$ satisfy $\gamma, \delta > e_1 - 1$. 
    These distributions are a special case of the  
    Jacobi-beta-ensemble $J\beta E_p(\gamma, \delta)$, which defines a density to be proportional to  \eqref{jacbeta}, where the constant 
    $e_\beta$ is given by   $e_\beta = 1 + \frac{\beta}{2} (p - 1)$. The parameter $\beta$ varies when the entries of the matrix are are real ($\beta = 1$), complex ($\beta = 2$) or quaternions ($\beta = 4$) [see for example \cite{arashi2011}]. In the present paper, we will only consider the real case.
     
  Consider a uniform distribution  on the   $n$-th moment  space $\mathcal{M}_n^p$ defined in \eqref{1.1} and denote by  $\mathcal{U}_{i, n}^p$  
  the $i$-th canonical moment in \eqref{defU}, then it is shown in Theorem 3.5 of  \cite{detnag2012a} that 
  $\mathcal{U}_{i, n}^p , \ldots , \mathcal{U}_{i, n}^p$ are independent and distributed according to a Jacobi-beta-ensemble, that is
        \begin{align}
            \mathcal{U}_{i, n}^p \sim J\beta E_p(e_1 (n - i + 1), e_1 (n - i + 1)) \label{dist_can_mom}.
        \end{align}
>From this result and formula \eqref{produkt_h} it is obvious that the distribution of determinants of random variables 
governed by the  Jacobi-Ensemble will be essential for the following analysis of the process. Our first main result, which is of independent interest, 
provides an important tool to determine the distribution of the process $\{ H_n(t)\}_{t\in [0,1]} $ defined in \eqref{basicproc}.
Throughout this paper we will use the notation $M^{[k]} \in \R^{k \times k}$ for   the upper left $k \times k$ subblock of the matrix $M \in \R^{p\times p}$ and
$\beta (\gamma, \delta)$ denotes a Beta distribution on the interval $[0,1]$.
    
    \begin{thm}[Subblocks of Jacobi-Ensembles] \label{dist_jacobi}
        Assume $p > 1$ and that       
         $U \sim J\beta E_p(\gamma, \delta)$. If we denote by $V = U^{[p - 1]}$ the upper left $(p - 1) \times (p - 1)$ subblock of the random matrix  $U$, then
        \begin{align*}
        \Big (V, \frac{\det U}{\det V}, \frac{\det(I_p - U)}{\det(I_{p - 1} - V)} \Big) \stackrel{d}{=} J\beta E_{p - 1}(\gamma, \delta) \otimes (p_{p - 1, 1}, (1-p_{p - 1, 1}) p_{p - 1, 2}),
        \end{align*}
        where the random variables $p_{i, 1} \sim \beta(\gamma - i/2, \delta)$ and $p_{i, 2} \sim \beta(\delta - i/2, i/2)$ are beta-distributed and independent.
    \end{thm}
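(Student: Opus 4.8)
The plan is to exploit the block structure of $U$ together with Schur complement identities and a single radial integration. I would write $U$ in block form as $U = \left(\begin{smallmatrix} V & b \\ b^t & c \end{smallmatrix}\right)$, where $V = U^{[p-1]} \in \mathcal{S}_{p-1}$, $b \in \R^{p-1}$ and $c \in \R$; under the identification of $\mathcal{S}_{p}$ with $\R^{p(p+1)/2}$ this splits the Lebesgue measure as $dU = dV\,db\,dc$. The Schur complement formula gives $\det U = \det V \cdot (c - b^t V^{-1} b)$ and, applied to $I_p - U$, $\det(I_p - U) = \det(I_{p-1}-V)\cdot(1 - c - b^t(I_{p-1}-V)^{-1}b)$. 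Hence the two quotients in the statement are exactly $X := \det U / \det V = c - b^t V^{-1} b$ and $Y := \det(I_p-U)/\det(I_{p-1}-V) = 1 - c - b^t(I_{p-1}-V)^{-1}b$, while the constraint $0_p < U < I_p$ becomes $0_{p-1} < V < I_{p-1}$ together with $X > 0$ and $Y > 0$.

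Substituting these expressions into the density \eqref{jacbeta} and performing the triangular change of variables $(c,b) \mapsto (X,b)$, which has Jacobian one since $\partial X/\partial c = 1$ with $b$ held fixed, I would obtain for fixed $V$ a joint density of $(X,b)$ proportional to $X^{\gamma - e_1}(1 - X - b^t W b)^{\delta - e_1}$, where $W = V^{-1} + (I_{p-1}-V)^{-1}$ and I have used $Y = 1 - X - b^t W b$. The next step is to integrate out the $(p-1)$-dimensional vector $b$ retaining only the scalar $S := b^t W b$. Since $W$ is positive definite, the substitution $b = W^{-1/2}u$ followed by polar coordinates shows that the pushforward of Lebesgue measure under $b \mapsto S$ is proportional to $(\det W)^{-1/2} S^{(p-3)/2}\,dS$, so that the joint density of $(V, X, S)$ becomes proportional to $(\det V)^{\gamma - e_1}(\det(I_{p-1}-V))^{\delta - e_1}(\det W)^{-1/2}X^{\gamma - e_1}(1 - X - S)^{\delta - e_1}S^{(p-3)/2}$. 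Passing from $S$ to $Y = 1 - X - S$ (Jacobian one) then yields the joint density of $(V, X, Y)$.

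The decisive algebraic point, which I expect to be the crux, is the identity $W = V^{-1} + (I_{p-1}-V)^{-1} = [V(I_{p-1}-V)]^{-1}$, valid because $V$ and $I_{p-1}-V$ commute, whence $(\det W)^{-1/2} = (\det V)^{1/2}(\det(I_{p-1}-V))^{1/2}$. This factor shifts the exponents of $\det V$ and $\det(I_{p-1}-V)$ from $\gamma - e_1$, $\delta - e_1$ to $\gamma - e_1 + \tfrac12$, $\delta - e_1 + \tfrac12$; since $e_1 - \tfrac12 = \tfrac{(p-1)+1}{2}$ is precisely the normalising exponent in dimension $p-1$, the $V$-part is exactly the $J\beta E_{p-1}(\gamma,\delta)$ density. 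Crucially, the density now factors into a function of $V$ times a function of $(X,Y)$ over a product support, which yields both that $V \sim J\beta E_{p-1}(\gamma,\delta)$ and that $V$ is independent of $(X,Y)$; the $(X,Y)$-part is proportional to $X^{\gamma - e_1}Y^{\delta - e_1}(1 - X - Y)^{(p-3)/2}$.

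It remains to identify this last factor. Writing $i = p-1$ and $e_1 = \tfrac{p+1}{2}$, the exponents equal $\gamma - i/2 - 1$, $\delta - i/2 - 1$ and $i/2 - 1$, so $(X, Y, 1-X-Y)$ follows a Dirichlet distribution with parameters $(\gamma - i/2,\ \delta - i/2,\ i/2)$ on the simplex $\{X>0,\,Y>0,\,X+Y<1\}$. The stick-breaking representation of this Dirichlet law gives exactly $X = p_{i,1}$ and $Y = (1 - p_{i,1})p_{i,2}$ with independent $p_{i,1} \sim \beta(\gamma - i/2,\ \delta)$ and $p_{i,2} \sim \beta(\delta - i/2,\ i/2)$, matching the claim; a short direct density computation confirms it. Since both sides are probability densities proportional to the same function, equality of the normalising constants is automatic, and the parameter constraints $\gamma,\delta > e_1 - 1 = (p-1)/2$ ensure all exponents above are admissible. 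The hypothesis $p>1$ guarantees that the block $V$ is nontrivial and $i/2 > 0$.

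\bigskip
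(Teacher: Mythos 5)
Your proof is correct, but it follows a genuinely different route from the paper's. The paper starts from the Cholesky decomposition $U = \left(\begin{smallmatrix} T^t & 0 \\ t^t & \sqrt{t_p}\end{smallmatrix}\right)\left(\begin{smallmatrix} T & t \\ 0 & \sqrt{t_p}\end{smallmatrix}\right)$, passes to the variables $(V, v, t_p)$ with the auxiliary vector $v = (1-t_p)^{-1/2}(I_{p-1}+T(I_{p-1}-V)^{-1}T^t)^{1/2}t$, computes a nontrivial Jacobian determinant $\det M$ by differentiating $v$ and $t_p$ with respect to $B$ and $c$, and then identifies the law of $\|v\|_2^2$ via a lemma of Gupta and Song on spherically contoured densities. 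You instead stay in the raw block coordinates $(V,b,c)$, observe that the two ratios in the statement are exactly the two Schur complements $X = c - b^tV^{-1}b$ and $Y = 1-c-b^t(I_{p-1}-V)^{-1}b$, and reduce everything to the identity $V^{-1}+(I_{p-1}-V)^{-1} = [V(I_{p-1}-V)]^{-1}$ plus a single radial integration; I checked the exponent bookkeeping ($\gamma - e_1 + \tfrac12 = \gamma - \tfrac{(p-1)+1}{2}$, and $\gamma-e_1 = \gamma - i/2 - 1$, $(p-3)/2 = i/2-1$ with $i=p-1$) and the Dirichlet/stick-breaking identification, and they are all correct, as is the product structure of the support needed for independence. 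What your route buys is that every change of variables is either unit-Jacobian or a standard ellipsoid-volume computation, so the awkward $\det M$ calculation disappears, and the Dirichlet$(\gamma-i/2,\,\delta-i/2,\,i/2)$ structure of $(X,Y,1-X-Y)$ makes the form $(p_{p-1,1},(1-p_{p-1,1})p_{p-1,2})$ transparent rather than emerging from the ad hoc factorization $(1-t_p)(1-v^tv)$. What the paper's route buys is the explicit Cholesky variables, which tie the theorem directly to the Bartlett-type decomposition of Kshirsagar discussed in the subsequent remark. Both arguments ultimately rest on the same factorization of the $J\beta E_p$ density over a product support.
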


    \begin{proof}
    We consider the Cholesky decomposition of the matrix 
      \begin{align}
            U = \begin{pmatrix}V & B \\ B^t & c\end{pmatrix} =     \begin{pmatrix}
                T^t & 0 \\ 
                t^t & \sqrt{t_p}
            \end{pmatrix}   
                \begin{pmatrix}
                T & t \\ 
                0 & \sqrt{t_p}
            \end{pmatrix}
            = 
            \begin{pmatrix}T^t T & T^t t \\ t^t T & t^t t + t_p\end{pmatrix},\label{dist_jacobi_cholesky}
        \end{align}
        where $V= U^{[p - 1]}$ and   $T$ is an upper triangular   $(p - 1) \times (p - 1)$ matrix with strictly positive entries on the diagonal. 
         The matrix $U$ satisfies almost surely the inequality $0_p < U < I_p$. 
         As a matrix is positive definite if and only if its main subblock and the correpsonding Schur complement are positve definite, we obtain 
for the  terms $V$, $B$ and $c$ in \eqref{dist_jacobi_cholesky}:
        \begin{align}
         & 0_{p - 1} < V \text{ and } c - B^t V^{-1} B > 0 \label{dist_jacobi_schur_1} \\
   & V < I_{p - 1} \text{ and } 1 - c - B^t (I_{p - 1} - V)^{-1} B > 0 \label{dist_jacobi_schur_2}
        \end{align}
 This implies  $c < 1$ and we conclude  $t_p = c - t^t t < 1$. Therefore the random variable 
        \begin{align*}
            v = (1 - t_p)^{-1/2}(I_{p - 1} + T(I_{p - 1} - V)^{-1} T^t)^{1/2} t.
        \end{align*}
        is well defined. \\
  We will now determine the joint density of the random variables $V$, $v$ and $t_p$
  (up to a constant).
     For this purpose note that  the equation $c = t^t t + t_p$ 
        yields the Schur complement of $I_{p - 1} - V$ in $I_p - U$ as
        \begin{align*}
            1 - c - B^t (I_{p - 1} - V)^{-1} B   = &1 - t^t t - t_p - t^t T (I_{p - 1} - V)^{-1} T^t t \\
            =& (1 - t_p)(1 - t^t(I_{p - 1} + T (I_{p - 1} - V)^{-1} T^t)t(1 - t_p)^{-1}) \\
             =& (1 - t_p)(1 - v^tv),
        \end{align*}
   which yields combined with the well-known formula for the    determinant  of a   
        Schur-complement
         \begin{align}
         \det (I_p -U) = \det (I_{p-1} -V)    \cdot  (1 - t_p)(1 - v^tv). 
       \label{dist_jacobi_schur_3a}
        \end{align}
      Similarly, observing \eqref{dist_jacobi_cholesky}
        \begin{align} \label{dist_jacobi_schur_3b}
            \det(V) (c - B^t V^{-1} B) = \det U = (\det(T) \sqrt{t_p})^2 = \det(T^t T) t_p = \det(V) t_p,
        \end{align} 
         which  also  implies
        \begin{align}
            c - B^t V^{-1} B = t_p. \label{dist_jacobi_schur_4}
        \end{align}
        Using (\ref{dist_jacobi_schur_1}), (\ref{dist_jacobi_schur_2}), (\ref{dist_jacobi_schur_3a})  and 
         (\ref{dist_jacobi_schur_3b}) 
      it follows that the density of  $V$, $v$ and $t_p$ is proportional to  the function
        \begin{align}
     g_{  \gamma ,  \delta}(V)         &
            =(\det V)^{\gamma - e_1} t_p^{\gamma - e_1} (\det(I_{p - 1} - V))^{\delta - e_1} ((1 - t_p)(1 - v^tv))^{\delta - e_1}\label{dist_jacobi_density_u} 
            \det D^{-1}\\
            &~~~~~~\times  I\{0_{p - 1} < V < I_{p - 1}\} I\{0 < t_p < 1\} I\{v^tv < 1\} \nonumber,
        \end{align}
       where  $\det D^{-1}$ is  the Jacobi-determinant of the corresponding transformation from $U$ to $V$, $v$ and $t_p$.
       As this transformation  leaves the matrix
       $V$  unchanged  we obtain $\det D^{-1}= \det M^{-1}$, where the matrix $M$ is given by  
        \begin{align*}
            M = \begin{pmatrix}
            \frac{\partial v}{\partial B} & \frac{\partial v}{\partial c} \\[.3cm]
            \frac{\partial t_p}{\partial B} & \frac{\partial t_p}{\partial c}
        \end{pmatrix}
        \end{align*}
and 
        \begin{align*}
            \frac{\partial t_p}{\partial c} &= 1 ~,~
            \frac{\partial t_p}{\partial B} = -2B^t V^{-1}.
        \end{align*}
In order to calculate the remaining elements of the matrix $M$, we simplify the representation of $v$  using the formula
        \begin{align*}
            I_{p - 1} + T(I_{p - 1} - V)^{-1} T^t  = I_{p - 1} + (V^{-1} - I_{p - 1})^{-1} 
            = (I_{p - 1} - V)^{-1},
        \end{align*}
        where the second equality stems from an application of the principal axis transform. 
   From this equation, (\ref{dist_jacobi_cholesky}) and (\ref{dist_jacobi_schur_4}) we can rewrite the vector $v$ as
        \begin{align*}
            v = (1 - c + B^t V^{-1} B)^{-1/2}(I_{p - 1} - V)^{-1/2} (T^t)^{-1} B.
        \end{align*}
     Standard calculus,   observing this representation and (\ref{dist_jacobi_schur_4})
    now   gives 
        \begin{align*}
            \frac{\partial v}{\partial c} &= \frac{1}{2(1 - t_p)} v \\
            \frac{\partial v}{\partial B} &=- \frac{1}{1 -  t_p} v B^t V^{-1} + (1-t_p)^{-1/2} (I_{p - 1} - V)^{-1/2} (T^t)^{-1}~, 
        \end{align*}
       and it follows
               \begin{align*}
            \det M = \det\bigg(\frac{\partial v}{\partial B} - \frac{\partial v}{\partial c} \cdot \frac{\partial t_p}{\partial B}\bigg) = (1-t_p)^{-(p - 1)/2} (\det(I_{p - 1} - V))^{-(p - 1)/2} (\det V)^{-1/2}.
        \end{align*}
      From \eqref{dist_jacobi_density_u} we obtain that the joint density of $V$, $v$ and $t_p$ 
      is proportional to the function 
              \begin{align*}
            g_{  \gamma ,  \delta}(V)        & = (\det V)^{\gamma - e_1 + 1/2}( \det(I_{p - 1} - V))^{\delta - e_1 + 1/2} \cdot  t_p^{\gamma - e_1}  (1 - t_p)^{\delta - e_1 + (p - 1)/2} \cdot (1 - v^tv)^{\delta - e_1} \\
            &\qquad \times  I\{0_{p - 1} < V < I_{p - 1}\} I\{0 < t_p < 1\} I\{v^tv < 1\} \\
            &= (\det V)^{\gamma - p/2}( \det(I_{p - 1} - V)^{\delta - p/2} \cdot  t_p^{\gamma - (p - 1)/2 - 1}  (1 - t_p)^{\delta - 1} \cdot (1 - v^tv)^{\delta - (p + 1)/2} \\
            &\qquad  \times  I\{0_{p - 1} < V < I_{p - 1}\} I\{0 < t_p < 1\} I\{v^tv < 1\}
        \end{align*}
        Therefore the random variables $V$, $v$ and $t_p$ are independent with $V \sim J\beta E_{p - 1}(\gamma, \delta)$ and $t_p \sim \beta(\gamma - (p-1)/2, \delta)$.  Obviously,
        \begin{align*}
            \frac{\det U}{\det V} &= t_p ~,~~
            \frac{\det(I_p - U)}{\det(I_{p - 1} - V)}  = (1 - t_p)(1 - v^t v),
        \end{align*}
 and the assertion now follows if we can prove $1 - v^t v \sim \beta(\delta - (p - 1)/2, (p - 1)/2)$, or equivalently $v^t v \sim \beta((p - 1)/2, \delta - (p - 1)/2)$.         
        
        To see this, we will apply Lemma 2.1 in \cite{gupson1997}. Since $v$ has a density proportional to $g(\|v\|_2^2)$ with the function $g(x) = (1 - x)^{\delta - (p + 1)/2}I\{x < 1\}$, the density of $\|v\|_2$ is proportional to
        \begin{align*}
            x^{p - 2} g(x^2) I\{0 < x\} = x^{p - 2}(1 - x^2)^{\delta - (p + 1)/2} I\{0 < x < 1\}
        \end{align*}
        Using a simple substitution, the density of $\|v\|_2^2$ is therefore proportional to
        \begin{align*}
            x^{p/2 - 1}(1 - x)^{\delta - (p + 1)/2} I\{0 < x < 1\} x^{-1/2} = x^{(p - 1)/2 - 1} (1 - x)^{\delta - (p - 1)/2 - 1} I\{0 < x < 1\},
        \end{align*}
        i.e. $v^tv = \|v\|_2^2 \sim \beta((p - 1)/2, \delta - (p - 1)/2)$. This concludes the proof of Theorem~\ref{dist_jacobi}.
    \end{proof}

    \begin{thm} \label{jacobi_subblock}
        Assume $M \sim J\beta E_p(\gamma, \delta)$, and let $M^{[j]}$ denote the $j \times j$ upper left subblock of the matrix $M$. 
        There exist independent random variables $p_{0,1}, \ldots , p_{p-1,1}$
        and $p_{1,2}, \ldots , p_{p-1,2}$ with 
        \begin{align} \label{distpropa}
            p_{i, 1} &\sim \beta(\gamma - i/2, \delta)  ~,~i=0,\ldots ,p-1, \\
            p_{i, 2} &\sim \beta(\delta - i/2, i/2) ~,~i=1,\ldots ,p-1, \label{distpropb}
        \end{align} 
        such that for all real $a, b$ the identity
        \begin{align}
            &\log \Big((\det M^{[j]})^a \cdot (\det(I_j-M^{[j]}))^b\Big) \nonumber \\
            ={}& \log(p_{0, 1}^a (1-p_{0, 1})^b) + \sum \limits_{i = 1}^{j - 1}\left\{\log(p_{i, 1}^a (1-p_{i, 1})^b) + \log(p_{i, 2}^b)\right\} \label{ident_det_jacobi}
        \end{align}
        holds for all $1 \le j \le p$ simultaneously. In particular
        \begin{align*}
  \det M =  \prod_{i = 0}^{p - 1} p_{i, 1}.
        \end{align*}
    \end{thm}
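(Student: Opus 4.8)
The plan is to argue by induction on the dimension $p$, peeling off one row and column at a time and invoking Theorem~\ref{dist_jacobi} at each step. The decisive idea is to \emph{construct} all the variables $p_{i,1}$ and $p_{i,2}$ as explicit functions of the single matrix $M$, so that the identity \eqref{ident_det_jacobi} holds pathwise (and hence simultaneously for all $1 \le j \le p$), while the distributional and independence claims are supplied by Theorem~\ref{dist_jacobi}. For the base case $p=1$ one has $e_1 = 1$, so $M = M^{[1]}$ is a scalar with density proportional to $M^{\gamma-1}(1-M)^{\delta-1}$, i.e.\ $M \sim \beta(\gamma,\delta) = \beta(\gamma - 0/2,\delta)$; setting $p_{0,1} := M$ makes the $j=1$ identity trivial.

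For the inductive step I would apply Theorem~\ref{dist_jacobi} to $M \sim J\beta E_p(\gamma,\delta)$, write $V := M^{[p-1]}$, and define
\[
p_{p-1,1} := \frac{\det M}{\det V}, \qquad p_{p-1,2} := \frac{1}{1-p_{p-1,1}}\cdot\frac{\det(I_p - M)}{\det(I_{p-1}-V)}.
\]
Theorem~\ref{dist_jacobi} guarantees that $V \sim J\beta E_{p-1}(\gamma,\delta)$, that $p_{p-1,1} \sim \beta(\gamma-(p-1)/2,\delta)$ and $p_{p-1,2} \sim \beta(\delta-(p-1)/2,(p-1)/2)$ in accordance with \eqref{distpropa}--\eqref{distpropb}, and---crucially---that $V$ is independent of the pair $(p_{p-1,1},p_{p-1,2})$. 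Applying the induction hypothesis to $V$ yields variables $p_{0,1},\ldots,p_{p-2,1}$ and $p_{1,2},\ldots,p_{p-2,2}$ which are \emph{functions of $V$}, are jointly independent with the prescribed beta laws, and satisfy \eqref{ident_det_jacobi} for all $1 \le j \le p-1$. Since these depend only on $V$, whereas $(p_{p-1,1},p_{p-1,2})$ is independent of $V$, the whole family $\{p_{i,1}\}_{i=0}^{p-1}\cup\{p_{i,2}\}_{i=1}^{p-1}$ is jointly independent with the required marginals.

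It then remains to verify the identity. For $1 \le j \le p-1$ the subblock satisfies $M^{[j]} = V^{[j]}$, so \eqref{ident_det_jacobi} is inherited verbatim from the induction hypothesis. For $j=p$, the defining relations give $\det M = p_{p-1,1}\det V$ and $\det(I_p-M) = (1-p_{p-1,1})\,p_{p-1,2}\,\det(I_{p-1}-V)$; forming $a\log(\,\cdot\,)+b\log(\,\cdot\,)$ and substituting the $j=p-1$ identity for $V$ contributes exactly the missing $i=p-1$ summand $\log(p_{p-1,1}^a(1-p_{p-1,1})^b)+\log(p_{p-1,2}^b)$, which produces \eqref{ident_det_jacobi} for $j=p$. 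The ``in particular'' statement is then the special case $j=p$, $a=1$, $b=0$, giving the telescoping product $\det M = \prod_{i=0}^{p-1}p_{i,1}$.

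I expect the only genuinely delicate point to be the bookkeeping of \emph{joint} independence across the levels of the recursion: merely knowing that each $j$-identity holds in distribution is not enough, so one must insist that every extracted variable is a measurable function of the appropriate lower-dimensional subblock. This is precisely what lets the independence of $V$ from $(p_{p-1,1},p_{p-1,2})$ furnished by Theorem~\ref{dist_jacobi} be chained through the induction to give independence of the entire family on one common probability space.
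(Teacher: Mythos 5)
Your proof is correct and follows essentially the same route as the paper: the same explicit construction of $p_{i,1}$ and $p_{i,2}$ as ratios of determinants of nested subblocks (making the identity telescope pathwise), the same induction on $p$ driven by Theorem~\ref{dist_jacobi}, and the same observation that joint independence is chained through the recursion because the lower-index variables are measurable functions of $M^{[p-1]}$, which is independent of $(p_{p-1,1},p_{p-1,2})$.
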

    \begin{proof}
        For $1 \le i \le p - 1$ choose
        \begin{align*}
            p_{0, 1} &= \det M^{[1]} ~,~
            p_{i, 1} = \frac{\det M^{[i + 1]}}{\det M^{[i]}} ~,~
            p_{i, 2} = \frac{\det(I_{i + 1} - M^{[i + 1]})}{(1 - p_{i, 1})\det(I_i - M^{[i]})}~,
        \end{align*}
        then the identity (\ref{ident_det_jacobi}) obviously holds. The statements \eqref{distpropa}  and   \eqref{distpropb}  are  now proved by induction with respect to 
         the parameter $p$.    The claim is obviously correct for $p = 1$, since 
        \begin{align*}
            p_{0, 1} = \det M^{[1]} = M^{[1]} \sim J\beta E_1(\gamma, \delta) = \beta(\gamma, \delta) ~. 
        \end{align*}
        holds.
        
        Now  assume that \eqref{distpropa}  and   \eqref{distpropb} are satisfied  for $1, \ldots, p - 1$, then an application of Theorem~\ref{dist_jacobi}
        yields
        \begin{align*}
            M^{[p - 1]} &\sim J\beta E_{p - 1}(\gamma, \delta) ,\\
            p_{p - 1, 1} &\sim \beta(\gamma - (p - 1)/2, \delta) ,\\
            p_{p - 1, 2} &\sim \beta(\gamma - (p - 1)/2, (p - 1)/2),
        \end{align*}
      and these three random variables are independent. Since $M^{[p - 1]}\sim J\beta E_{p - 1}(\gamma, \delta)$  and 
        \begin{align*}
            (M^{[p]})^{[i]} = (M^{[p - 1]})^{[i]} ~~(  i \le p - 1)
        \end{align*}
                it follows from the induction hypothesis that
        \begin{align*}
            p_{i, 1} \sim \beta(\gamma - i/2, \delta) ~,~  i= 0 \ldots p - 2, \\
            p_{i, 2} \sim \beta(\delta - i/2, i/2)  ~,~  i= 1 \ldots p - 2,
        \end{align*}
        and these random variables are independent. It remains to show that $(p_{i, 1})_{i = 0}^{p - 2}$ and $(p_{i, 2})_{i = 1}^{p - 2}$ are also (jointly) independent of $(p_{p - 1, 1}, p_{p - 1, 2})$. This follows directly from the fact that $(p_{i, 1})_{i = 0}^{p - 2}$ and $(p_{i, 2})_{i = 1}^{p - 2}$ can be written as functions of $M^{[p - 1]}$, which is in turn independent of $(p_{p - 1, 1}, p_{p - 1, 2})$.
    \end{proof}
    
    \begin{rem} {\rm 
    A well-known result  in random matrix theory is the Bartlett decomposition [see  \cite{bart1933}], which states that in the Cholesky decomposition of a Wishart-distributed random matrix the entries are independent and normal resp. \hbox{$\chi^2$-distributed}. A corresponding result for the Jacobi-beta-ensemble was derived by \cite{kshi1961}
           and reads as follows. If $X \sim J\beta E_p(\gamma, \delta)$ has the (random) Cholesky decomposition $X = T^t T$ for some upper triangular matrix $T$, then the diagonal entries $t_{11}, \ldots, t_{pp}$ of $T$ are independent and their squares are beta-distributed, that is  $t_{ii}^2 \sim \beta(\gamma - (i - 1)/2, \delta)$. This result 
           is a special case of   Theorem~\ref{jacobi_subblock} that can be obtained for $a = 1$, $b = 0$. 

        To see this, denote by $K_i$ the $p \times i$-matrix with $(K_i)_{j, k} = \delta_{jk}$. Then the equation
        \begin{align*}
            X^{[i]} = K_i^t X K_i = K_i^t T^t K_i K_i^t T K_i = (K_i^tT K_i)^t K_i^tT K_i
        \end{align*}
        holds. Noting $\det(K_i^tT K_i) = t_{11} \cdot \ldots \cdot t_{ii}$ we can conclude that
        \begin{align*}
            t_{ii}^2 = \frac{\det(K_i^tT K_i)^2}{\det(K_{i - 1}^t T K_{i - 1})^2} = \frac{\det(X^{[i]})}{\det(X^{[i - 1]})} = p_{i - 1, 1}
        \end{align*}
        are independent random variables with $t_{ii}^2 \sim \beta(\gamma - (i-1)/2, \delta)$. 
        }
    \end{rem}

\section{Random Hankel determinant processes}  \label{section_applied_convergence}

    Let    $(M_{1, 2n + 1}^p, \ldots, M_{2n + 1, 2n + 1}^p)$ denote a uniformly distributed random vector on the $(2n + 1)$-th moment space $\mathcal{M}_{2n + 1}^{p_n}$
 and recall the definition of the stochastic process  $\{H_n (t) \}_{t \in [0,1]} $ in \eqref{basicproc}.
   From \eqref{produkt_h}, \eqref{dist_can_mom} and Theorem~\ref{jacobi_subblock} we obtain  the following representation 
    \begin{align*}
          H_n( t) ={}& \sum \limits_{i = 1}^{\lfloor nt \rfloor}\Big\{
                \sum \limits_{j = 1}^{p_n  - 1} \left(\log(r_{2n + 1, 2i, j}^{\lfloor nt\rfloor - i}) + \log(r_{2n + 1, 2i - 1, j}^{\lfloor nt\rfloor - i + 1}) \right) \\
                &+\sum \limits_{j = 0}^{p_n- 1}\log(p_{2n + 1, 2i, j}^{\lfloor nt\rfloor - i + 1}(1 - p_{2n + 1, 2i, j})^{\lfloor nt\rfloor - i}) \\
                &+\sum \limits_{j = 0}^{p_n  - 1}\log(p_{2n + 1, 2i - 1, j}^{\lfloor nt\rfloor - i + 1}(1 - p_{2n + 1, 2i - 1, j})^{\lfloor nt\rfloor - i + 1})
            \Big\}~,~
    \end{align*}
    where  the random variables $p_{2n + 1, i, j}$ and $r_{2n + 1, i, j}$ are independent and beta-distributed, that is 
    \begin{align}
        p_{2n + 1, i, j} &\sim \beta\Big (\frac{p_n + 1}{2}(2n - i + 2) - j/2, \frac{p_n + 1}{2}(2n - i + 2)\Big ) \label{beta_param_1} \\
        r_{2n + 1, i, j} &\sim \beta\Big (\frac{p_n + 1}{2}(2n - i + 2) - j/2, j/2\Big  ). \label{beta_param_2}
    \end{align}
    In the following discussion, we will consider a more general process. When viewing the Hankel-determinant as a function of the canonical moment matrices $\mathcal{U}_{i, 2n}^{p_n}$ [see equation~\eqref{produkt_h}], we can not only vary the dimension $n$ of the Hankel-matrix, but also the size $p_n$ of the canonical moment matrices in \eqref{produkt_h}. To this extent we introduce a new parameter $s \in (0, 1]$ and consider the upper left $\lfloor p_n s \rfloor \times \lfloor p_n s \rfloor$ subblocks of the canonical moment matrices. According to Theorem~\ref{jacobi_subblock}, the distribution of the logarithm of the corresponding Hankel-determinant can be written as
       \begin{align}
        \label{def_h}
        \begin{split}
          H_n(s, t) ={}&\sum \limits_{i = 1}^{\lfloor nt \rfloor}\Big\{
                \sum \limits_{j = 1}^{\lfloor p_n s \rfloor - 1} \left(\log(r_{2n + 1, 2i, j}^{\lfloor nt\rfloor - i}) + \log(r_{2n + 1, 2i - 1, j}^{\lfloor nt\rfloor - i + 1}) \right) \\
                &+\sum \limits_{j = 0}^{\lfloor p_ns \rfloor - 1}\log(p_{2n + 1, 2i, j}^{\lfloor nt\rfloor - i + 1}(1 - p_{2n + 1, 2i, j})^{\lfloor nt\rfloor - i}) \\
                &+\sum \limits_{j = 0}^{\lfloor p_n s \rfloor - 1}\log(p_{2n + 1, 2i - 1, j}^{\lfloor nt\rfloor - i + 1}(1 - p_{2n + 1, 2i - 1, j})^{\lfloor nt\rfloor - i + 1})
            \Big\}~,~
        \end{split}
    \end{align}
where we use the convention  $H_n(0, t) = 0$ and the process in  \eqref{basicproc} is obtained as $H_n( t) = H_n(1, t)  $. In the following discussion we will investigate the weak convergence of the process $\{H_n(s, t)\}_{s, t \in [0, 1]}$ as $n, p_n \to \infty$.

For this
purpose we state a general result with sufficient conditions for  the weak convergence of  a process of the form \eqref{def_h}, which might be of independent interest.       The proof can be found in the Appendix~\ref{sec6}.
        \begin{thm} \label{limit_thm}
       For each $n\in \N $   let  $T_n$  be a finite set, let $\{ X_n(i)~|~i \in T_n \}$ be real valued random variables  and let  $g_n: T_n \times [0, 1]^k 
       \to \mathbb{R}$ be a  real-valued function. Consider a process of the form
    \begin{align*}
        Z_n(t_1, \ldots, t_k) = \sum \limits_{i \in T_n} g_n(i, t_1, \ldots, t_k) X_n(i) 
            \end{align*}
 and suppose that the following assumptions are satisfied
    \begin{enumerate}[label=\textbf{(C\arabic*)}]
        \item The random variables $(X_n(i))_{i \in T_n}$ are independent. \label{cond_indep}
        \item $g_n$ is right-continuous in each of the last $k$ components. \label{cond_cont}
        \item $\e{X_{n}(i)} = 0$. \label{cond_center}
        \item $\e{X_{n}^{2k + 2}(i)} \le C \e{X_{n}^2(i)}^{k + 1} < \infty$ for some universal constant $C > 1$. \label{cond_moments}
        \item $\sup \limits_{i \in T_n} g_n^2(i, t) \var{X_n(i)} \xrightarrow{n \to \infty} 0$ for all fixed $t \in [0, 1]^k$. \label{cond_var_bound}
        \item There exists a function $f:[0, 1]^{2k} \to \mathbb{R}$ such that for all $s, t \in [0, 1]^k$ \label{cond_var_conv}
            \begin{align*}
                \cov{Z_n(s)}{Z_n(t)} = \sum \limits_{i \in T_n} g_n(i, s)g_n(i, t)\var{X_n(i)} \xrightarrow{n \to \infty} f(s, t). 
            \end{align*}
        \item There are sequences $h_n^{(j)} \xrightarrow{n \to \infty} 0$ for $1 \le j \le k$ such that for any two vectors $T = (t_1, \ldots, t_k)$ and $T' = (t_1, \ldots, t_j', \ldots, t_k)$ the inequality
            \begin{align*}
                \var{Z_n(T) - Z_n(T')} \le C(|t_j - t_j'| + h_n^{(j)})
            \end{align*}
            holds.\label{cond_cov}
        \item For any $t_j \le t_j' \le t_j'' \le t_j + h_n^{(j)}$ at least one of the equations
            \begin{align*}
                g_n(i, t_1, \ldots, t_j, \ldots, t_k) &= g_n(i, t_1, \ldots, t_j', \ldots, t_k)\\
                &\text{or}\\
                g_n(i, t_1, \ldots, t_j', \ldots, t_k) &= g_n(i, t_1, \ldots, t_j'', \ldots, t_k)
            \end{align*}
            holds. \label{cond_lattice}
    \end{enumerate}
  Then the process $Z_n$ converges weakly in $l^\infty([0, 1]^k)$ to a centered, continuous Gaussian process with covariance kernel $f$.
    \end{thm}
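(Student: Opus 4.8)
The statement is a functional central limit theorem, so I would follow the classical two-step programme for weak convergence in $l^\infty([0,1]^k)$: first establish convergence of the finite-dimensional distributions to those of the claimed Gaussian process, and then prove asymptotic tightness (equivalently, asymptotic equicontinuity with respect to the Euclidean metric on $[0,1]^k$). The limiting object itself is unproblematic: passing to the limit in \ref{cond_cov} gives $\var{Z_\infty(T)-Z_\infty(T')}\le C|t_j-t_j'|$, so the kernel $f$ has increments controlled at the Hölder-$1/2$ level in $L^2$ and hence, by a standard Kolmogorov--Chentsov / Dudley entropy argument, admits a sample-continuous Gaussian version, which is automatically tight in $l^\infty([0,1]^k)$. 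For the finite-dimensional laws, fix points $t^{(1)},\dots,t^{(m)}\in[0,1]^k$ and reals $\lambda_1,\dots,\lambda_m$ and apply the Cramér--Wold device to $W_n=\sum_\ell\lambda_\ell Z_n(t^{(\ell)})=\sum_{i\in T_n}a_n(i)X_n(i)$, where $a_n(i)=\sum_\ell\lambda_\ell g_n(i,t^{(\ell)})$.

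By \ref{cond_indep} and \ref{cond_center} the sum $W_n$ is a row-wise sum of independent, centered variables, so I would verify the Lyapunov condition with exponent $2k+2$. Using \ref{cond_moments}, $\e{|a_n(i)X_n(i)|^{2k+2}}=|a_n(i)|^{2k+2}\e{X_n^{2k+2}(i)}\le C\,(a_n(i)^2\var{X_n(i)})^{k+1}$; factoring out $\big(\sup_i a_n(i)^2\var{X_n(i)}\big)^{k}$, which tends to $0$ by \ref{cond_var_bound}, leaves $\sum_i a_n(i)^2\var{X_n(i)}=\var{W_n}$, which stays bounded and, by bilinearity and \ref{cond_var_conv}, converges to $\sum_{\ell,\ell'}\lambda_\ell\lambda_{\ell'}f(t^{(\ell)},t^{(\ell')})$. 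Hence the Lyapunov ratio vanishes, $W_n$ is asymptotically normal, and the finite-dimensional distributions converge to the centered Gaussian with covariance kernel $f$.

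The core of the tightness part is an increment moment bound of the right order. For $T,T'$ differing only in coordinate $j$, the increment $Z_n(T)-Z_n(T')=\sum_i(g_n(i,T)-g_n(i,T'))X_n(i)$ is again a sum of independent centered variables; Rosenthal's inequality together with \ref{cond_moments} and the elementary estimate $\sum_i x_i^{k+1}\le(\sum_i x_i)^{k+1}$ (for $x_i\ge 0$) bounds its $(2k+2)$-th moment by $C\,(\var{Z_n(T)-Z_n(T')})^{k+1}$, which by \ref{cond_cov} is at most $C(|t_j-t_j'|+h_n^{(j)})^{k+1}$. Telescoping one coordinate at a time and using Minkowski in $L^{2k+2}$ then yields, for a general increment, $\e{|Z_n(s)-Z_n(t)|^{2k+2}}\le C(\|s-t\|_1+\sum_j h_n^{(j)})^{k+1}$. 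Since the exponent $k+1$ strictly exceeds the dimension $k$, this is exactly the Kolmogorov--Chentsov regime in $[0,1]^k$, and this is precisely why the $(2k+2)$-th moment is imposed in \ref{cond_moments}.

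The remaining and most delicate step is to upgrade a grid-point moment bound to control of the \emph{supremum} over the continuum, since $Z_n$ is only a (step-valued) process. This is what the lattice condition \ref{cond_lattice} provides: combined with the right-continuity \ref{cond_cont}, it forces each $g_n(i,\cdot)$ to be a right-continuous step function with at most one jump inside any window of width $h_n^{(j)}$. Ordering the indices by the location of their jump inside such a window turns the within-window increment of $Z_n$ into a sequence of partial sums of independent centered terms, i.e.\ a martingale, so Doob's $L^{2k+2}$ maximal inequality bounds the window oscillation $\e{\sup|Z_n(\cdot)-Z_n(\text{left end})|^{2k+2}}$ by a constant times the endpoint increment moment, which is $O((h_n^{(j)})^{k+1})$. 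I would then pass to a grid of mesh $\max_j h_n^{(j)}$, bound the continuous modulus of continuity by the within-cell oscillations (via Doob) plus the grid increments (via a multidimensional chaining argument fed by the $(2k+2)$-moment bound above), and conclude asymptotic equicontinuity; together with the finite-dimensional convergence this gives weak convergence to the continuous Gaussian process with kernel $f$. The main obstacle is exactly this reduction from supremum to grid: one must rule out that the differently located jumps of the individual $g_n(i,\cdot)$ conspire to produce an oscillation exceeding what is already visible on the grid, and it is here that \ref{cond_lattice} is used in an essential, non-routine way.
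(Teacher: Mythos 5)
Your overall architecture coincides with the paper's: finite-dimensional convergence via Cram\'er--Wold plus a Lindeberg/Lyapunov verification resting on \ref{cond_moments} and \ref{cond_var_bound}; existence of a continuous Gaussian version of the limit via a Kolmogorov--Chentsov bound obtained by passing to the limit in \ref{cond_cov}; and a Rosenthal-type inequality turning \ref{cond_moments} and \ref{cond_cov} into $\e{|Z_n(T)-Z_n(T')|^{2k+2}}\le C\left(|t_j-t_j'|+h_n^{(j)}\right)^{k+1}$ and, by telescoping, into a bound with exponent $k+1>k$ for general increments. Up to this point you are reproducing the paper's argument.

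The gap is in the step you yourself single out as delicate: passing from grid increments to the supremum. The paper does this with a $k$-dimensional version of Billingsley's minimum-of-two-adjacent-increments device: it bounds $\p{\min\left(|Z_n(T)-Z_n(S)|,|Z_n(S)-Z_n(R)|\right)>\lambda}\le C\lambda^{-(2k+2)}|t-r|^{k+1}$, where \ref{cond_lattice} enters only to make one of the two increments vanish identically when $t-r<h_n^{(j)}$; note that this requires reading \ref{cond_lattice} as asserting that the \emph{same} alternative holds for all $i$ simultaneously (so that $Z_n(T)=Z_n(S)$ or $Z_n(S)=Z_n(R)$), whereas your per-index reading, with jump locations varying over $i$, would not yield this. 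Because the exponent $k+1$ exceeds $k$, the ensuing dyadic chaining is summable at \emph{every} scale and no separate within-cell argument is needed. Your alternative --- a grid of mesh $\max_j h_n^{(j)}$, Doob's maximal inequality for the within-cell oscillation, and a union bound over cells --- does not go through as stated. First, inside a cell of side $\max_j h_n^{(j)}$ a coordinate with smaller $h_n^{(j)}$ may have many jumps to which the same $X_n(i)$ contributes repeatedly, so the within-cell path is not the martingale you describe; the one-jump-per-window structure holds only on the anisotropic lattice of mesh $h_n^{(j)}$ in coordinate $j$. Second, and more seriously, the union bound over the $\prod_j \left(h_n^{(j)}\right)^{-1}$ cells of that anisotropic lattice, with per-cell oscillation moment of order $\left(\max_j h_n^{(j)}\right)^{k+1}$, is of order $\left(\max_j h_n^{(j)}\right)^{k+1}\prod_j \left(h_n^{(j)}\right)^{-1}$, which need not tend to zero when the $h_n^{(j)}$ have different orders of magnitude. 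This is exactly the regime in which the theorem is applied in the paper ($h_n^{(1)}=1/p_n$, $h_n^{(2)}=1/n$ with $p_n$ allowed to grow arbitrarily slowly): there the bound is $n/p_n^2$, which diverges for, say, $p_n=\log n$. To close this step you would need either the paper's minimum-of-increments chaining or a per-coordinate maximal inequality sharp enough to avoid the union bound over the fine coordinate.
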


    \begin{thm}\label{convergence_hn}
        If $p_n \xrightarrow{n \to \infty} \infty$, then
        \begin{align*}
            \{ \widetilde{H}_n(s, t) \}_{s,t \in [0,1]} := \frac{1}{\sqrt{n}} \big \{ H_n (s, t) - \mathbb{E}[H_n(s, t) ]\big  \}_{s,t \in [0,1]} \implies \{ \mathcal{G}(s, t) \}_{s,t \in [0,1]}
        \end{align*}
      in $l^\infty([0, 1]^2)$,  where  $\{ H_n(s, t) \}_{s,t \in [0,1]} $ is the process defined in (\ref{def_h})
                and $\mathcal{G}$ is a centered continuous Gaussian process with covariance kernel
        \begin{align*}
            \cov{\mathcal{G}(s_1, t_1)}{\mathcal{G}(s_2, t_2)} = \frac{(s_1 \wedge s_2)^2 c(t_1, t_2)}{2}.
        \end{align*}
        The function $c$ is given by
        \begin{align*}
            c(t_1, t_2) = \int_0^{t_1 \wedge t_2} \frac{(t_1 - x)(t_2 - x)}{(1 - x)^2} dx.
        \end{align*}
    \end{thm}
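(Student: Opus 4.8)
The plan is to apply Theorem~\ref{limit_thm} with $k = 2$ and $(t_1, t_2) = (s, t)$. First I would read off from the representation \eqref{def_h} together with Theorem~\ref{jacobi_subblock} that $\widetilde{H}_n(s, t) = \frac{1}{\sqrt{n}}\big(H_n(s,t) - \e{H_n(s,t)}\big)$ is exactly of the form $Z_n(s, t) = \sum_{\iota \in T_n} g_n(\iota, s, t) X_n(\iota)$. Here the index set $T_n$ enumerates the four families of beta variables $p_{2n+1, 2i, j}$, $p_{2n+1, 2i-1, j}$, $r_{2n+1, 2i, j}$, $r_{2n+1, 2i-1, j}$ (with $1 \le i \le n$ and $0 \le j \le p_n - 1$) having distributions \eqref{beta_param_1}--\eqref{beta_param_2}, the atoms $X_n(\iota)$ are the corresponding \emph{centered} logarithms $\log(\cdot) - \e{\log(\cdot)}$, and $g_n$ absorbs the integer exponents $\lfloor nt \rfloor - i$ and $\lfloor nt \rfloor - i + 1$, the indicators $I\{j \le \lfloor p_n s \rfloor - 1\}$ selecting the subblock, and the global factor $1/\sqrt{n}$.

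Conditions \ref{cond_indep}--\ref{cond_center} are then immediate: independence is the distributional statement accompanying \eqref{def_h}, centering holds by construction, and right-continuity of $g_n$ in $(s,t)$ follows because $g_n$ depends on $(s,t)$ only through the right-continuous step functions $\lfloor nt \rfloor$ and $\lfloor p_n s \rfloor$. The lattice condition \ref{cond_lattice} holds with $h_n^{(1)} = 1/p_n$ and $h_n^{(2)} = 1/n$: over an $s$-interval of length $1/p_n$ (resp. a $t$-interval of length $1/n$) the value of $\lfloor p_n s \rfloor$ (resp. $\lfloor nt \rfloor$) jumps at most once, so $g_n$ changes at most once and the required staircase alternative is satisfied; both mesh sizes tend to $0$ since $p_n \to \infty$. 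The moment condition \ref{cond_moments} and the uniform smallness \ref{cond_var_bound} reduce to estimates for centered logarithms of beta variables; writing $\var{\log B} = \psi'(a) - \psi'(a+b)$ for $B \sim \beta(a,b)$ and using $\psi'(x) = 1/x + O(1/x^2)$ one checks that each $g_n^2(\iota, \cdot)\var{X_n(\iota)}$ is of order $1/(p_n n)$, which vanishes because $p_n \to \infty$, while the sixth-moment bound in \ref{cond_moments} follows from the cumulant estimates for log-beta variables collected in the Appendix.

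The core of the argument is the covariance convergence \ref{cond_var_conv}. By independence, $\cov{Z_n(s_1,t_1)}{Z_n(s_2,t_2)}$ is a sum over the \emph{common} atoms, i.e. those with $i \le n(t_1 \wedge t_2)$ and $j \le p_n(s_1 \wedge s_2)$, which is the source of the minima in the limiting kernel. The dominant contribution comes from the two $r$-families: for $r_{2n+1, \cdot, j} \sim \beta(a, j/2)$ with $a + j/2 \approx \tfrac{p_n+1}{2}(2n - 2i + 2) \approx p_n n (1 - i/n)$ one has $\var{\log r} = \psi'(a) - \psi'(a + j/2) \approx \frac{j/2}{a(a + j/2)} \approx \frac{j/2}{(p_n n (1 - i/n))^2}$. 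Summing the factor $j/2$ over $j \le p_n(s_1 \wedge s_2)$ yields $\tfrac14 p_n^2 (s_1 \wedge s_2)^2$, which cancels the $p_n^2$ in the denominator and produces the factor $(s_1 \wedge s_2)^2$; writing $x = i/n$, multiplying by the two exponents $(\lfloor nt_1 \rfloor - i)(\lfloor nt_2 \rfloor - i) \approx n^2 (t_1 - x)(t_2 - x)$ and by the normalization $1/n$, the remaining sum over $i$ becomes a Riemann sum converging to $\tfrac14 (s_1 \wedge s_2)^2 \int_0^{t_1 \wedge t_2} \frac{(t_1 - x)(t_2 - x)}{(1 - x)^2}\, dx$. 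Each of the two $r$-families contributes this amount, giving the asserted $\tfrac12 (s_1 \wedge s_2)^2 c(t_1, t_2)$. It then remains to show the $p$-families are negligible: for $p \sim \beta(a, b)$ with $a - b = -j/2$ and $a \approx b \approx p_n n$ one finds $\var{\log(p(1-p))} = \frac{1}{a+b}\frac{(a-b)^2}{ab} + O((p_n n)^{-2}) = O\big(j^2 (p_n n)^{-3}\big) + O\big((p_n n)^{-2}\big)$, and since these log-variances govern the $p$-contributions up to bounded integer exponents, the resulting terms are $O(1/n) + O(1/p_n) \to 0$.

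The increment bound \ref{cond_cov} follows from the same variance estimates: shifting $t_2$ to $t_2'$ multiplies all coefficients for $i \le \lfloor nt_2 \rfloor$ by the constant $\lfloor nt_2' \rfloor - \lfloor nt_2 \rfloor$ and adds the few new terms with $\lfloor nt_2 \rfloor < i \le \lfloor nt_2' \rfloor$, and bounding the resulting variance gives $\le C\big(|t_2 - t_2'| + 1/n\big)$, with the analogous estimate for $s$. I expect the main obstacle to be making the trigamma asymptotics \emph{uniform} across the whole parameter range — in particular for the $r$-variables, whose second shape parameter $j/2$ can be as small as $1/2$ — and controlling the accumulated error terms in \ref{cond_var_conv} and \ref{cond_cov}; this is precisely what the delicate cumulant and polygamma estimates of the Appendix are designed to supply. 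Once all of \ref{cond_indep}--\ref{cond_lattice} are verified, Theorem~\ref{limit_thm} yields weak convergence in $l^\infty([0,1]^2)$ to the centered continuous Gaussian process with covariance kernel $f(s_1, t_1, s_2, t_2) = \tfrac12 (s_1 \wedge s_2)^2 c(t_1, t_2)$, which is the assertion.
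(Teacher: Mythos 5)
Your covariance arithmetic for the two $r$-families is correct and matches the paper's computation (each family contributes $\tfrac{1}{4}(s_1\wedge s_2)^2c(t_1,t_2)$), and your treatment of the lattice, moment and increment conditions is in the right spirit. However, there is a genuine gap at the very first step: the claimed one-shot representation $\widetilde H_n(s,t)=\sum_{\iota\in T_n}g_n(\iota,s,t)X_n(\iota)$ with \emph{independent} atoms $X_n(\iota)$ does not exist. The even-indexed $p$-family enters \eqref{def_h} through $\log\big(p_{2n+1,2i,j}^{\lfloor nt\rfloor-i+1}(1-p_{2n+1,2i,j})^{\lfloor nt\rfloor-i}\big)$, where the exponents of $p$ and of $1-p$ differ by one and both depend on $t$. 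This term cannot be written as a single $t$-dependent coefficient times a single $t$-free random variable (test $t$ with $\lfloor nt\rfloor-i=0$ against $\lfloor nt\rfloor-i=1$); any splitting into $t$-free atoms, e.g.\ $(\lfloor nt\rfloor-i)\log(p(1-p))+\log p$, produces two atoms that are functions of the \emph{same} beta variable and hence dependent. Condition \ref{cond_indep} of Theorem~\ref{limit_thm} therefore fails for your combined index set, and with it the Lindeberg argument, the identity $\operatorname{cov}(Z_n(s),Z_n(t))=\sum_i g_n(i,s)g_n(i,t)\operatorname{Var}(X_n(i))$ (you would pick up uncontrolled cross-covariances between $\log p$ and $\log(p(1-p))$), and the Rosenthal-based tightness bound.

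The paper circumvents exactly this by decomposing $\widetilde H_n=A_n+B_n+C_n+D_n+E_n$, where $A_n,B_n$ collect the two $r$-families, $C_n,D_n$ collect the $\log(p(1-p))$ parts with coefficients $\lfloor nt\rfloor-i$ resp.\ $\lfloor nt\rfloor-i+1$, and $E_n$ collects the leftover $\log p$ terms; within each summand the atoms are genuinely independent, so Theorem~\ref{limit_thm} applies five times, giving $A_n\Rightarrow\mathcal G'$, $B_n\Rightarrow\mathcal G''$ (an independent copy) and $C_n,D_n,E_n\Rightarrow 0$. Since $C_n,D_n,E_n$ converge to the deterministic zero process, their dependence on each other and on nothing else matters, and Slutsky's lemma together with joint convergence of the independent pair $(A_n,B_n)$ yields the limit $\mathcal G'+\mathcal G''\stackrel{d}{=}\mathcal G$. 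Your estimates would all survive this reorganization essentially unchanged, but without some such decomposition the appeal to Theorem~\ref{limit_thm} is not licensed.
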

    \begin{proof}
        We use the decomposition 
                \begin{align*}
            \widetilde{H}_n = A_n + B_n + C_n + D_n + E_n
        \end{align*}
 of  the process $\widetilde{H}_n$,       where the processes on the right-hand side are defined by 
        \begin{align*}
            A_n ={}& \frac{1}{\sqrt{n}}\sum \limits_{i = 1}^{\lfloor nt \rfloor - 1}\sum \limits_{j = 1}^{\lfloor p_ns \rfloor - 1}(\lfloor nt \rfloor - i)\left(\log(r_{2n + 1, 2i, j}) - \e{\log(r_{2n + 1, 2i, j})}\right) \\
            B_n ={}& \frac{1}{\sqrt{n}}\sum \limits_{i = 1}^{\lfloor nt \rfloor}\sum \limits_{j = 1}^{\lfloor p_ns \rfloor - 1}(\lfloor nt \rfloor - i + 1)\left(\log(r_{2n + 1, 2i - 1, j}) - \e{\log(r_{2n + 1, 2i - 1, j})}\right) \\
            C_n ={}& \frac{1}{\sqrt{n}}\sum \limits_{i = 1}^{\lfloor nt \rfloor - 1}\sum \limits_{j = 0}^{\lfloor p_ns \rfloor - 1} (\lfloor nt \rfloor - i)\big\{\log(p_{2n + 1, 2i, j}(1 - p_{2n + 1, 2i, j}))\\
            &\qquad- \e{\log(p_{2n + 1, 2i, j}(1 - p_{2n + 1, 2i, j}))}\big\} \\
            D_n ={}& \frac{1}{\sqrt{n}}\sum \limits_{i = 1}^{\lfloor nt \rfloor}\sum \limits_{j = 0}^{\lfloor p_ns \rfloor - 1} (\lfloor nt \rfloor - i + 1)\big\{\log(p_{2n + 1, 2i - 1, j}(1 - p_{2n + 1, 2i - 1, j})) \\
            &\qquad- \e{\log(p_{2n + 1, 2i - 1, j}(1 - p_{2n + 1, 2i - 1, j}))}\big\} \\
            E_n ={}&\frac{1}{\sqrt{n}} \sum \limits_{i = 1}^{\lfloor nt \rfloor}\sum \limits_{j = 0}^{\lfloor p_ns \rfloor - 1} \left(\log(p_{2n + 1, 2i, j}) 
            - \e{\log(p_{2n + 1, 2i, j})}\right)
        \end{align*}
   and the random variables $p_{2n+1,i,j} , r_{2n+1,i,j}$ are independent and beta-distributed, as specified in \eqref{beta_param_1} and \eqref{beta_param_2} respectively. We will now apply Theorem \ref{limit_thm} to each of these processes to prove
        \begin{align}
            A_n &\implies \mathcal{G}'   \label{an}\\
            B_n &\implies \mathcal{G}'  \label{bn} \\
            C_n &\implies 0  \label{cn}  \\
            D_n &\implies 0   \label{dn}\\
            E_n &\implies 0,  \label{en}
        \end{align}
        where $\mathcal{G}'$ is a continuous centered Gaussian process with covariance kernel
        \begin{align*}
            \cov{\mathcal{G}'(s_1, t_1)}{\mathcal{G}'(s_2, t_2)} = \frac{(s_1 \wedge s_2)^2 c(t_1, t_2)}{4}.
        \end{align*}
        Since $A_n$ and $B_n$ are independent  it follows that $                   (A_n, B_n) \implies (\mathcal{G}', \mathcal{G}'') $, where $\mathcal{G}''$ is an 
        independent copy of $\mathcal{G}'$ (c.f. Example 1.4.6 in \cite{vanwell1996}),  and the continuous mapping theorem 
        implies $A_n + B_n \implies \mathcal{G}' + \mathcal{G}'' \stackrel{d}{=} \mathcal{G}$. The assertion of Theorem \ref{convergence_hn}  now follows from 
         Slutsky's lemma.  
        We will omit the proof of \eqref{bn}  and \eqref{dn} because  the arguments are similar as in the proof of 
    \eqref{an}  and \eqref{cn}, respectively.  \br 
        
     {   \bf Proof of \eqref{an}:}     We can represent the process  $A_n$ in a  form such that  Theorem \ref{limit_thm} is aplicable, that is 
        \begin{align} \label{hol1}
            A_n(s, t) = \sum \limits_{(i,j)  \in T_n} g_n((i,j) , s, t) X_n((i,j) )~, 
        \end{align}
        where
        \begin{align}
            T_n &= \{1, \ldots, n - 1\} \times \{1, \ldots, p_n - 1\}  \nonumber \\
            g_n((i, j), s, t) &= \frac{1}{\sqrt{n}} (\lfloor nt \rfloor - i) I\{i \le \lfloor nt \rfloor - 1\} I\{j \le \lfloor p_n s\rfloor - 1\} 
            \nonumber \\
            X_n((i, j)) &= \log(r_{2n + 1, 2i, j}) - \e{\log(r_{2n + 1, 2i, j})} \label{hol2}
        \end{align}
        It is obvious that $A_n$ satisfies the conditions \ref{cond_indep}, \ref{cond_cont} and \ref{cond_center}. Condition \ref{cond_moments} is proved in Theorem \ref{mom_high} in Appendix~\ref{betamom} (note that the parameters of the distribution of $r_{2n + 1, i, j}$ are bounded from below by $\tfrac{1}{2}$). 
        
        By \eqref{form_kappa1} from Appendix~\ref{betamom} the variance of the logarithm of a beta distributed  random variable $X \sim \beta(a, b)$ can be calculated as
        \begin{align*}
            \var{\log X} = \psi_1(a) - \psi_1(a + b),
        \end{align*}
        where
        \begin{equation} \label{polgma}
        \psi_k(x) = \frac{d^{k + 1}}{dx^{k + 1}} \log \Gamma(x)
        \end{equation}
denotes the Polygamma function. 
     An application of formula (\ref{bound_psi_diff}) from Appendix \ref{appendix_polygamma}  shows  that
        \begin{align*}
            &g_n^2((i, j), s, t) \var{X_n(i, j)} \\
            \le{}&  \frac{3 (n - i)^2j/2}{n \left(\frac{p_n + 1}{2}(2n - 2i + 2) - j/2 \right)\frac{p_n + 1}{2}(2n - 2i + 2)} \\
            \le{}&   \frac{3(n - i)^2j/2}{n \left((p_n + 1)(n - i + 1/2)  + (p_n + 1 - j)/2 \right)( p_n + 1) (n - i + 1)} \\
            \le{}& \frac{3j/2}{n \left(p_n + 1\right)^2} \le  \frac{3}{n \left(p_n + 1\right)} \xrightarrow{n \to \infty} 0~,
        \end{align*}
   and therefore  condition \ref{cond_var_bound} is also satisfied.   
  To see that condition \ref{cond_var_conv} holds,    define for  $s_1, s_2, t_1, t_2 \in [0, 1]$  the minima  $s = s_1 \wedge s_2$ and $t = t_1 \wedge t_2$. By (\ref{var_first_order}) we have a decomposition 
        \begin{align*}
            \cov{A_n(s_1, t_1)}{A_n(s_2, t_2)} = S_n + R_n,
        \end{align*}
        where
        \begin{align*}
            S_n = \sum \limits_{i = 1}^{\lfloor nt \rfloor - 1} \sum \limits_{j = 1}^{\lfloor p_n s\rfloor - 1} \frac{1}{n}\frac{(\lfloor nt_1\rfloor - i) (\lfloor nt_2\rfloor - i) j/2}{\left(\frac{p_n + 1}{2} (2n - 2i + 2) - j/2\right) \frac{p_n + 1}{2} (2n - 2i + 2)},
        \end{align*}
        and the remainder $R_n$ satisfies the inequality
        \begin{align*}
            |R_n| \le \sum \limits_{i = 1}^{\lfloor nt \rfloor - 1} \sum \limits_{j = 1}^{\lfloor p_n s \rfloor - 1} \frac{4}{n} \frac{(\lfloor nt_2 \rfloor - i)(\lfloor nt_2 \rfloor - i)}{\left(\frac{p_n + 1}{2}(2n - 2i + 2) - j/2\right)^2}.
        \end{align*}
        Now note that
        \begin{align*}
            S_n ={}& \sum \limits_{i = 1}^{\lfloor nt \rfloor - 1} \sum \limits_{j = 1}^{\lfloor p_n s\rfloor - 1} \frac{1}{n}\frac{(\lfloor nt_1\rfloor - i) (\lfloor nt_2\rfloor - i) j/2}{\left(\frac{p_n + 1}{2} (2n - 2i + 2) - j/2\right) \frac{p_n + 1}{2} (2n - 2i + 2)} \\
            \ge{}&\sum \limits_{i = 1}^{\lfloor nt \rfloor - 1} \sum \limits_{j = 1}^{\lfloor p_n s\rfloor - 1} \frac{1}{n}\frac{(nt_1 - i - 1) (nt_2 - i - 1)j/2}{\left(\frac{p_n + 1}{2} (2n - 2i + 2)\right)^2} \\
            ={}&\frac{1}{n}
            \sum \limits_{j = 1}^{\lfloor p_n s \rfloor - 1} \frac{j}{2(p_n + 1)^2}   \sum \limits_{i = 1}^{\lfloor nt \rfloor - 1}\frac{(nt_1 - i - 1) (nt_2 - i - 1)}{(n - i + 1)^2} \\
            ={}& \frac{1}{4} (s^2 + O(p_n^{-1}))\left(c(t_1, t_2) + O(n^{-1})\right) \xrightarrow{n \to \infty} \frac{(s_1 \wedge s_2)^2 c(t_1, t_2)}{4} .
        \end{align*}
Moreover, 
        \begin{align*}
            S_n ={}&\sum \limits_{i = 1}^{\lfloor nt\rfloor - 1} \sum \limits_{j = 1}^{\lfloor p_n s \rfloor - 1}\frac{1}{n} \frac{(\lfloor nt_1\rfloor - i) (\lfloor nt_2\rfloor - i) j/2}{\left(\frac{p_n + 1}{2} (2n - 2i + 2) - j/2\right) \frac{p_n + 1}{2} (2n - 2i + 2)} \\
            \le{}&\sum \limits_{i = 1}^{\lfloor nt \rfloor - 1} \sum \limits_{j = 1}^{\lfloor p_n s \rfloor - 1} \frac{1}{n}\frac{(nt_1 - i) (nt_2 - i)j/2}{\left(\frac{p_n + 1}{2} (2n - 2i)\right)^2} \\
            ={}& \Big (\frac{s^2}{4} + O(p_n^{-1})\Big )\left(c(t_1, t_2) + O(n^{-1})\right) \xrightarrow{n \to \infty} \frac{(s_1 \wedge s_2)^2 c(t_1, t_2)}{4}~,
        \end{align*}
       which gives 
        \begin{align*}
            S_n \xrightarrow{n \to \infty} \frac{(s_1 \wedge s_2)^2 c(t_1, t_2)}{4}.
        \end{align*}
Finally, the remainder $R_n$ vanishes asymptotically, that is
        \begin{align*}
            |R_n| \le{}&\sum \limits_{i = 1}^{\lfloor nt \rfloor - 1} \sum \limits_{j = 1}^{\lfloor p_n s \rfloor - 1} \frac{4}{n} \frac{(\lfloor nt_2 \rfloor - i)(\lfloor nt_2 \rfloor - i)}{\left(\frac{p_n + 1}{2}(2n - 2i + 2) - j/2\right)^2} \\
            \le{}& \sum \limits_{i = 1}^{\lfloor nt \rfloor - 1} \sum \limits_{j = 1}^{\lfloor p_n s \rfloor - 1}\frac{4}{n} \frac{(n - i)^2}{((p_n + 1)(n - i + 1/2))^2} 
            \le{} \frac{4}{p_n} \xrightarrow{n \to \infty} 0,
        \end{align*}       
        which proves condition \ref{cond_var_conv}. 
        To show   \ref{cond_cov} note that
        \begin{align}
             \var{\log r_{2n, 2i, j}} 
            \le{}  5 \frac{j/2}{\left(\frac{p_n + 1}{2}(2n - 2i + 2) - j/2\right)^2} 
            \le{}& \frac{5j}{((p_n + 1)(n - i))^2} \label{u_bnd_1}
        \end{align}
        holds by the upper bound (\ref{bound_psi_diff}) in Appendix \ref{appendix_polygamma}.     Assume now that $0 \le t_1 < t_2 \le 1$ and $0 \le s \le 1$ are real numbers. Then for $X = (s, t_1)$ and $Y = (s, t_2)$ 
        we have         
        \begin{align*}
            \var{Z_n(Y) - Z_n(X)}
                %
            \le{}& \sum \limits_{i = 1}^{\lfloor nt_1 \rfloor - 1} \sum \limits_{j = 1}^{\lfloor p_n s\rfloor - 1} \frac{5}{n}(\lfloor nt_2 \rfloor - \lfloor nt_1 \rfloor)^2\frac{j}{(p_n + 1)^2 (n - i)^2} \\
            &+ \sum \limits_{i = \lfloor nt_1 \rfloor}^{\lfloor n t_2\rfloor - 1} \sum \limits_{j = 1}^{\lfloor p_n s\rfloor - 1} \frac{5}{n}(\lfloor nt_2 \rfloor - i)^2 \frac{j}{(p_n + 1)^2 (n - i)^2} \\
            \le{}& \sum \limits_{i = 1}^{\lfloor nt_1 \rfloor - 1} \frac{5}{n}(\lfloor nt_2 \rfloor - \lfloor nt_1 \rfloor)^2 \frac{1}{(n - i)^2}
            + \sum \limits_{i = \lfloor nt_1 \rfloor}^{\lfloor n t_2\rfloor - 1} \frac{5}{n}(\lfloor nt_2 \rfloor - i)^2 \frac{1}{(n - i)^2} \\
            \le{}& \frac{(\lfloor nt_2 \rfloor - \lfloor nt_1 \rfloor)^2}{n^2}\sum \limits_{i = 1}^{\lfloor n t_1 \rfloor - 1} \frac{5}{n}\frac{1}{(1 - i/n)^2} + \sum \limits_{i = \lfloor nt_1 \rfloor}^{\lfloor n t_2\rfloor - 1} \frac{5}{n} \\
                %
                %
                %
            \le{}& 10 \Big (t_2 - t_1 + \frac{1}{n}\Big ).
        \end{align*}
        For the increments in the second coordinate, let $0 \le s_1 < s_2 \le 1$ and $0 \le t \le 1$ be real numbers and set $X = (s_1, t)$, $Y = (s_2, t)$, then
        \begin{align*}
            \var{Z_n(Y) - Z_n(X)}
            & =\sum \limits_{i = 1}^{\lfloor n t\rfloor - 1} \sum \limits_{j = \lfloor p_n s_1 \rfloor}^{\lfloor p_n s_2\rfloor - 1} \frac{1}{n}(\lfloor nt \rfloor - i)^2\var{\log r_{2n, 2i, j}}\\
         &    \le{}\sum \limits_{i = 1}^{\lfloor n t\rfloor - i} \sum \limits_{j = \lfloor p_n s_1 \rfloor}^{\lfloor p_n s_2\rfloor - 1} \frac{5}{n}\frac{j}{(p_n + 1)^2}
            \le \frac{5}{p_n + 1}\sum\limits_{j = \lfloor p_n s_1 \rfloor}^{\lfloor p_n s_2\rfloor - 1} 1\\
          &  \le{} 5\Big (s_2 - s_1 + \frac{1}{p_n}\Big ).
        \end{align*}
        Therefore condition  \ref{cond_cov} is also satisfied with $h_n^{(1)} = \frac{1}{p_n}$ and $h_n^{(2)} = \frac{1}{n}$. It is obvious that \ref{cond_lattice} holds for these sequences and the assertion \eqref{an}  follows from Theorem \ref{limit_thm}. \br

    {\bf Proof of    \eqref{cn}:}
    The process     $C_n$ can be decomposed in a similar way as $A_n$ in \eqref{hol1}, where the random variables
    $X_n((i,j) )$  in \eqref{hol2} are now defined by
   \begin{align*}
        X_n((i,j) ) = \log\big(p_{2n + 1, 2i, j}(1 - p_{2n + 1, 2i, j})\big) - \e{\log\big(p_{2n + 1, 2i, j}(1 - p_{2n + 1, 2i, j})\big)}.
   \end{align*}
   
   It is again obvious that  \ref{cond_indep}, \ref{cond_cont} and \ref{cond_center} hold. Condition \ref{cond_moments} 
is a consequence of Theorem \ref{mom_high_2} in Appendix \ref{betamom}. 
        
        For a  proof of   \ref{cond_var_bound} note that  by  formula (\ref{form_kappa2})  in Appendix \ref{betamom} 
        the variance of $\log(X(1-X))$ for a    random variable $X \sim \beta(a, b)$  can be calculated as
        \begin{align*}
            \var{\log(X(1-X))} = \psi_1(a) + \psi_1(b) - 4 \psi_1(a + b)~. 
        \end{align*}
An application of (\ref{bnd_var}) now  gives 
        \begin{align}
            \var{\log(p_{2n + 1, 2i, j}(1 - p_{2n + 1, 2i, j}))} 
            \le{}& \Big(6 + \frac{(j/2)^2}{\left(\frac{p_n + 1}{2}(2n - 2i + 2) - \frac{j}{2}\right)}\Big) \frac{1}{\left(\frac{p_n + 1}{2}(2n - 2i + 2)\right)^2} \nonumber\\
            \le{}& \Big (6 + \frac{j^2}{(p_n + 1)(n - i)}\Big ) \frac{1}{((p_n + 1)(n - i))^2} \nonumber\\
            ={}& \frac{6}{((p_n + 1)(n - i))^2} + \frac{j^2}{((p_n + 1)(n - i))^3} \nonumber\\
            \le{}& \frac{6(j + 1)}{((p_n + 1)(n - i))^2}, \label{u_bnd_2}
        \end{align}
    and condition        \ref{cond_var_bound} follows from the inequality        
        \begin{align*}
            \frac{1}{n} (\lfloor nt\rfloor - i)^2\var{\log(p_{2n + 1, 2i, j}(1 - p_{2n + 1, 2i, j})}    \le{}
            \frac{1}{n} (\lfloor nt\rfloor - i)^2 \frac{6(j + 1)}{((p_n + 1)(n - i))^2} \le{}\frac{6}{np_n} .
        \end{align*}
    For a proof of assumption     \ref{cond_var_conv} note that 
        \begin{align*}
            \var{C_n(s, t)} 
                %
            \le{}& \sum \limits_{i = 1}^{n - 1} \sum \limits_{j = 0}^{p_n - 1}\frac{1}{n} \Big (6 + \frac{j^2}{((p_n + 1)(n - i))^2}\Big ) \frac{1}{(p_n + 1)^2} \\
            \le{}& \frac{6}{p_n + 1} + \sum \limits_{i = 1}^{n - 1} \sum \limits_{j = 0}^{p_n - 1}\frac{1}{n} \frac{1}{(p_n + 1)i} \frac{j^2}{(p_n+ 1)^2} \\
            \le{}& \frac{6}{p_n + 1} + \frac{\log(n) + 1}{n} \xrightarrow{n \to \infty} 0.
        \end{align*}
        By the Cauchy-Schwarz-inequality this implies
        \begin{align*}
            \cov{C_n(s_1, t_1)}{C_n(s_2, t_2)} \xrightarrow{n \to \infty} 0.
        \end{align*}
   Assumption          \ref{cond_cov} follows by similar calculations as given for the proof of \eqref{an} (note the similarity between (\ref{u_bnd_1}) and (\ref{u_bnd_2})). More specifically, the inequalities 
        \begin{align*}
            \var{C_n(s, t_1) - C_n(s, t_2)} &\le 24 \Big (|t_2 - t_1| + \frac{1}{n}\Big ) \\
            \var{C_n(s_1, t) - C_n(s_2, t)} &\le 12 \Big( |s_2 - s_1| + \frac{1}{p_n}\Big )
        \end{align*}
        hold for any real numbers $s, t \in [0, 1]$, $0 \le s_1 < s_2 \le 1$ and $0 \le t_1 \le t_2 \le 1$. Finally 
        condition    \ref{cond_lattice} is obvious with $h_n^{(1)} = \frac{1}{p_n}$, $h_n^{(2)} = \frac{1}{n}$  and the assertion \eqref{cn} follows 
        from Theorem  \ref{limit_thm}. \br 
        
      {\bf Proof of \eqref{en}:}   We use again Theorem  \ref{limit_thm} to prove the assertion.
       Conditions \ref{cond_indep} -- \ref{cond_moments} hold by similar arguments as in the proof of $A_n$. 
     Observing the inequality    
        \begin{align}
            \var{\log(p_{2n + 1, 2i, j}) } 
            \le{}& 5 \frac{\frac{p_n + 1}{2}(2n - 2i + 2)}{\left(\frac{p_n + 1}{2}(2n - 2i + 2) - j/2\right)\left((p_n + 1)(2n - 2i + 2) - j/2\right)}\nonumber\\
            \le{}& 5 \frac{(p_n + 1)(n - i + 1)}{\left((p_n + 1)(n - i + 1/2)\right)\left((p_n + 1)(2n - 2i + 1)\right)} \nonumber\\
            \le{}& 5 \frac{n - i + 1}{(p_n + 1)(n - i + 1/2)^2} \le \frac{5}{(p_n + 1)(n - i + 1/2)} \label{bnd_rem}
        \end{align}
   (which follows from      (\ref{bound_psi_diff})) condition   \ref{cond_var_bound} is obviously satisfied.   
        To prove \ref{cond_var_conv}, note that it follows from (\ref{bnd_rem})
        \begin{align*}
            \var{E_n(s, t)} \le \frac{1}{n}\sum \limits_{i = 1}^{n} \sum \limits_{j = 0}^{p_n - 1} \frac{5}{(p_n + 1)(n - i + 1/2)} = O\left(\frac{\log(n)}{n}\right) \xrightarrow{n \to \infty} 0.
        \end{align*}
        Again, by the Cauchy-Schwarz-inequality, this implies
        \begin{align*}
            \cov{E_n(s_1, t_1)}{E_n(s_2, t_2)} \xrightarrow{n \to \infty} 0.
        \end{align*} 
For a proof of      \ref{cond_cov} let $0 \le s_1 < s_2 \le 1$ and $0 \le t \le 1$, then we obtain for 
$X = (s_1, t)$ and $Y = (s_2, t)$ from   (\ref{bnd_rem}) the estimate
        \begin{align*}
            \var{E_n(X) - E_n(Y)} \le \frac{1}{n} \sum \limits_{i = 1}^{\lfloor nt \rfloor}\sum \limits_{j = \lfloor p_n s_1\rfloor}^{\lfloor p_n s_2 \rfloor - 1}\frac{5}{(p_n + 1)(n - i + 1/2)} \le 10 \Big(s_2 - s_1 + \frac{1}{p_n}\big )~,
        \end{align*}
        and similarly for  $0 \le t_1 < t_2 \le 1$, $0 \le s \le 1$,  $X = (s, t_1)$ and $Y = (s, t_2)$ 
        \begin{align*}
            \var{E_n(X) - E_n(Y)} \le \frac{1}{n} \sum \limits_{i = \lfloor nt_1\rfloor + 1}^{\lfloor nt_2 \rfloor}\sum \limits_{j = 0}^{\lfloor p_n s \rfloor - 1}\frac{5}{(p_n + 1)(n - i + 1/2)} \le 10 \Big (t_2 - t_1 + \frac{1}{n}\Big ).
        \end{align*}
      Finally  assumption    \ref{cond_lattice} is obvious with $h_n^{(1)} = \frac{1}{p_n}$ and $h_n^{(2)} = \frac{1}{n}$.
    \end{proof}

    \begin{cor}
        If $p_n \xrightarrow{n \to \infty} \infty$, then
        \begin{align*}
            \{ \widetilde{H}_n( t) \}_{t \in [0,1]} := \frac{1}{\sqrt{n}} \big \{ H_n ( t) - \mathbb{E}[H_n(t) ]\big  \}_{t \in [0,1]} \implies \{ \mathcal{G} (t) \}_{t \in [0,1]}  ,
        \end{align*}
       in $l^\infty([0, 1])$   where $\{ H_n( t) \}_{t \in [0,1]} $ is the process defined in (\ref{basicproc}) 
        and $\{ \mathcal{G}( t) \}_{t \in [0,1]} $ is a centered continuous Gaussian process with covariance kernel
        \begin{align*}
            \cov{\mathcal{G}( t_1)}{\mathcal{G}( t_2)} = \frac{1}{2}\int_0^{t_1 \wedge t_2} \frac{(t_1 - x)(t_2 - x)}{(1 - x)^2} dx.
        \end{align*}
    \end{cor}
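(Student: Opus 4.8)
The plan is to obtain this statement directly from Theorem~\ref{convergence_hn} by specializing the spatial parameter $s$ to the value $1$. First I would recall from the construction in \eqref{def_h} that the one-parameter Hankel process of \eqref{basicproc} is the restriction $H_n(t) = H_n(1,t)$, so that $\widetilde H_n(t) = \widetilde H_n(1,t)$ for every $t \in [0,1]$. Thus the corollary is nothing but the image of the two-parameter weak convergence established in Theorem~\ref{convergence_hn} under evaluation at $s=1$.

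Next I would introduce the evaluation map $\pi : l^\infty([0,1]^2) \to l^\infty([0,1])$ defined by $(\pi f)(t) = f(1,t)$. This map is linear and satisfies $\|\pi f\|_\infty = \sup_{t \in [0,1]} |f(1,t)| \le \sup_{(s,t) \in [0,1]^2} |f(s,t)| = \|f\|_\infty$, hence it is $1$-Lipschitz and in particular continuous. Since Theorem~\ref{convergence_hn} gives $\widetilde H_n \implies \mathcal{G}$ in $l^\infty([0,1]^2)$, the continuous mapping theorem yields $\pi \widetilde H_n \implies \pi \mathcal{G}$ in $l^\infty([0,1])$, that is $\widetilde H_n(1,\cdot) \implies \mathcal{G}(1,\cdot)$.

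It then remains to identify the limit $\mathcal{G}(1,\cdot)$ with the process described in the statement. Being the restriction of the centered continuous Gaussian process $\mathcal{G}$ to the line $\{s=1\}$, the process $\{\mathcal{G}(1,t)\}_{t\in[0,1]}$ is again centered, continuous and Gaussian, and its covariance is obtained by setting $s_1=s_2=1$ in the kernel of Theorem~\ref{convergence_hn}:
\[
\cov{\mathcal{G}(1,t_1)}{\mathcal{G}(1,t_2)} = \frac{(1 \wedge 1)^2\, c(t_1,t_2)}{2} = \frac{1}{2}\int_0^{t_1 \wedge t_2} \frac{(t_1-x)(t_2-x)}{(1-x)^2}\,dx,
\]
which is exactly the claimed kernel. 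I expect no real obstacle in this argument; the single point that requires (routine) justification is the continuity of the evaluation map $\pi$, which is immediate from the definition of the supremum norm.
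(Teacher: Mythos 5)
Your argument is correct and is exactly the route the paper intends: the corollary is stated without proof precisely because it follows from Theorem~\ref{convergence_hn} by setting $s=1$ (using $H_n(t)=H_n(1,t)$) together with the continuity of the evaluation map on $l^\infty([0,1]^2)$. Your identification of the covariance kernel by substituting $s_1=s_2=1$ is also correct.
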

    
    \medskip
We conclude with a Glivenko-Cantelli type Theorem.  \medskip
    
    \begin{thm} \label{conv_as}
        If $p_n \xrightarrow{n \to \infty} \infty$, we have for any $\varepsilon > 0$
        \begin{align*}
            n^{-\varepsilon} \|\widetilde H_n\| \xrightarrow{a.s.} 0.
        \end{align*}
    \end{thm}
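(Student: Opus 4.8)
The plan is to upgrade the convergence in distribution of Theorem~\ref{convergence_hn} to an almost sure statement by a Borel--Cantelli argument based on high moments of the supremum $\|\widetilde H_n\| = \sup_{s,t\in[0,1]}|\widetilde H_n(s,t)|$. The first observation is that, since $\widetilde H_n(s,t)$ depends on $(s,t)$ only through the integers $\lfloor nt\rfloor\in\{0,\dots,n\}$ and $\lfloor p_n s\rfloor\in\{0,\dots,p_n\}$, the supremum is in fact a maximum over the finite grid of at most $(n+1)(p_n+1)$ distinct values, attained for instance at the points $(b/p_n,a/n)$. A naive union bound over this grid is useless, because $p_n$ may grow arbitrarily fast; the structural point that saves the argument is that, \emph{for fixed $t$}, the process $s\mapsto \widetilde H_n(s,t)$ is a partial sum in $j=\lfloor p_n s\rfloor$ of increments that are independent and centred across $j$ (the beta variables $r_{2n+1,i,j}$ and $p_{2n+1,i,j}$ carry distinct indices for distinct $j$). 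Hence $b\mapsto \widetilde H_n(b/p_n,t)$ is a mean-zero martingale, and Doob's $L^{2q}$ maximal inequality controls the maximum in the $s$-direction at the cost of a universal constant, reducing the problem to the endpoint $\widetilde H_n(1,t)$ without paying any factor depending on $p_n$.

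Concretely, I would fix an even integer $2q$ and write, using Doob's inequality for each fixed $a=\lfloor nt\rfloor$ followed by a crude union bound over the at most $n+1$ admissible values of $a$,
\begin{align*}
\e{\|\widetilde H_n\|^{2q}} \le \sum_{a=0}^{n}\e{\max_{0\le b\le p_n}\big|\widetilde H_n(b/p_n,a/n)\big|^{2q}} \le \Big(\tfrac{2q}{2q-1}\Big)^{2q}\sum_{a=0}^{n}\e{\big|\widetilde H_n(1,a/n)\big|^{2q}}.
\end{align*}
It therefore suffices to bound the endpoint moments $\e{|\widetilde H_n(1,t)|^{2q}}$ by a constant $C_q$ uniform in $n$ and $t$, which gives $\e{\|\widetilde H_n\|^{2q}}\le (n+1)C_q'$. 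Granting this, Markov's inequality yields $\p{n^{-\varepsilon}\|\widetilde H_n\|>\delta}\le (n+1)C_q'\,\delta^{-2q}n^{-2q\varepsilon}$, and choosing $q>1/\varepsilon$ makes the exponent $1-2q\varepsilon$ strictly less than $-1$, so the series over $n$ converges; Borel--Cantelli together with the arbitrariness of $\delta>0$ then gives $n^{-\varepsilon}\|\widetilde H_n\|\to 0$ almost surely.

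The main obstacle is the uniform moment bound $\sup_{n,t}\e{|\widetilde H_n(1,t)|^{2q}}\le C_q$ for \emph{every} order $q$, since Borel--Cantelli forces $q$ to grow as $\varepsilon\to 0$ while the hypotheses of Theorem~\ref{limit_thm} only control moments up to order $2k+2=6$ (condition~\ref{cond_moments}). To reach all orders I would write $\widetilde H_n(1,t)=\tfrac{1}{\sqrt n}\sum_k Y_k$ as a normalised sum of the independent centred log-beta contributions appearing in $A_n,\dots,E_n$ and apply Rosenthal's inequality, which bounds $\e{|\widetilde H_n(1,t)|^{2q}}$ by a constant multiple of $\big(\sum_k\sigma_k^2\big)^{q}+\sum_k \e{|Y_k/\sqrt n|^{2q}}$, where $\sigma_k^2:=\var{Y_k/\sqrt n}$. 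The first term is bounded because the total variance of $\widetilde H_n(1,t)$ converges to the finite kernel value, which was already established in the proof of Theorem~\ref{convergence_hn} (condition~\ref{cond_var_conv}). The second term is negligible once one has the per-summand estimate $\e{|\log X-\e{\log X}|^{2q}}\le C_q(\var{\log X})^{q}$ for beta variables with parameters bounded below: it then reduces to $C_q\sum_k\sigma_k^{2q}\le C_q(\max_k\sigma_k^2)^{q-1}\sum_k\sigma_k^2$, which tends to $0$ since $\max_k\sigma_k^2\to 0$ (this is exactly condition~\ref{cond_var_bound}, verified in the proof). This ``sub-Gaussian type'' per-summand bound is precisely the all-order analogue of condition~\ref{cond_moments}, and extending the delicate cumulant and polygamma estimates of Appendix~\ref{betamom} (Theorems~\ref{mom_high} and~\ref{mom_high_2}) and Appendix~\ref{appendix_polygamma} from the single order $6$ to all even orders is the real technical content behind the statement.
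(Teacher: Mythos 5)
Your argument is correct, but it takes a genuinely different route from the paper. The paper deduces the almost sure statement from the same chaining machinery it built for tightness: Theorem~\ref{sup_ineq} gives the global tail bound $\p{\sup_{s,t}|Z_n(s)-Z_n(t)|>\lambda}\le C''\lambda^{-(2k+2)}$ for any process satisfying \ref{cond_indep}--\ref{cond_lattice}, Borel--Cantelli then yields $a_n\|Z_n\|_\infty\to 0$ a.s.\ whenever $a_n=O(n^{-(1+\delta)/(2k+2)})$, and the rate $n^{-\varepsilon}$ is reached by the device of embedding $\widetilde H_n$ into an artificially high-dimensional process $\widehat H_n(s_1,\ldots,s_k)=\widetilde H_n(s_1,s_2)$ with $k=\lceil\varepsilon^{-1}+1\rceil$, so that the moment order $2k+2$ grows as $\varepsilon\to 0$. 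You instead exploit the concrete structure of $\widetilde H_n$ directly: piecewise constancy on the grid, the martingale property in $b=\lfloor p_n s\rfloor$ for fixed $t$ (Doob's $L^{2q}$ inequality), a union bound over the $n+1$ values of $\lfloor nt\rfloor$, and Rosenthal at order $2q$ with $q>1/\varepsilon$; this avoids the dimension-inflation trick and is arguably more transparent for this particular process, while the paper's version applies to a general class of lattice processes and reuses machinery already in place. Both routes run on the same fuel, namely per-summand bounds $\e{|Y-\e{Y}|^{2q}}\le C_q\var{Y}^{q}$ for log-beta variables at \emph{every} order; on this point you are more pessimistic than necessary, since Theorems~\ref{mom_high} and~\ref{mom_high_2} in Appendix~\ref{betamom} are already stated and proved for arbitrary order $n$ (with constants depending only on $n$ and the lower bound $M\ge 1/2$ on the beta parameters), so no extension of the polygamma estimates is needed --- you can simply cite them. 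The one bookkeeping point to watch is that $C_n$ and $E_n$ involve the \emph{same} random variables $p_{2n+1,2i,j}$, so their summands are not mutually independent across the two processes; when applying Rosenthal you must either group the full contribution $(m+1)\log p+m\log(1-p)$ of each such variable into a single independent summand (and bound its $2q$-th moment by splitting it again via the triangle inequality, which suffices because both $\sum_k\var{U_k}$ and $\sum_k\var{V_k}$ are separately bounded), or apply Doob and Rosenthal to each of $A_n,\ldots,E_n$ separately and combine with Minkowski. Either way the estimate closes and your Borel--Cantelli step is sound.
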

    \begin{proof}
        We will prove a more general result, namely that for a process $Z_n$ satisfying assumptions \ref{cond_indep} -- \ref{cond_lattice} we have $a_n \|Z_n\|_{\infty} \xrightarrow{a.s.} 0$ for all sequences $a_n$ that satisfy $a_n = O(n^{-(1 + \delta)/(2k + 2)})$ for some $\delta > 0$. In order to prove this, note that by (\ref{ineq_dev}) and (\ref{ineq_fn}) we can apply Theorem~\ref{sup_ineq} from Appendix~\ref{sec6} to conclude that
        \begin{align*}
            \p{\sup \limits_{s, k \in [0, 1]^k} |Z_n(s) - Z_n(t)| > \lambda} \le C'' \lambda^{-(2k+2)}
        \end{align*}
        holds for all $\lambda > 0$, where $C''$ is some constant independent of $\lambda$. Combined with \ref{cond_moments} and \ref{cond_var_conv} this yields
        \begin{align*}
            &\sum \limits_{n = 1}^\infty \p{a_n \|Z_n\|_\infty > \lambda}\\
            \le{}& \sum \limits_{n = 1}^\infty \p{\sup \limits_{s, t \in [0, 1]^k} |Z_n(s) - Z_n(t)| > a_n^{-1} \lambda /2} + \p{|Z_n(0)| > a_n^{-1}\lambda /2} \\
            \le{}& \sum \limits_{n = 1}^\infty \left(C'' + C(f(0, 0) + o(1))^{k + 1}\right)a_n^{2k + 2}(\lambda/2)^{-(2k + 2)} \\
            ={}& \left(C'' + C(f(0, 0) + o(1))^{k + 1}\right)(\lambda/2)^{-(2k + 2)} \sum \limits_{n = 1}^\infty O(n^{-(1+\delta)}) < \infty.
        \end{align*}
        By the Borel-Cantelli Lemma, this implies $a_n \|Z_n\|_\infty \xrightarrow{a.s.} 0$.
        
        To prove the statement of Theorem~\ref{conv_as}, set $k = \lceil\varepsilon^{-1} + 1 \rceil$, $\delta = 1$ and $a_n = n^{-1/(k+1)}$. We can now define a $k$-dimensional partial sum process $\widehat{H}_n$ via
        \begin{align*}
            \widehat{H}_n(s_1, \ldots, s_k) = \widetilde{H}_n(s_1, s_2).
        \end{align*}
        Similar to the proof of Theorem~\ref{convergence_hn}, we can decompose $\widehat{H}_n$ into a sum of processes that satisfy the conditions \ref{cond_indep} -- \ref{cond_lattice} and from the previous result we can conclude
        \begin{align*}
            n^{-\varepsilon} \|\widetilde{H}_n\|_\infty \le a_n \|\widehat{H}_n\|_\infty \xrightarrow{a.s.} 0.
        \end{align*}
    \end{proof}
    
    \begin{cor}[Law of large numbers] \label{thm_lln}
        If $p_n \xrightarrow{n \to \infty} \infty$, we have
        \begin{align*}
            \frac{H_n(s, t)}{n p_n} \xrightarrow[\rm uniformly]{a.s.} -\frac{s^2}{2}(t + (1 - t) \log(1 - t))
        \end{align*}
        for $n \to \infty$.
    \end{cor}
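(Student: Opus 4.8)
The plan is to deduce the law of large numbers from the almost sure fluctuation bound of Theorem~\ref{conv_as} together with an explicit evaluation of the mean. Writing $H_n(s,t) = \sqrt{n}\,\widetilde H_n(s,t) + \e{H_n(s,t)}$ and dividing by $np_n$ gives
\[
\frac{H_n(s,t)}{np_n} = \frac{\e{H_n(s,t)}}{np_n} + \frac{1}{\sqrt n\,p_n}\,\widetilde H_n(s,t).
\]
For the stochastic remainder I would fix $\varepsilon\in(0,1/2)$; since $p_n\ge 1$ we have $\sup_{s,t}\big|\tfrac{1}{\sqrt n p_n}\widetilde H_n(s,t)\big|\le n^{-1/2}\|\widetilde H_n\| = n^{\varepsilon-1/2}\big(n^{-\varepsilon}\|\widetilde H_n\|\big)$, and Theorem~\ref{conv_as} forces $n^{-\varepsilon}\|\widetilde H_n\|\to 0$ almost surely, so this term tends to $0$ uniformly in $(s,t)$ almost surely. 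This reduces the corollary to the deterministic statement that $\e{H_n(s,t)}/(np_n)$ converges, uniformly on $[0,1]^2$, to $-\tfrac{s^2}{2}\big(t+(1-t)\log(1-t)\big)$.

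For the mean I would start from the representation \eqref{def_h}, use linearity of expectation, and insert the digamma identity $\e{\log X} = \psi_0(a)-\psi_0(a+b)$ for $X\sim\beta(a,b)$ (the companion of \eqref{form_kappa1}), with parameters read off from \eqref{beta_param_1}--\eqref{beta_param_2}. Expanding each digamma difference with the monotonicity and Taylor estimates for the polygamma functions collected in Appendix~\ref{appendix_polygamma}, one separates the families corresponding to $A_n,\dots,E_n$. The driving contribution comes from the type-two variables, for which $\e{\log r_{2n+1,\cdot,j}}\approx -\tfrac{j/2}{(p_n+1)(n-i+1)}$; summing first over $j$ (which produces a factor $\sim (p_n s)^2/4$) and recognising the $i$-sum as a Riemann sum yields
\[
\frac{1}{np_n}\sum_{i}\sum_{j}(\lfloor nt\rfloor-i)\,\e{\log r_{2n+1,2i,j}} \longrightarrow -\frac{s^2}{4}\int_0^{t}\frac{t-x}{1-x}\,dx,
\]
together with the same limit for the odd-indexed block, while one checks that the remaining families are negligible after division by $np_n$. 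Adding the two blocks and evaluating the elementary integral by writing $\tfrac{t-x}{1-x}=1-\tfrac{1-t}{1-x}$, namely $\int_0^t\tfrac{t-x}{1-x}\,dx = t+(1-t)\log(1-t)$, produces the claimed limit.

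Finally I would upgrade pointwise convergence of the mean to uniform convergence on the compact square: all error terms arising above are $O(n^{-1})+O(p_n^{-1})$ uniformly in $(s,t)$, and the limit is continuous, so uniformity follows at once (alternatively by Dini's theorem after checking monotonicity of the Riemann approximations). The main obstacle is exactly this mean evaluation: one must carry the polygamma expansions to sufficient order \emph{uniformly} over the full range $1\le i\le\lfloor nt\rfloor$, $0\le j\le\lfloor p_n s\rfloor-1$ — in particular in the regime where $i$ is close to $\lfloor nt\rfloor$ and the arguments of $\psi_0$ are only of order $p_n$ — in order to certify both that the type-two family produces exactly the stated Riemann integral and that every other family is genuinely of lower order at the scale $np_n$. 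The delicate polygamma inequalities of the Appendix are what make this bookkeeping go through, and the endpoint behaviour as $t\to 1$, where $(1-t)\log(1-t)\to 0$, must be controlled with some care.
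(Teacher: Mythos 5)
Your overall architecture is the same as the paper's: Theorem~\ref{conv_as} disposes of the centred fluctuations (the paper uses it exactly in the form $p_n^{-1}n^{-1/2}\|\widetilde H_n\|_\infty\to 0$), and the corollary reduces to the deterministic statement $\e{H_n(s,t)}/(np_n)\to-\tfrac{s^2}{2}\int_0^t\tfrac{t-x}{1-x}\,dx$. The paper reads this off from the $m=1$ case of Lemma~\ref{lemma_cumulant1} and Lemma~\ref{lemma_cumulant2} (the first cumulant being the mean), while you propose to redo the digamma computation directly; that is a legitimate equivalent route, and your treatment of the $r$-family is fine: $\e{\log r_{2n+1,2i,j}}=\psi_0(a)-\psi_0(a+b)\approx-\tfrac{j/2}{(p_n+1)(n-i+1)}$, and the resulting double sum does converge to $-\tfrac{s^2}{4}\int_0^t\tfrac{t-x}{1-x}\,dx$ per block after division by $np_n$.

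The genuine gap is the step you explicitly defer to ``one checks'': the negligibility of the families built from the $p_{2n+1,i,j}$. By \eqref{beta_param_1} these are $\beta(a,b)$ variables with $a\approx b\approx(p_n+1)(n-i+1)$, so $\e{\log p_{2n+1,i,j}}=\psi_0(a)-\psi_0(a+b)=\log\tfrac{a}{a+b}+O(a^{-1})=-\log 2+O\big(j/(np_n)\big)$, and likewise for $\e{\log(1-p_{2n+1,i,j})}$: the leading term is the constant $-\log 2$, not $o(1)$. In \eqref{def_h} these logarithms carry exponents $\lfloor nt\rfloor-i+1$ and $\lfloor nt\rfloor-i$, so a single summand of the $T_n$-type sums has mean $\approx-(2(\lfloor nt\rfloor-i)+1)\log 2$, and summing over $1\le i\le\lfloor nt\rfloor$ and $0\le j\le\lfloor p_ns\rfloor-1$ produces a contribution of order $n^2p_n$ to $\e{H_n(s,t)}$ --- dominant, not negligible, at scale $np_n$. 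Nothing in your sketch supplies the cancellation that would be required, and you cannot simply lean on ``the delicate polygamma inequalities of the Appendix'': the paper's own bound here, the $m=1$ case of Lemma~\ref{lemma_cumulant2}, rests on the identity $2^m\psi_{m-1}(a+b)=\psi_{m-1}\big(\tfrac{a+b}{2}\big)+\psi_{m-1}\big(\tfrac{a+b+1}{2}\big)$, which is the Abramowitz--Stegun multiplication formula and is valid only for $m\ge 2$; for $m=1$ the digamma duplication formula carries an extra additive $2\log 2$, which is precisely the $-\log 2$ per factor identified above. So the mean evaluation --- which you correctly single out as the crux --- does not close as proposed; to complete the argument one would have to either exhibit a genuine cancellation of these $-(2d+1)\log 2$ terms or work with a recentred/renormalised version of $H_n$.
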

    \begin{proof}
        By Theorem~\ref{conv_as} we know 
        \begin{align*}
            \frac{\| H_n - \e{H_n}\|_\infty}{n p_n} = p_n^{-1} n^{-1/2} \|\tilde{H}_n\|_\infty \xrightarrow{a.s.} 0
        \end{align*}
        The assertion now follows from Lemma~\ref{lemma_cumulant1} and Lemma~\ref{lemma_cumulant2} of the following section, which yield
        \begin{align*}
            \frac{\e{H_n}}{n p_n} \xrightarrow[\text{uniformly}]{n \to \infty} -\frac{s^2}{2} \int_0^t \frac{t - x}{1 - x} \, dx = -\frac{s^2}{2}(t + (1 - t)\log(1-t)).
        \end{align*}
    \end{proof}

\section{Mod-\texorpdfstring{$\phi$}{Phi}-convergence, moderate and large deviations} \label{section_mod_phi}
    In this section we study further stochastic properties of the random variables  $H_n(s, t)$  defined in \eqref{def_h}.
    We are particularly interested in the recently introduced concept of mod-$\phi$-convergence [see  \cite{femeni2016}] 
    and large deviation properties.  The different limiting results for the sequence  $a_n^{-1} \big(H_n(s, t) - \e{H_n(s, t)}\big)$ 
    obtained in this section and in Section~\ref{section_applied_convergence} are summarised in Figure \ref{fig1}.
    
   To be precise consider the strip
        \begin{align*}
            S_{(a, b)} = \{z \in \mathbb{C} \mid a < \Re(z) < b\},
        \end{align*}
where  $a < 0 < b$ and let $\phi$ be a non-constant infinitely divisible distribution with moment generating function
$ \exp(\eta(z))$. Let  $(t_n)_{n \in \N } $  denote a real valued sequence and $(X_n)_{n \in \N } $  be a sequence 
of real-valued random variables with existing  moment-generating functions on $S_{(a, b)}$ 
such that $t_n \to \infty$  and that for some non-vanishing analytic function $\psi$  on $S_{(a, b)}$
     \begin{align*}
            \exp(-t_n \eta(z)) \e{\exp(z X_n)} \to \psi(z)
        \end{align*}
        holds locally uniform on $S_{(a, b)}$.  
        Following   \cite{femeni2016} the  sequence $(X_n)_{n \in \mathbb{N}}$  is said to converge mod-$\phi$ on $S_{(a, b)}$ with speed $(t_n)_{n \in \mathbb{N}}$.

     Mod-$\phi$-convergence is a very strong mode of convergence that implies the asymptotic behaviour of $(X_n)_{n \in \mathbb{N}}$ at different scales. Most prominently, Berry-Esseen bounds and large deviation results can be derived from mod-$\phi$-convergence. Particularly the large deviation results are stronger than the results that are usually obtained by a large deviation principle. The former gives an asymptotic equivalent for the probability $\p{X_n \ge t_n x}$, while the latter only yields the limiting behaviour for the logarithm of the probability.
    
   The core idea behind mod-$\phi$-convergence is that the distribution of $X_n$ is close to the distribution of the sum of $t_n$ i.i.d. $\phi$-distributed random variables. The function $\psi(z)$ measures the error made in this approximation and yields further refinement in the asymptotic formulas. \br
    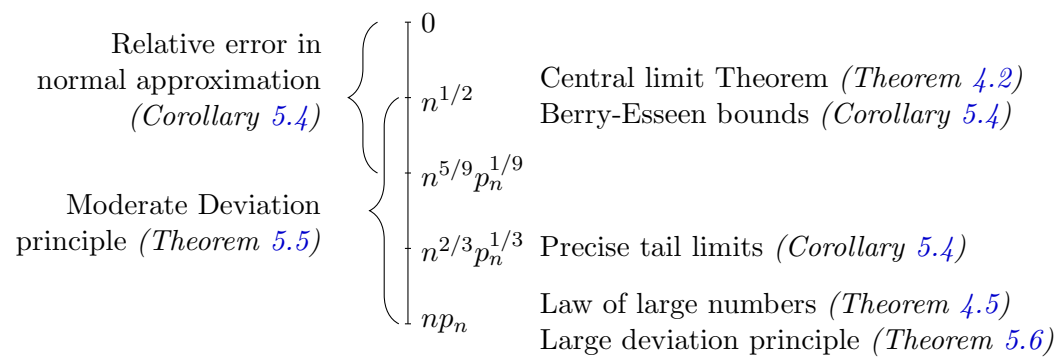
\begin{figure}[H]
        \small
        \begin{tikzpicture}[scale=.4]
            \usetikzlibrary{decorations.pathreplacing}
            \draw (0, 0) -- (0, 10);
            \draw [decorate,decoration={brace,amplitude=1em}] (-.3,0) -- (-.3, 7.5);
            \node at (-2.5, 3.25) [anchor=east, align=right]{Moderate Deviation \\ principle \emph{(Theorem~\ref{moderate_deviations})}};
            \draw [decorate,decoration={brace,amplitude=1em},] (-1,5) -- (-1, 10);
            \node at (-2.5, 8)  [anchor=east, align=right]{Relative error in \\  normal approximation \\ \emph{(Corollary~\ref{cormodphi})}};
            \draw (-.1, 10) -- (.1, 10) node[anchor=west]{0};
            \draw (-.1, 7.5) -- (.1, 7.5) node[anchor=west]{$n^{1/2}$};
            \node[anchor=west, align=left] at (4, 7.5) {Central limit Theorem \emph{(Theorem~\ref{convergence_hn})}  \\ Berry-Esseen bounds \emph{(Corollary~\ref{cormodphi})}};
            \draw (-.1, 5) -- (.1, 5) node[anchor=west]{$n^{5/9} p_n^{1/9}$};
            \draw (-.1, 2.5) -- (.1, 2.5) node[anchor=west]{$n^{2/3} p_n^{1/3}$};
            \node[anchor=west] at (4, 2.5) {Precise tail limits \emph{(Corollary~\ref{cormodphi})}};
            \draw (-.1, 0) -- (.1, 0) node[anchor=west]{$n p_n$};
            \node[anchor=west, align=left] at (4, 0) { Law of large numbers \emph{(Theorem~\ref{thm_lln})} \\ Large deviation principle \emph{(Theorem~\ref{large_deviation})}};
        \end{tikzpicture}
        \caption{ \textit{Overview of all the limiting results for the sequence $a_n^{-1} \big(H_n(s, t) - \e{H_n(s, t)}\big)$, depending on the order of $a_n$.}}
        \label{fig1}
    \end{figure}

    In the following we will first establish mod-$\phi$-convergence of the sequence   $(H_n (s,t))_{n\in \N}$ defined in (\ref{def_h}). Large deviation principles
    are discussed in the second part of this Section. As the moment generating function is closely related to cumulants, we first provide estimates for these objects 
in Lemma~\ref{lemma_cumulant1}  and   Lemma~\ref{lemma_cumulant2}, which are proved in Appendix~\ref{secc}. 

    \begin{lemma} \label{lemma_cumulant1}
    Let  $     r_{2n + 1, i, j} \sim \beta\left(\frac{p_n + 1}{2}(2n - i + 2) - j/2, j/2\right)$ denote independent beta distributed random variables, then  
the cumulants of the random variables 
        \begin{align*}
            S_n &= \sum \limits_{i = 1}^{\lfloor nt \rfloor - 1}\sum \limits_{j = 1}^{\lfloor p_ns \rfloor - 1}(\lfloor nt \rfloor - i)\log(r_{2n + 1, 2i, j}), \\
            S_n' &= \sum \limits_{i = 1}^{\lfloor nt \rfloor}\sum \limits_{j = 1}^{\lfloor p_ns \rfloor - 1}(\lfloor nt \rfloor - i + 1)\log(r_{2n + 1, 2i - 1, j})
        \end{align*}
        satisfy  the inequalities
        \begin{align*}
            \int_0^{t - 2/n} \Big (\frac{t - 2/n-x}{1 - x}\Big )^m \, dx &\le \frac{(-1)^{m}\kappa_m(S_n)}{n \frac{\lfloor p_ns -1 \rfloor\lfloor p_ns\rfloor}{(p_n + 1)^m} \frac{(m - 1)!}{4}} \le \Big(1 + \frac{m}{p_n}\Big ) \int_0^t \Big (\frac{t - x}{1 - x}\Big )^m \, dx \\
            \int_0^{t - 2/n} \Big (\frac{t - 1/n-x}{1 + 1/n - x}\Big )^m \, dx &\le \frac{(-1)^{m}\kappa_m^n(S_n')}{n \frac{\lfloor p_ns -1 \rfloor\lfloor p_ns\rfloor}{(p_n + 1)^m} \frac{(m - 1)!}{4}} \le \Big (1 + \frac{m}{p_n}\Big ) \Big (\int_0^t \Big (\frac{t - x}{1 - x}\Big )^m \, dx  + \frac{1}{n}\Big )
        \end{align*}
        for all $m \ge 1$. In particular
                \begin{align*}
            \max \{|\kappa_m(S_n)|, |\kappa_m(S_n')|\} \le 2 \cdot (m + 1)! n p_n^{2 - m}.
        \end{align*}
    \end{lemma}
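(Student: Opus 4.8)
The plan is to exploit that $S_n$ and $S_n'$ are sums of \emph{independent} random variables, so their cumulants decompose additively, and to combine this with the exact cumulant formula for the logarithm of a Beta variable. For $X\sim\beta(a,b)$ the moment generating function of $\log X$ equals $\e{X^z}=B(a+z,b)/B(a,b)$, whose logarithm is $\log\Gamma(a+z)-\log\Gamma(a+b+z)$ up to an additive constant; differentiating $m$ times at $z=0$ and recalling the notation \eqref{polgma} for the polygamma function yields
\begin{align*}
\kappa_m(\log X)=\psi_{m-1}(a)-\psi_{m-1}(a+b),
\end{align*}
which for $m=2$ is the variance formula \eqref{form_kappa1}. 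By independence this gives
\begin{align*}
\kappa_m(S_n)=\sum_{i=1}^{\lfloor nt\rfloor-1}\sum_{j=1}^{\lfloor p_ns\rfloor-1}(\lfloor nt\rfloor-i)^m\big(\psi_{m-1}(a_{i,j})-\psi_{m-1}(a_{i,j}+b_{i,j})\big),
\end{align*}
where $a_{i,j}=(p_n+1)(n-i+1)-j/2$ and $a_{i,j}+b_{i,j}=(p_n+1)(n-i+1)$; for $S_n'$ the parameter is shifted to $a_{i,j}'+b_{i,j}'=(p_n+1)(n-i+3/2)$ and the weight becomes $\lfloor nt\rfloor-i+1$.

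First I would record a sign and an integral form for each summand. From $(a+k)^{-m}-(a+b+k)^{-m}=m\int_a^{a+b}(y+k)^{-m-1}\,dy$ and summation over $k\ge 0$ one gets the nonnegative representation
\begin{align*}
(-1)^m\big(\psi_{m-1}(a)-\psi_{m-1}(a+b)\big)=m!\int_a^{a+b}\zeta(m+1,y)\,dy\ \ge\ 0,
\end{align*}
with $\zeta(m+1,y)=\sum_{k\ge0}(y+k)^{-m-1}$. In particular $(-1)^m\kappa_m(S_n)\ge 0$, so that $|\kappa_m(S_n)|=(-1)^m\kappa_m(S_n)$ and it suffices to bound this nonnegative quantity from above and below. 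Integral comparison gives $\tfrac{1}{my^m}\le\zeta(m+1,y)\le\tfrac{1}{my^m}+\tfrac{1}{y^{m+1}}$, which underlies the polygamma estimate \eqref{bound_psi_diff} of Appendix~\ref{appendix_polygamma}.

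The core of the argument is to feed these bounds into the double sum and to recognise the result as a Riemann sum. Summing the inner index contributes $\sum_{j=1}^{\lfloor p_ns\rfloor-1}b_{i,j}=\tfrac12\sum_j j=\tfrac14\lfloor p_ns-1\rfloor\lfloor p_ns\rfloor$, which is exactly the normalising factor, while replacing $a_{i,j}$ by $(p_n+1)(n-i+1)$ produces the denominator $(p_n+1)^m$. For the upper bound I would use $\zeta(m+1,y)\le\zeta(m+1,a_{i,j})$ on $[a_{i,j},a_{i,j}+b_{i,j}]$ together with $\zeta(m+1,a)\le\frac{1}{ma^m}\big(1+\frac{m}{a}\big)$ and $a_{i,j}\ge(p_n+1)(n-i+1/2)>p_n$; the correction $1+m/a\le 1+m/p_n$ is precisely the source of the factor $(1+m/p_n)$. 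The remaining sum $\frac1n\sum_{i}\big((\lfloor nt\rfloor-i)/(n-i+1)\big)^m$ is a Riemann sum for $\int_0^t\big((t-x)/(1-x)\big)^m\,dx$; since the integrand is decreasing on $[0,t]$ one bounds it above and below by the appropriate monotone Riemann sums. Absorbing the floor error in the numerator and the shift in the denominator yields the lower endpoint $t-2/n$ for $S_n$, and the additional $+1/2$ shift in $a_{i,j}'$ for $S_n'$ produces the $\tfrac{t-1/n-x}{1+1/n-x}$ integrand and the extra $1/n$.

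The main obstacle I anticipate is bookkeeping rather than conceptual: one must align the floor functions, the $j/2$-shift in $a_{i,j}$, and the orientation of each Riemann-sum inequality so that all errors collect exactly into the stated $t-2/n$, $1/n$ and $1+m/p_n$ corrections, uniformly in $m\ge1$. Once the two-sided integral bound is established, the final estimate follows from crude simplifications: since $(t-x)/(1-x)\le 1$ on $[0,t]$ the integral is at most $t\le 1$, while $\lfloor p_ns-1\rfloor\lfloor p_ns\rfloor\le p_n^2$, $(p_n+1)^{-m}\le p_n^{-m}$ and $1+m/p_n\le 1+m$, so that
\begin{align*}
|\kappa_m(S_n)|\le n\,p_n^{2-m}\,\tfrac{(m-1)!}{4}(1+m)\le 2\,(m+1)!\,n\,p_n^{2-m},
\end{align*}
and identically for $S_n'$, which proves the concluding bound.
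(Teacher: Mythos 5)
Your proposal follows essentially the same route as the paper's own proof: the exact cumulant formula $\kappa_m(\log X)=\psi_{m-1}(a)-\psi_{m-1}(a+b)$, additivity over the independent summands, the two-sided polygamma bound $(m-1)!y^{-m}\le|\psi_m(y)|\le(m-1)!y^{-m}(1+m/y)$ (which you phrase via the Hurwitz zeta series rather than the paper's integral representation \eqref{poly_integral}, an immaterial difference), and then the Riemann-sum comparison with the monotone integrand $\big(\tfrac{t-x}{1-x}\big)^m$ followed by the crude simplification to $2(m+1)!\,n p_n^{2-m}$. The steps you defer as ``bookkeeping'' are exactly the ones the paper carries out explicitly, and your identification of the sources of the $t-2/n$, $1/n$ and $1+m/p_n$ corrections is accurate.
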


    \begin{lemma}\label{lemma_cumulant2} 
        Let $p_{2n + 1, i, j} \sim \beta\left(\frac{p_n + 1}{2}(2n - i + 2) - j/2, \frac{p_n + 1}{2}(2n - i + 2)\right)$ denote independent beta distributed random variables, then the cumulants of the random variables
        \begin{align*}
            T_n &= \sum \limits_{i = 1}^{\lfloor nt \rfloor}\sum \limits_{j = 0}^{\lfloor p_ns \rfloor - 1} \log(p_{2n + 1, 2i, j}^{\lfloor nt \rfloor - i + 1}(1 - p_{2n + 1, 2i, j})^{\lfloor nt \rfloor - i}), \\
            T_n' &= \sum \limits_{i = 1}^{\lfloor nt \rfloor}\sum \limits_{j = 0}^{\lfloor p_ns \rfloor - 1} \log(p_{2n + 1, 2i - 1, j}^{\lfloor nt \rfloor - i + 1}(1 - p_{2n + 1, 2i - 1, j})^{\lfloor nt \rfloor - i + 1})
        \end{align*}
        satisfy the inequalities
        \begin{align*}
            \max\{|\kappa_m(T_n)|, |\kappa_m(T_n')|\} &\le 6 \cdot 4^m (m + 1)! p_n^{-m} \left(n p_n + (\log(n) + 1)p_n^2\right) \\
            &\le 12 \cdot 4^m (m + 1)! n p_n^{2-m}
        \end{align*}
        for all $m \ge 1$.
    \end{lemma}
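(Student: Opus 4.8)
My plan is to reduce the estimate to a bound on the cumulants of a single summand and then to sum. Since the variables in \eqref{beta_param_1} are independent, cumulants are additive, so $\kappa_m(T_n)=\sum_{i=1}^{\lfloor nt\rfloor}\sum_{j=0}^{\lfloor p_ns\rfloor-1}\kappa_m(Y_{i,j})$ with $Y_{i,j}=a\log p_{2n+1,2i,j}+b\log(1-p_{2n+1,2i,j})$, where $a=\lfloor nt\rfloor-i+1$ and $b=\lfloor nt\rfloor-i$ (and $a=b=\lfloor nt\rfloor-i+1$ for the summands of $T_n'$). Writing $p_{2n+1,2i,j}\sim\beta(\alpha,\beta)$, the logarithm of its moment generating function $\e{e^{sY_{i,j}}}=B(\alpha+as,\beta+bs)/B(\alpha,\beta)$ equals $\log\Gamma(\alpha+as)+\log\Gamma(\beta+bs)-\log\Gamma(\alpha+\beta+(a+b)s)$ up to a constant, so $m$-fold differentiation at $s=0$ together with \eqref{polgma} yields the identity
\[
\kappa_m(Y_{i,j}) = a^m\psi_{m-1}(\alpha)+b^m\psi_{m-1}(\beta)-(a+b)^m\psi_{m-1}(\alpha+\beta).
\]

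The essential point is that a direct triangle-inequality estimate of these three terms is too lossy by a factor of order $n$, because they almost cancel. Indeed $a$ and $b$ differ by at most one, while $\alpha$ and $\beta$ differ from their common scale $N:=\tfrac{p_n+1}{2}(2n-2i+2)$ only by the gap $j/2\ll N$; since $\psi_{m-1}(x)\sim(-1)^m(m-2)!\,x^{-(m-1)}$ for $m\ge2$, the pair $a^m\psi_{m-1}(\alpha)+b^m\psi_{m-1}(\beta)$ cancels $(a+b)^m\psi_{m-1}(\alpha+\beta)$ to leading order. I would make this quantitative by expanding each polygamma factor around $N$ with the mean value theorem (using $\psi_{m-1}'=\psi_m$, $\psi_m'=\psi_{m+1}$) and invoking the polygamma inequalities of Appendix~\ref{appendix_polygamma}, thereby trading each cancellation for one factor $\psi_m$ resp.\ $\psi_{m+1}$ evaluated at a point $\asymp N$, of size $\asymp N^{-m}$ resp.\ $N^{-(m+1)}$. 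This produces a residual estimate of the schematic shape
\[
|\kappa_m(Y_{i,j})|\le C\,4^m(m+1)!\Big(\frac{a^m}{N^m}+\frac{j^2a^m}{N^{m+1}}\Big),
\]
where the first term is the non-cancelling correction at order $N^{-m}$ and the second is the convexity residual created by the gap $j/2$ between $\alpha$ and $\beta$; the constants $4^m$ and $(m+1)!$ absorb the factors $2^m$ (through $(a+b)^m\approx 2^m a^m$ and the denominators $2^{m-1}$) together with the successive derivatives of $\psi_{m-1}$.

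It then remains to sum. Because $a\le n-i+1$ and $N\asymp(p_n+1)(n-i+1)$ one has $a^m/N^m\lesssim(p_n+1)^{-m}$, so the first term summed over the at most $np_n$ index pairs contributes a multiple of $4^m(m+1)!\,p_n^{-m}\,np_n$. For the second term, $\sum_{j}j^2\lesssim p_n^3$ and $\sum_i a^m/N^{m+1}\lesssim(p_n+1)^{-(m+1)}\sum_i(n-i+1)^{-1}\asymp(p_n+1)^{-(m+1)}(\log n+1)$, contributing a multiple of $4^m(m+1)!\,p_n^{-m}(\log n+1)p_n^2$. Adding the two gives the first inequality of the lemma, and the coarser bound follows from $np_n+(\log n+1)p_n^2\le2np_n^2$, which holds since $p_n\ge1$ and $\log n+1\le n$.

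The main obstacle is clearly the second step: turning the heuristic cancellation into an explicit bound whose constants are uniform in $m$. I would organise this by isolating a small number of monotonicity and difference inequalities for the polygamma functions on the range $x\ge p_n/2$ in the Appendix, so that every cancellation is replaced by exactly one factor $\psi_m$ (or $\psi_{m+1}$) while the factorials and powers of four are tracked explicitly. The computation for $T_n'$ is marginally cleaner since $a=b$ makes the symmetry exact, whereas for $T_n$ the asymmetry $a-b=1$ feeds only into the lower-order terms and does not affect the final order. (The case $m=1$ is exceptional, since the leading cancellation rests on the power law $\psi_{m-1}(x)\asymp x^{-(m-1)}$ available only for $m\ge2$; the first cumulant is therefore the mean and must be read off directly from the digamma asymptotics, as in the mean estimates underlying Lemma~\ref{lemma_cumulant1}.)
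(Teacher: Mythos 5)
Your strategy is in substance the paper's own: both start from the cumulant identity
\begin{align*}
\kappa_m(Y_{i,j})=a^m\psi_{m-1}(\alpha)+b^m\psi_{m-1}(\beta)-(a+b)^m\psi_{m-1}(\alpha+\beta),
\end{align*}
exploit the near-cancellation of the three terms, and then sum over the index pairs exactly as you do. The only organisational difference is how the cancellation is made quantitative: writing $a=d+1$, $b=d$, the paper isolates the leading block $d^m\{\psi_{m-1}(\alpha)+\psi_{m-1}(\beta)-2^m\psi_{m-1}(\alpha+\beta)\}$, treats it with the exact duplication formula $2^m\psi_{m-1}(\alpha+\beta)=\psi_{m-1}\big(\tfrac{\alpha+\beta}{2}\big)+\psi_{m-1}\big(\tfrac{\alpha+\beta+1}{2}\big)$ together with the second-difference bound \eqref{bnd_kn}, and controls the binomial remainders by \eqref{cumulant_bound2} and \eqref{cumulant_bound3}; you propose a mean-value expansion of all three polygamma values around the common scale $N$. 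For $m\ge 2$ either route works and produces the same residuals.

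Two caveats. First, your schematic per-term bound $C4^m(m+1)!\big(a^mN^{-m}+j^2a^mN^{-m-1}\big)$ is incomplete: for $T_n$ (where $b=a-1$) the mismatch between $a^m+b^m$ and $2^{1-m}(a+b)^m$ contributes an extra $\asymp m^2a^{m-2}|\psi_{m-1}(N)|\asymp m!\,a^{m-2}N^{-(m-1)}$, and its interaction with the gap $j/2$ between $\alpha$ and $\beta$ contributes a further $\asymp (j+1)a^{m-1}N^{-m}$; these are the analogues of the paper's \eqref{cumulant_bound2} and \eqref{cumulant_bound3} and are not pointwise dominated by your two listed terms (take $a=1$). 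They do sum to $O\big(4^m(m+1)!\,p_n^{2-m}(\log n+1)\big)$, so your final accounting survives once they are added — a bookkeeping omission, not a structural one. Second, and more seriously, the $m=1$ case you set aside cannot be rescued by ``reading off the digamma asymptotics'': since $\psi_0(x)=\log x+O(1/x)$ is logarithmic rather than a power law, one gets $\kappa_1(Y_{i,j})=-(2d+1)\log 2+O(d/N)$, and summing over $i,j$ gives $|\kappa_1(T_n)|\asymp \lfloor p_ns\rfloor\lfloor nt\rfloor^2\log 2$, of order $n^2p_n$ — larger by a factor of $n$ than the stated bound. So at $m=1$ the inequality itself fails; the cancellation you correctly restrict to $m\ge2$ is precisely what is missing there (the paper's own argument silently drops the extra $2\log 2$ that the digamma duplication formula carries when $m=1$). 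In short: for $m\ge2$ your plan is sound and essentially identical to the paper's; for $m=1$ no completion of the argument is possible as the statement stands.
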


    \begin{thm}[Mod-Gaussian-convergence]\label{mod_phi_conv}
        For any fixed $s, t \in (0, 1]$ the sequence 
        \begin{align*}
            \left(\frac{p_n}{n}\right)^{1/3}(H_n(s, t) - \e{H_n(s, t)})
        \end{align*}
        converges mod-Gaussian on any strip $S_{(a, b)}$ ($-\infty < a <  b < \infty $)
        with speed 
            \begin{align}
            t_n = \left(\frac{p_n}{n}\right)^{2/3} \kappa_2(H_n(s, t))
            \sim{}& n^{1/3} p_n^{2/3} \frac{s^2}{2} \int_0^t \Big (\frac{t - x}{1 - x}\Big )^2 \, dx \to \infty \label{speed_tn}
        \end{align}     
       and limiting function
        \begin{align*}
            \psi(z) = \exp\Big (-z^3 \frac{s^2}{6} \int_0^t \Big (\frac{t - x}{1 - x}\Big )^3 \, dx\Big).
        \end{align*}
    \end{thm}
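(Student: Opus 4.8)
The plan is to verify the definition of mod-Gaussian convergence directly through the cumulant generating function. Write $\rho_n=(p_n/n)^{1/3}$ and $X_n=\rho_n(H_n(s,t)-\e{H_n(s,t)})$. Since $\phi$ is Gaussian we have $\eta(z)=z^2/2$, so the goal is to show that
\[
\Lambda_n(z):=\log\e{\exp(zX_n)}-\tfrac{t_n}{2}z^2
\]
converges locally uniformly on $S_{(a,b)}$ to $\log\psi(z)=-z^3\tfrac{s^2}{6}\int_0^t\big(\tfrac{t-x}{1-x}\big)^3\,dx$, and in passing to confirm the asymptotics \eqref{speed_tn} of the speed.

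First I would record the structural decomposition. By \eqref{def_h} we have $H_n(s,t)=S_n+S_n'+T_n+T_n'$, with $S_n,S_n'$ the sums of Lemma~\ref{lemma_cumulant1} and $T_n,T_n'$ the sums of Lemma~\ref{lemma_cumulant2}. The four blocks are built from disjoint families of the independent beta variables $r_{2n+1,\cdot,\cdot}$ and $p_{2n+1,\cdot,\cdot}$ and are therefore jointly independent, so cumulants add: $\kappa_m(H_n)=\kappa_m(S_n)+\kappa_m(S_n')+\kappa_m(T_n)+\kappa_m(T_n')$. Because cumulants of order $\ge 2$ are translation invariant, $\kappa_m(X_n)=\rho_n^m\kappa_m(H_n)$ for $m\ge 2$ while $\kappa_1(X_n)=0$. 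As each block is a finite sum of logarithms of beta variables whose parameters grow like $p_nn$, the moment generating function of $X_n$ exists on any fixed strip once $n$ is large, so the cumulant expansion is legitimate; expanding it and using the identity $t_n=\rho_n^2\kappa_2(H_n)=\kappa_2(X_n)$ makes the quadratic term cancel exactly, leaving
\[
\Lambda_n(z)=\frac{\rho_n^3\kappa_3(H_n)}{6}\,z^3+\sum_{m\ge 4}\frac{\rho_n^m\kappa_m(H_n)}{m!}\,z^m.
\]

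The heart of the matter is the cubic term. Sandwiching in the two-sided estimate of Lemma~\ref{lemma_cumulant1} (the inner and outer integrals both tend to $\int_0^t(\frac{t-x}{1-x})^m\,dx$ and $m/p_n\to0$) gives $(-1)^m\kappa_m(S_n)\sim n s^2 p_n^{2-m}\frac{(m-1)!}{4}\int_0^t(\frac{t-x}{1-x})^m\,dx$, and likewise for $S_n'$. Taking $m=3$ and multiplying by $\rho_n^3=p_n/n$ yields $\rho_n^3(\kappa_3(S_n)+\kappa_3(S_n'))\to -s^2\int_0^t(\frac{t-x}{1-x})^3\,dx$. The contribution of $T_n,T_n'$ must be shown to be negligible at this order, and this is exactly where the \emph{sharper} of the two bounds in Lemma~\ref{lemma_cumulant2} is needed: it gives $|\kappa_3(T_n)|\le C p_n^{-3}(np_n+(\log n+1)p_n^2)$, hence $\rho_n^3|\kappa_3(T_n)|=O(p_n^{-1}+n^{-1}\log n)\to0$. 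Thus the cubic term tends to $\log\psi(z)$. The same estimate at $m=2$ gives $\kappa_2(S_n)+\kappa_2(S_n')\sim\frac{ns^2}{2}\int_0^t(\frac{t-x}{1-x})^2\,dx$ while $\kappa_2(T_n)+\kappa_2(T_n')=o(n)$, so $t_n=\rho_n^2\kappa_2(H_n)\sim n^{1/3}p_n^{2/3}\frac{s^2}{2}\int_0^t(\frac{t-x}{1-x})^2\,dx\to\infty$, which is \eqref{speed_tn}.

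It remains to control the tail uniformly on compact subsets of the strip. The final bounds of the two lemmas combine to $|\kappa_m(H_n)|\le C\,4^m(m+1)!\,n p_n^{2-m}$, so, writing $q_n=(p_n^2 n)^{1/3}\to\infty$, one checks $\rho_n^m n p_n^{2-m}=q_n^{3-m}$ and hence $\rho_n^m|\kappa_m(H_n)|/m!\le C(m+1)4^m q_n^{3-m}$. For $|z|\le R$ and $q_n>8R$ the tail is bounded by $Cq_n^3\sum_{m\ge4}(m+1)(4R/q_n)^m=O(q_n^{-1})\to0$, uniformly in $z$. Combining the three pieces gives $\Lambda_n\to\log\psi$ locally uniformly, and since $\psi=\exp(\text{cubic})$ is analytic and non-vanishing on the strip, mod-Gaussian convergence with speed $t_n$ follows. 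The main obstacle is precisely the cubic term: one needs both the exact two-sided asymptotics of Lemma~\ref{lemma_cumulant1} to pin down the constant in $\psi$ and the finer bound of Lemma~\ref{lemma_cumulant2} to annihilate the $T$-blocks at order three, since the cruder bound $n p_n^{2-m}$ leaves an $O(1)$ residue there; once the $q_n^{3-m}$ scaling is isolated the tail estimate is routine.
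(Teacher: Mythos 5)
Your proposal is correct and follows essentially the same route as the paper: expand the cumulant generating function so that the quadratic term cancels by the definition of $t_n$, extract the cubic term from the two-sided asymptotics of Lemma~\ref{lemma_cumulant1} while using the sharper bound of Lemma~\ref{lemma_cumulant2} to suppress the $T_n,T_n'$ contributions, and kill the tail $\sum_{m\ge 4}$ with the uniform bound $|\kappa_m(H_n)|\le C\,4^m(m+1)!\,np_n^{2-m}$, which is exactly the paper's $O(n^{-1/3}p_n^{-2/3})$ estimate written in terms of $q_n=(p_n^2n)^{1/3}$. The only differences are presentational (your explicit additivity-of-cumulants remark and the $q_n$ bookkeeping), not mathematical.
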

    \begin{proof}
      Recalling the definition of $   H_n(s, t)$ in   (\ref{def_h}), it follows by an application of
     Lemma \ref{lemma_cumulant1} and \ref{lemma_cumulant2}    that $t_n \to \infty$. More precisely
        \begin{align*}
            t_n ={}& 2\frac{p_n^{2/3}}{n^{2/3}}  n \frac{(\lfloor p_n s\rfloor - 1) \lfloor p_n s\rfloor}{(p_n + 1)^2} \frac{1}{4} \Big (\int_0^t \Big (\frac{t - x}{1 - x}\Big)^2 \, dx + o(1)\Big ) + \frac{p_n^{2/3}}{n^{2/3}} O(n p_n^{-1} + \log(n))\nonumber \\
            \sim{}& n^{1/3} p_n^{2/3} \frac{s^2}{2} \int_0^t \Big (\frac{t - x}{1 - x}\Big )^2 \, dx \to \infty ,
        \end{align*}    
        which proves \eqref{speed_tn}.
        By the definition of the cumulant-generating function the equation
        \begin{align*}
            &\log \Big\{\exp(-t_n z^2/2) \e{\exp(z p_n^{1/3} n^{-1/3} (H_n(s, t)) - \e{H_n(s, t)})}\Big\} \\
            ={}& \sum \limits_{j = 3}^\infty \frac{z^j}{j!} \kappa_j(p_n^{1/3} n^{-1/3}H_n(s, t)) \\
            ={}& \frac{z^3}{6} p_n n^{-1} \kappa_3(H_n(s, t)) + \sum \limits_{j = 4}^\infty \frac{z^j}{j!} p_n^{j/3} n^{-j/3} \kappa_j(H_n(s, t)).
        \end{align*}
        holds. As before we get by an application of Lemma \ref{lemma_cumulant1} and \ref{lemma_cumulant2}  
        \begin{align*}
            \frac{z^3}{6} p_n n^{-1} \kappa_3(H_n(s, t)) 
            ={}& - \frac{z^3}{3} \Big (\int_0^t \Big (\frac{t - x}{1 - x}\Big)^3 \, dx + o(1)\Big ) \frac{p_n \lfloor p_n s -1 \rfloor \lfloor p_n s\rfloor}{(p_n + 1)^3} \frac{1}{2} \\
            & + z^3 O(p_n n^{-1} p_n^{-3} (n p_n + (\log(n) + 1) p_n^2)) \\
            ={}&-\frac{z^3}{6} s^2 \int_0^t \Big (\frac{t - x}{1 - x}\Big)^3 \, dx + o(1)
        \end{align*}   
The remainder converges locally uniform to $0$, which follows from the inequality  
        \begin{align*}
            \Big |\sum \limits_{j = 4}^\infty \frac{z^j}{j!} p_n^{j/3} n^{-j/3} \kappa_j(H_n(s, t))\Big | 
            \le{}& 24 \sum \limits_{j = 4}^\infty \frac{|z|^j}{j!} p_n^{j/3}n^{-j/3} (j + 1)! 4^j n p_n^{2 - j} \\
            ={}& 9216 |z|^4 n^{-1/3} p_n^{-2/3}\sum \limits_{j = 0}^\infty (j + 4) \Big (\frac{4|z|}{n^{1/3} p_n^{2/3}}\Big )^j ~. 
        \end{align*}
    \end{proof}
    From the mod-$\phi$-converge established in Theorem \ref{mod_phi_conv}  we can derive several conclusions. Exemplarily 
    we mention the following results, which are obtained by an application of   Proposition 4.1.1, Theorem 4.3.1 and  Theorem 4.2.1 in \cite{femeni2016}
    \begin{cor}  \label{cormodphi}~~\\ 
    \vspace{-.5cm}
    \begin{itemize}
    \item[(1)]  Berry-Esseen bound
        \begin{align*}
            \sup \limits_{x \in \mathbb{R}} \Big|\pr \Big ( {\frac{H_n(s, t) - \e{H_n(s, t)}}{\sqrt{\var{H_n(s, t)}}} \le x}  \Big ) - \Phi(x)\Big| = o\left(\frac{1}{\sqrt{t_n}}\right) = o\left(n^{-1/6} p_n^{-1/3}\right).
        \end{align*}
    \item[(2)] Relative error in normal approximation: 
        For any sequence $x_n$ satisfying $x_n = o(t_n^{1/6}) = o(n^{1/18} p_n^{1/9})$ we have
        \begin{align*}
            \pr \Big ( {\frac{H_n(s, t) - \e{H_n(s, t)}}{\sqrt{\var{H_n(s, t)}}} \le x_n}  \Big) = \Phi(x_n)(1 + o(1)).
        \end{align*}
      \item[(3)]  Precise tail limits: 
        For $x > 0$ we have
        \begin{align*}
            &\pr \Big ( {H_n(s, t) - \e{H_n(s, t)} \ge \frac{n^{1/3} t_n}{p_n^{1/3}} x } \Big)  \\
            ={}& \frac{1}{x \sqrt{2\pi t_n}} \exp\Big (-t_n x^2/2 -x^3 \frac{s^2}{6} \int_0^t \Big (\frac{t - y}{1 - y}\Big )^3 \, dy\Big)(1 + o(1)).
        \end{align*}
For $x < 0$ we have
        \begin{align*}
            &\pr \Big ( {H_n(s, t) - \e{H_n(s, t)} \le \frac{n^{1/3} t_n}{p_n^{1/3}} x } \big )  \\
            ={}& \frac{1}{-x \sqrt{2\pi t_n}} \exp\Big (-t_n x^2/2 -x^3 \frac{s^2}{6} \int_0^t \Big (\frac{t - y}{1 - y}\Big )^3 \, dy\Big )(1 + o(1)).
        \end{align*}
    \end{itemize}
    \end{cor}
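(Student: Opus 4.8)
The plan is to read off all three assertions directly from the mod-Gaussian convergence already established in Theorem~\ref{mod_phi_conv}, by feeding it into the general machinery of \cite{femeni2016}. Write $Y_n = (p_n/n)^{1/3}(H_n(s,t)-\e{H_n(s,t)})$ for the centered and rescaled variable. Theorem~\ref{mod_phi_conv} states precisely that $Y_n$ converges mod-Gaussian on every strip $S_{(a,b)}$ with speed $t_n$ (given in \eqref{speed_tn}) and non-vanishing analytic limiting function $\psi$, so the hypotheses of Proposition~4.1.1, Theorem~4.3.1 and Theorem~4.2.1 in \cite{femeni2016} are met. The only genuine work is to track the scaling constants back from $Y_n$ to $H_n(s,t)$.

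First I would record the two elementary scaling identities. Since the second cumulant is unaffected by centering and scales as the square of the multiplier, $\var{Y_n} = (p_n/n)^{2/3}\kappa_2(H_n(s,t)) = t_n$ holds exactly, so the fully standardized variable satisfies
\[
\frac{H_n(s,t)-\e{H_n(s,t)}}{\sqrt{\var{H_n(s,t)}}} = \frac{Y_n}{\sqrt{t_n}}.
\]
Likewise, multiplying the threshold in part~(3) by $(p_n/n)^{1/3}$ shows that the event $\{H_n(s,t)-\e{H_n(s,t)} \ge \tfrac{n^{1/3}t_n}{p_n^{1/3}}x\}$ is literally the event $\{Y_n \ge t_n x\}$, which is exactly the form in which the precise large deviations of \cite{femeni2016} are phrased.

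For part~(1), I would apply the Berry--Esseen bound for mod-Gaussian sequences (Proposition~4.1.1 in \cite{femeni2016}) to $Y_n/\sqrt{t_n}$; since the limiting function is a genuine cubic exponential the resulting rate is $o(t_n^{-1/2})$, and substituting $t_n \sim n^{1/3}p_n^{2/3}\tfrac{s^2}{2}\int_0^t(\tfrac{t-x}{1-x})^2\,dx$ from \eqref{speed_tn} turns this into $o(n^{-1/6}p_n^{-1/3})$. For part~(2), Theorem~4.3.1 supplies the normal approximation up to the first correction in the moderate regime; the window $x_n=o(t_n^{1/6})=o(n^{1/18}p_n^{1/9})$ is exactly the range in which the cubic term of $\psi$ remains negligible. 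For part~(3), Theorem~4.2.1 yields
\[
\p{Y_n \ge t_n x} = \frac{\psi(x)}{x\sqrt{2\pi t_n}}\,e^{-t_n x^2/2}(1+o(1)),\qquad x>0,
\]
and, using the translation of the event above together with $\psi(x)=\exp(-x^3\tfrac{s^2}{6}\int_0^t(\tfrac{t-y}{1-y})^3\,dy)$, this is precisely the claimed tail asymptotic; the case $x<0$ follows from the symmetric version of the same theorem.

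The substance of the corollary therefore lies entirely in Theorem~\ref{mod_phi_conv}, and the remaining difficulty is purely organisational rather than analytic. The main point to be careful about is verifying that the hypotheses of each cited result (convergence on a strip around $0$, $\psi$ non-vanishing and analytic, $t_n\to\infty$) are indeed delivered by Theorem~\ref{mod_phi_conv}, and then carrying the powers of $n$ and $p_n$ cleanly through the substitution $t_n \sim n^{1/3}p_n^{2/3}(\cdots)$ to obtain the explicit rates stated in parts~(1)--(3).
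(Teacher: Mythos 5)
Your proposal is correct and follows exactly the route the paper takes: the paper's entire justification for Corollary~\ref{cormodphi} is the one-line remark that the three statements follow from Proposition~4.1.1, Theorem~4.3.1 and Theorem~4.2.1 of \cite{femeni2016} applied to the mod-Gaussian convergence of Theorem~\ref{mod_phi_conv}. Your additional bookkeeping (the identity $\var{Y_n}=t_n$ and the translation of the event in part~(3) into $\{Y_n\ge t_n x\}$) is accurate and in fact more explicit than what the paper records.
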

     In  Part (3)  of Corollary \ref{cormodphi}, the speed of the large deviation bound is $t_n \sim C n^{1/3} p_n^{2/3}$ (c.f. formula (\ref{speed_tn})) and $H_n(s, t)$ is rescaled by $\frac{n^{1/3} t_n}{p_n^{1/3}} \sim C n^{2/3} p_n^{1/3}$. This rescaling correponds to  a moderate deviation principle. Using the Gärtner-Ellis theorem we can strengthen the large deviation principle to a speed of $n p_n^2$ and a rescaling with $n p_n$ [see Theorem \ref{large_deviation} below]. Before we do this,
     we will provide a more general version of a moderate deviation principle for the sequence $H_n(s, t)$. 
          This result contains part (3) of Corollary \ref{cormodphi} as a 
           special case (using  $a_n = n^{2/3} p_n^{1/3}$). However, the  latter provides  more precise information about the limit.
    
    \begin{thm}[Moderate deviations] \label{moderate_deviations}
        Let $s, t \in (0, 1]$ be fixed and let $a_n$ be a sequence with  $\sqrt{n} = o(a_n)$ and $a_n = o(n p_n)$. Then
        \begin{align*}
            X_n = \frac{H_n(s, t) - \e{H_n(s, t)}}{a_n}
        \end{align*}
        satisfies a large deviation principle with speed $b_n = \frac{a_n^2}{n}$ and good rate function
        \begin{align*}
            I(x) = x^2 \Big (s^2 \int_0^t \Big (\frac{t - y}{1 - y}\Big )^2 \, dy\Big)^{-1}
        \end{align*}
    \end{thm}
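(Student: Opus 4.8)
The plan is to apply the Gärtner–Ellis theorem, reducing the problem to computing the normalized limiting cumulant generating function of $X_n$ and then taking its Legendre–Fenchel transform. Throughout write $A := \frac{s^2}{4}\int_0^t\big(\frac{t-y}{1-y}\big)^2\,dy$, which is strictly positive for $s,t\in(0,1]$. First note that the assumption $\sqrt n=o(a_n)$ forces the speed $b_n=a_n^2/n\to\infty$, so that we are in a genuine (moderate) large deviation regime, situated strictly between the central limit scale $\sqrt n$ and the law of large numbers scale $np_n$.

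For fixed $\lambda\in\mathbb{R}$ we have $b_n\lambda X_n=\frac{a_n\lambda}{n}\big(H_n(s,t)-\e{H_n(s,t)}\big)$, so by the cumulant expansion of the centered log-moment generating function,
\begin{align*}
\frac{1}{b_n}\log\e{\exp(b_n\lambda X_n)}=\frac{n}{a_n^2}\sum_{m=2}^\infty\frac{1}{m!}\Big(\frac{a_n\lambda}{n}\Big)^m\kappa_m(H_n(s,t)).
\end{align*}
Since $H_n(s,t)$ is a sum of four independent blocks of the type appearing in Lemma~\ref{lemma_cumulant1} and Lemma~\ref{lemma_cumulant2}, cumulants add and one obtains a bound of the form $|\kappa_m(H_n(s,t))|\le C(m+1)!\,4^m\,n\,p_n^{2-m}$. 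Consequently the general term above is dominated by $16C\lambda^2(m+1)\big(\frac{4|\lambda|a_n}{np_n}\big)^{m-2}$; because $a_n=o(np_n)$ the ratio $\frac{4|\lambda|a_n}{np_n}\to0$, which at once guarantees convergence of the series for large $n$ and hence existence of the moment generating function on the relevant range.

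It remains to identify the limit. The term $m=2$ equals $\frac{\lambda^2}{2n}\kappa_2(H_n(s,t))$, and by Theorem~\ref{convergence_hn} (equivalently by the asymptotics of $t_n$ in \eqref{speed_tn}) we have $\frac{1}{n}\kappa_2(H_n(s,t))\to\frac{s^2}{2}\int_0^t\big(\frac{t-x}{1-x}\big)^2\,dx=2A$, so this term tends to $A\lambda^2$. For the tail, the bound above yields
\begin{align*}
\Big|\,\frac{n}{a_n^2}\sum_{m=3}^\infty\frac{1}{m!}\Big(\frac{a_n\lambda}{n}\Big)^m\kappa_m(H_n(s,t))\Big|\le16C\lambda^2\sum_{m=3}^\infty(m+1)\Big(\frac{4|\lambda|a_n}{np_n}\Big)^{m-2}=O\Big(\frac{a_n}{np_n}\Big)\xrightarrow{n\to\infty}0.
\end{align*}
Hence $\Lambda(\lambda):=\lim_{n\to\infty}\frac{1}{b_n}\log\e{\exp(b_n\lambda X_n)}=A\lambda^2$ for every $\lambda\in\mathbb{R}$.

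Finally, $\Lambda(\lambda)=A\lambda^2$ is finite, differentiable and convex on all of $\mathbb{R}$, hence lower semicontinuous and essentially smooth, so the Gärtner–Ellis theorem applies and $X_n$ satisfies a large deviation principle with speed $b_n$ and good rate function $I=\Lambda^*$. The Legendre–Fenchel transform $I(x)=\sup_{\lambda}(\lambda x-A\lambda^2)$ is attained at $\lambda=x/(2A)$ and equals $\frac{x^2}{4A}=x^2\big(s^2\int_0^t(\frac{t-y}{1-y})^2\,dy\big)^{-1}$, which is the asserted rate function. The main obstacle is purely technical: controlling the tail of the cumulant series after the $b_n$-normalization, and it is precisely the moderate window $\sqrt n=o(a_n)=o(np_n)$ that simultaneously drives $b_n\to\infty$ while sending $\frac{a_n}{np_n}\to0$, so that only the Gaussian ($m=2$) contribution survives in the limit.
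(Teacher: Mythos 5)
Your proposal is correct and follows essentially the same route as the paper: apply the G\"artner--Ellis theorem, expand $\tfrac{1}{b_n}\log\ex[\exp(b_n\lambda X_n)]$ in cumulants, use the bounds $|\kappa_m(H_n(s,t))|\le C(m+1)!\,4^m n p_n^{2-m}$ from Lemma~\ref{lemma_cumulant1} and Lemma~\ref{lemma_cumulant2} to kill all terms with $m\ge 3$ via $a_n=o(np_n)$, identify the surviving $m=2$ term with $\tfrac{\lambda^2 s^2}{4}\int_0^t\bigl(\tfrac{t-y}{1-y}\bigr)^2dy$, and take the Legendre--Fenchel transform. The only (cosmetic) difference is that you cite the variance asymptotics via Theorem~\ref{convergence_hn}/\eqref{speed_tn} rather than re-deriving $\kappa_2(H_n(s,t))/n$ directly from the two cumulant lemmas, which is equivalent.
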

    \begin{proof}
        We will use  the Gärtner-Ellis theorem [see Theorem 2.3.6 in \cite{demzeit2009} and  the subsequent remark].    From Lemma \ref{lemma_cumulant1} and Lemma \ref{lemma_cumulant2} it follows 
        \begin{align*}
            \frac{1}{b_n}\log \e{\exp(b_n x X_n/a_n)} =& \sum \limits_{j = 2}^\infty \frac{x^j b_n^{j - 1}\kappa_j(X_n)}{j!a_n^j} 
            ={} \frac{x^2}{2n} \kappa_2(X_j) + O\Big (\sum \limits_{j = 3}^\infty \frac{x^j a_n^{j - 2}}{j!} (j + 1)! \frac{n}{p_n^{j-2 }} \Big ) \\
            ={}& \frac{x^2}{2n} \Big (\int_0^t \Big (\frac{t - y}{1 - y}\Big )^2 \, dy + o(1)\Big ) n \frac{(\lfloor p_n s \rfloor - 1) \lfloor p_n s \rfloor}{(p_n + 1)^2} \frac{1}{2} \\
            &+ \frac{x^2}{2n} O\Big (\frac{n}{p_n} + (\log(n) + 1) \Big )
            + O\Big (\sum \limits_{j = 3}^\infty x^j (j + 1) \Big (\frac{a_n}{n p_n}\Big )^{j - 2}\Big) \\
            ={}& x^2 \frac{s^2}{4} \int_0^t \Big (\frac{t - y}{1 - y}\Big )^2 \, dy + o(1).
        \end{align*}
        Since $I(x)$ is the Fenchel-Legendre transform of the limit on the right hand side of the last equation,      
                the Gärtner-Ellis theorem yields the desired result.
    \end{proof}

    The large deviation principle in Theorem \ref{moderate_deviations} is called a moderate deviation principle, because the scale at which the deviations occur is between the scaling of a central limit theorem and the scale of a law of large numbers. Indeed, at scale $\sqrt{n}$ the sequence $H_n(s, t) - \e{H_n(s, t)}$ is asymptotically Gaussian by Theorem \ref{convergence_hn} and at scale $n p_n$ the sequence $H_n(s, t)$ satisfies a law of large numbers by Theorem~\ref{thm_lln}.

    \begin{thm}[Large deviations]\label{large_deviation}
      The sequence $\frac{H_n(s, t)}{n p_n}$ satisfies a large deviation principle with speed $n p_n^2$ and good rate function
        \begin{align*}
            \Lambda^*(x) = \sup\limits_{\lambda > -\frac{1}{t}} \Big (\lambda x + \frac{s^2}{2} \int_0^t \log\Big  (1 + \lambda \frac{t - y}{1 - y}\Big  ) \, dy\Big ).
        \end{align*}
      In particular   the sequence $\frac{H_n(s, 1)}{n p_n}$ satisfies a large deviation principle with speed $n p_n^2$ and good rate function
        \begin{align*}
            \Lambda^*(x) = 
            \begin{cases}
                -(x + s^2/2) + s^2 \log(s) + \frac{s^2}{2} \log(-2x) & x < 0 \\
                \infty & x \ge 0
            \end{cases}.
        \end{align*}
    \end{thm}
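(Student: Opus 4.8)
The plan is to apply the Gärtner--Ellis theorem [see Theorem~2.3.6 in \cite{demzeit2009}], exactly as in the proof of Theorem~\ref{moderate_deviations} but now at the coarsest scale. Writing $Y_n = H_n(s,t)/(np_n)$ and aiming at speed $v_n = np_n^2$, I would first establish that the limiting scaled cumulant generating function
\[
\Lambda(\lambda) = \lim_{n\to\infty} \frac{1}{np_n^2}\log \e{\exp\big(\lambda p_n H_n(s,t)\big)}
\]
exists, is finite on $(-1/t,\infty)$, equals $+\infty$ for $\lambda \le -1/t$, and is given by $\Lambda(\lambda) = -\frac{s^2}{2}\int_0^t \log\big(1+\lambda\frac{t-y}{1-y}\big)\,dy$. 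Once this is done the announced rate function is precisely the Fenchel--Legendre transform $\Lambda^*(x) = \sup_\lambda(\lambda x - \Lambda(\lambda))$, and the theorem follows from Gärtner--Ellis provided $\Lambda$ is essentially smooth.

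To compute $\Lambda$ I would use the decomposition $H_n = S_n + S_n' + T_n + T_n'$ underlying Lemmas~\ref{lemma_cumulant1} and~\ref{lemma_cumulant2}, together with independence, giving $\log \e{\exp(\lambda p_n H_n)} = \sum_{m\ge 1}\frac{(\lambda p_n)^m}{m!}\big(\kappa_m(S_n)+\kappa_m(S_n')+\kappa_m(T_n)+\kappa_m(T_n')\big)$. For the $r$-blocks the two-sided estimate of Lemma~\ref{lemma_cumulant1} yields $\frac{(\lambda p_n)^m}{m!\,np_n^2}\kappa_m(S_n) \to \frac{(-\lambda)^m}{m}\frac{s^2}{4}\int_0^t(\frac{t-y}{1-y})^m\,dy$, and summing the power series $\sum_m \frac{(-\lambda)^m}{m}u^m = -\log(1+\lambda u)$ with $u = \frac{t-y}{1-y}$ produces $-\frac{s^2}{4}\int_0^t\log(1+\lambda\frac{t-y}{1-y})\,dy$; the identical contribution of $S_n'$ doubles this. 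For the $p$-blocks the sharper bound $|\kappa_m(T_n)|\le 6\cdot 4^m(m+1)!\,p_n^{-m}(np_n+(\log n+1)p_n^2)$ of Lemma~\ref{lemma_cumulant2} shows that each scaled summand is $O\big(\frac1{p_n}+\frac{\log n}{n}\big)$, so $T_n$ and $T_n'$ do not contribute to $\Lambda$ in the limit.

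The domain $(-1/t,\infty)$ should be read off directly from the Beta moment formula $\e{X^\theta} = \Gamma(a+\theta)\Gamma(a+b)/(\Gamma(a)\Gamma(a+b+\theta))$: the moment $\e{r_{2n+1,2i,j}^{\lambda p_n w}}$ is finite iff its first shape parameter stays positive, which after rescaling $i\approx ny$, $w\approx n(t-y)$ becomes $1+\lambda\frac{t-y}{1-y}>0$ for every $y\in[0,t]$, i.e. $\lambda>-1/t$; for $\lambda\le -1/t$ the moment diverges and $\Lambda=+\infty$. Since $\Lambda(0)=0$ and $0$ lies in the interior of $(-1/t,\infty)$, it remains to check essential smoothness, which here is steepness at the left endpoint: differentiating under the integral gives $\Lambda'(\lambda) = -\frac{s^2}{2}\int_0^t\frac{(t-y)/(1-y)}{1+\lambda(t-y)/(1-y)}\,dy$, and as $\lambda\downarrow -1/t$ the integrand blows up near $y=0$ (where $\frac{t-y}{1-y}\to t$ and $1+\lambda t\to 0$), so $|\Lambda'(\lambda)|\to\infty$; combined with real-analyticity on the open interval this gives essential smoothness, and Gärtner--Ellis applies.

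The explicit statement for $t=1$ then follows because $\frac{t-y}{1-y}\equiv 1$, whence $\Lambda(\lambda) = -\frac{s^2}{2}\log(1+\lambda)$ on $(-1,\infty)$; a one-line optimization of $\lambda x + \frac{s^2}{2}\log(1+\lambda)$ (optimal $\lambda^* = -\frac{s^2}{2x}-1$, admissible exactly when $x<0$, and $\sup=\infty$ for $x\ge 0$) yields the closed form. I expect the main obstacle to be the rigorous passage from the formal cumulant series to $\Lambda(\lambda)$ on the \emph{whole} interval $(-1/t,\infty)$: the series representation is only justified for small $|\lambda|$ (the lemma bounds control it for $|\lambda|<1/t$), so for the remaining $\lambda$ one must obtain the limit either by a direct Stirling/loggamma asymptotic analysis of the Gamma-ratios in the exact moment generating function, uniformly over the $(i,j)$-summation, or by analytic continuation, while simultaneously controlling the vanishing of the $p$-block contributions uniformly up to the boundary.
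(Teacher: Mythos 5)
Your proposal is correct and follows essentially the same route as the paper: the G\"artner--Ellis theorem combined with the cumulant estimates of Lemma~\ref{lemma_cumulant1} and Lemma~\ref{lemma_cumulant2} (so that the $T_n$, $T_n'$ contributions are negligible at speed $np_n^2$), term-by-term summation of $\sum_{m\ge 1}\frac{(-\lambda u)^m}{m}=-\log(1+\lambda u)$ with $u=\frac{t-y}{1-y}$, and a one-line Fenchel--Legendre computation for $t=1$. The convergence-radius issue you flag at the end is genuine, but the paper's own proof does not address it either --- it performs exactly the formal passage to the limit in the cumulant series that you describe (whose error bounds only control $|\lambda|<1/4$) --- so your version, with its additional discussion of the domain $(-1/t,\infty)$ and of steepness at the left endpoint, is if anything more careful than the published argument.
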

    \begin{proof}
      The Gärtner-Ellis theorem, Lemma \ref{lemma_cumulant1} and Lemma~\ref{lemma_cumulant2} yield
        \begin{align*}
            \Lambda(x) = & \limsup \limits_{n \to \infty} \frac{1}{n p_n^2} \log \e{\exp(p_n x H_n(s, t))} 
            ={} \limsup \limits_{n \to \infty} \sum \limits_{j = 1}^\infty \frac{x^j}{j!} p_n^{j - 2} n^{-1} \kappa_j(H_n(s, t)) \\
            ={}& \limsup \limits_{n \to \infty} \sum \limits_{j = 1}^\infty\Big\{ \frac{1}{2} \frac{(-x)^j}{j} (s^2 + o(1)) \Big  (\int_0^t \Big  (\frac{t - y}{1 - y}\Big  )^j \, dy + o(1)\Big ) \\
            &\qquad \qquad +O\Big  (4^j x^j (j + 1) \Big  (\frac{1}{p_n} + \frac{\log(n) + 1}{n}\Big  )\Big  )\Big\} \\
            ={}& \frac{s^2}{2} \int_0^t \sum \limits_{j = 1}^\infty \Big (-x \frac{t - y}{1 - y}\Big  )^j \frac{1}{j} \, dy 
            ={} -\frac{s^2}{2} \int_0^t \log\Big  (1 + x \frac{t - y}{1 - y}\Big  ) \, dy.
        \end{align*}
        The remaining part follows by a straightforward calculation of 
      \begin{align*}
            \Lambda^*(x) = \sup \limits_{\lambda > -1}  \Big\{ \lambda x + \frac{s^2}{2} \log(1 + \lambda) \Big\}.
        \end{align*}
         \end{proof}

    \begin{rem}   {\rm 
    Note that   the mod-Gaussian convergence provides  moderate deviation principles  while the application of the  Gärtner-Ellis theorem yields the full large deviation principle. 
    This is most likely due to the fact that we used a rather simple approximation when proving Theorem \ref{mod_phi_conv}, based on  upper bounds for the cumulants
    of order larger than three.  In contrast to this, Theorem \ref{large_deviation} uses all the cumulants in calculating the limiting function. 
        This situation is similar to Example 2.1.3 in \cite{femeni2016}, where  a sequence is proven to converge mod-Poisson as well as mod-Gaussian, and
         the results obtained from the mod-Poisson convergence are much stronger than the ones obtained from mod-Gaussian convergence. \\
        For $t = 1$ the limit of the cumulant generating function $\Lambda(x)$ in Theorem \ref{large_deviation} is $\Lambda(x) = -\frac{s^2}{2} \log(1 + x)$. This is the cumulant-generating function of a negative $\gamma(\tfrac{s^2}{2}, 1)$-distribution. A comparison with Theorem 4.2.1 from \cite{femeni2016} suggests the conjecture 
         that $C \cdot (p_n + O(1))H_n(s, t)$ converges mod-$\gamma$ with a speed $t_n \sim C n p_n^2$, as this would imply the large deviation principle in Theorem \ref{large_deviation}.
        }
    \end{rem}

\bigskip

{\bf Acknowledgements.} 
The work of H. Dette and D. Tomecki  was partially supported by the Deutsche
Forschungsgemeinschaft (DFG Research Unit 1735, DE 502/26-2, RTG 2131). The authors would like to thank Christoph Thäle for some helpful discussion about mod-$\phi$-convergence.

    \addcontentsline{toc}{section}{References}
    \bibliography{sources}

\appendix

\section{Technical Details } 
\subsection{Proof of Theorem \ref{limit_thm}} \label{sec6}

  We begin with some general remarks regarding the assumptions made in Theorem \ref{limit_thm}.
Conditions \ref{cond_indep}, \ref{cond_center}, \ref{cond_var_bound} and \ref{cond_var_conv} are 
rather common and  necessary to apply the Lindeberg central limit theorem.
     \ref{cond_moments} is used to prove the Lindeberg condition and in combination with \ref{cond_cov} it
     also allows us to bound the increments of the process via the Markov inequality. The latter condition also guarantees that a continuous 
     version of the limiting process exists.  Condition \ref{cond_lattice} essentially means that
     the process 
      $Z_n$ is constant on all rectangular sets of the form $$\prod \limits_{j = 1}^k [a_j h_n^{(j)}, (a_j + 1)h_n^{(j)}),$$
       where $a_j$ are integers satisfying $0 \le a_j \le \frac{1}{h_n^{(j)}}$.

       The proof is split into several parts. First, we prove in Lemma \ref{fin_conv} that the finite-dimensional distributions of 
       $Z_n$ converge weakly to a centered Gaussian distribution with covariance kernel $f$.   This also implies that $f$ is 
       nonnegative definite, and therefore we can conclude that a centered Gaussian process with covariance kernel $f$ exists. 
       We then show in Theorem \ref{cont_proc} that this process can be chosen to have continuous paths.     
       Finally, we show in Theorem \ref{as_equi} that the process $Z_n$ is asymptotically tight. The assertion now 
       follows from Theorem 1.5.4 in \cite{vanwell1996}.

    \begin{lemma}[Convergence of finite-dimensional distributions] \label{fin_conv}
        For any $t_1, \ldots, t_m \in [0, 1]^k$ we have
        \begin{align*}
        Z_n^\star =     (Z_n(t_1), \ldots, Z_n(t_m)) \xrightarrow{d} \mathcal{N}(0, \Sigma),
        \end{align*}
        where the elements of $\Sigma \in \mathbb{R}^{m \times m}$ are given by
$
            \Sigma_{i, j} = f(t_i, t_j).
$
    \end{lemma}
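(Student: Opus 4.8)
The plan is to reduce the $m$-dimensional statement to a one-dimensional central limit theorem via the Cramér--Wold device and then to verify a Lyapunov condition, the moment bound \ref{cond_moments} being tailored precisely for this purpose. Fix an arbitrary $\lambda = (\lambda_1, \ldots, \lambda_m)^t \in \R^m$ and write the corresponding linear combination as a sum of independent summands,
\begin{align*}
\sum_{l = 1}^m \lambda_l Z_n(t_l) = \sum_{i \in T_n} Y_n(i), \qquad Y_n(i) := a_n(i) X_n(i), \quad a_n(i) := \sum_{l = 1}^m \lambda_l g_n(i, t_l).
\end{align*}
By \ref{cond_indep} the $Y_n(i)$ are independent and by \ref{cond_center} they are centered, so it suffices to establish $\sum_{i \in T_n} Y_n(i) \xrightarrow{d} \mathcal{N}(0, \lambda^t \Sigma \lambda)$, whence the Cramér--Wold theorem yields the asserted joint convergence.

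Expanding the square and applying \ref{cond_var_conv} to each pair $(t_l, t_{l'})$ yields
\begin{align*}
\sigma_n^2 := \var{\sum_{i \in T_n} Y_n(i)} = \sum_{l, l' = 1}^m \lambda_l \lambda_{l'} \sum_{i \in T_n} g_n(i, t_l) g_n(i, t_{l'}) \var{X_n(i)} \xrightarrow{n \to \infty} \lambda^t \Sigma \lambda.
\end{align*}
If $\lambda^t \Sigma \lambda = 0$ the target law is degenerate and convergence to the point mass at $0$ is immediate from Chebyshev's inequality, so I may assume $\lambda^t \Sigma \lambda > 0$.

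The crux is the Lyapunov condition of order $2k + 2$ (which implies the Lindeberg condition). Setting $v_n(i) := a_n(i)^2 \var{X_n(i)}$ and using that $2k + 2$ is even together with \ref{cond_moments}, I would estimate
\begin{align*}
\sum_{i \in T_n} \e{|Y_n(i)|^{2k + 2}} = \sum_{i \in T_n} a_n(i)^{2k + 2} \e{X_n^{2k + 2}(i)} \le C \sum_{i \in T_n} \big(a_n(i)^2 \var{X_n(i)}\big)^{k + 1} = C \sum_{i \in T_n} v_n(i)^{k + 1}.
\end{align*}
Here $\sum_{i \in T_n} v_n(i) = \sigma_n^2$ stays bounded, while the elementary inequality $a_n(i)^2 \le m \sum_{l} \lambda_l^2 g_n(i, t_l)^2$ combined with \ref{cond_var_bound} applied for each fixed $t_l$ gives $\sup_{i \in T_n} v_n(i) \to 0$. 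Consequently
\begin{align*}
\frac{1}{\sigma_n^{2k + 2}} \sum_{i \in T_n} \e{|Y_n(i)|^{2k + 2}} \le \frac{C}{\sigma_n^{2k + 2}} \Big(\sup_{i \in T_n} v_n(i)\Big)^k \sum_{i \in T_n} v_n(i) \xrightarrow{n \to \infty} 0.
\end{align*}
The Lyapunov central limit theorem for triangular arrays then gives $\sigma_n^{-1} \sum_{i \in T_n} Y_n(i) \xrightarrow{d} \mathcal{N}(0, 1)$, and since $\sigma_n^2 \to \lambda^t \Sigma \lambda$, Slutsky's lemma closes the one-dimensional claim.

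I expect the Lyapunov verification to be the only real obstacle, and it is exactly the unusual form $\e{X_n^{2k+2}(i)} \le C\, \e{X_n^2(i)}^{k+1}$ of \ref{cond_moments} that makes it go through: it lets the $(2k+2)$-th absolute moment sum factorise into $\sum_{i} v_n(i)^{k+1}$, which in turn splits into a vanishing supremum (from \ref{cond_var_bound}) times the bounded total variance (from \ref{cond_var_conv}). The degenerate direction $\lambda^t \Sigma \lambda = 0$ is the only side case and is dispatched separately above.
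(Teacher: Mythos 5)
Your proof is correct and follows essentially the same route as the paper: Cram\'er--Wold reduction, convergence of the variance via \ref{cond_var_conv}, and a higher-moment verification of the Lindeberg-type condition in which \ref{cond_moments} converts the high moment into a power of the variance, which is then split into a vanishing supremum (from \ref{cond_var_bound}) times the bounded total variance. The only cosmetic differences are that the paper checks the Lindeberg condition through fourth moments (obtained from \ref{cond_moments} by Jensen) rather than the Lyapunov condition of order $2k+2$, and that you treat the degenerate case $\lambda^t\Sigma\lambda = 0$ explicitly.
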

    \begin{proof}
        By the Cram\'{e}r-Wold theorem it is sufficient to prove the weak convergence
        \begin{align*}
            c^t Z_n^\star \xrightarrow{d} \mathcal{N}(0, c^t \Sigma c)
        \end{align*}
        for any arbitrary vector $c = (c_1, \ldots, c_m)^t \in \mathbb{R}^m$. For this purpose we use the Lindeberg central limit theorem in the  the form of Theorem 5.12 of \cite{kall2002}. The first step is showing that
        \begin{align*}
            \var{c^t Z_n^\star} \to c^t \Sigma c,
        \end{align*}
        which is equivalent to proving $\cov{Z_n(s)}{Z_n(t)} \to f(s, t)$ for all $s, t \in [0, 1]^k$. This follows directly from assumptions \ref{cond_indep} and \ref{cond_var_conv} and the following calculation
        \begin{align*}
            \cov{Z_n(s)}{Z_n(t)} &= \sum \limits_{i, j \in T_n} g_n(i, s) g_n(j, t) \cov{X_n(i)}{X_n(j)} \\
            &=\sum \limits_{i \in T_n} g_n(i, s) g_n(i, t) \var{X_n(i)} \to f(s, t).
        \end{align*}   
        To prove the Lindeberg-condition, observe the inequality
        \begin{align*}
            \e{X^2 I\{|X| > \varepsilon\}} \le \e{\frac{X^4}{\varepsilon^2} I\{|X| > \varepsilon\}} \le \e{X^4}\varepsilon^{-2},
        \end{align*}
       and note that        \begin{align*}
            \e{X_n^4(i)} \le \e{X_n^{2k + 2}(i)}^{\frac{2}{k + 1}} \le C^{\frac{2}{k + 1}} \e{X_n^2(i)}^2 \le C \e{X_n^2(i)}^2
        \end{align*}
         by assumption \ref{cond_moments} and Jensen's inequality.
        Define $ c^\star = \max \{|c_1|, \ldots, |c_m|\}$
        and observe that
        \begin{align*}
            c^tZ_n^\star = \sum \limits_{j = 1}^m c_j Z_n(t_j) = \sum \limits_{i \in T_n} \Big (\sum \limits_{j = 1}^m c_j g_n(i, t_j)\Big ) X_n(i).
        \end{align*}
 A further application of Jensen's inequality and  \ref{cond_var_bound}  yield
        \begin{align*}
            &\sum \limits_{i \in T_n} \Big (\sum \limits_{j = 1}^m c_j g_n(i, t_j)\Big)^4 \e{X_n^4(i)} \le \sum \limits_{i \in T_n} m^3 \sum \limits_{j = 1}^m c_j^4 g_n^4(i, t_j) \e{X_n^4(i)}\\
            \le{}&C m^3(c^\star)^4\sum \limits_{j = 1}^m\Big\{\Big(\sup \limits_{i \in T_n} g_n^2(i, t_j)\var{X_n(i)}\Big) \Big(\sum \limits_{i \in T_n} 
            g_n^2(i, t_j) \var{X_n(i)}\Big)\Big\} \\
            \le{}& Cm^3(c^\star)^4 \sum \limits_{j = 1}^m \Big(\sup \limits_{i \in T_n} g_n^2(i, t_j)\var{X_n(i)}\Big) \left(f(t_j, t_j)+o(1)\right) \xrightarrow{n \to \infty} 0,
        \end{align*}
        which proves the Lindeberg-condition.
    \end{proof}
    
    \begin{thm}[Continuity of the limit process] \label{cont_proc}
        There exists a continuous, centered Gaussian process with covariance kernel $f$.
    \end{thm}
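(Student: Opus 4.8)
The plan is to first invoke Lemma~\ref{fin_conv} to secure the existence of a centered Gaussian process with kernel $f$, and then to upgrade it to a continuous modification by the Kolmogorov--Chentsov criterion, exploiting that Gaussian increments have all their moments controlled by powers of the variance.

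First I would observe that, by Lemma~\ref{fin_conv}, for every finite collection $t_1,\ldots,t_m$ the matrix $(f(t_i,t_j))_{i,j}$ is the limit of the covariance matrices of $(Z_n(t_1),\ldots,Z_n(t_m))$ and is therefore symmetric and nonnegative definite. Hence $f$ is a valid covariance kernel, and a centered Gaussian process $\{\mathcal{G}(t)\}_{t\in[0,1]^k}$ with $\cov{\mathcal{G}(s)}{\mathcal{G}(t)}=f(s,t)$ exists on some probability space.

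The key step is to transfer the increment bound \ref{cond_cov} to the limit. Since $\var{Z_n(u)-Z_n(v)} = \cov{Z_n(u)}{Z_n(u)} - 2\cov{Z_n(u)}{Z_n(v)} + \cov{Z_n(v)}{Z_n(v)}$, condition \ref{cond_var_conv} gives
\[
\var{Z_n(u)-Z_n(v)} \xrightarrow{n\to\infty} f(u,u)-2f(u,v)+f(v,v) = \var{\mathcal{G}(u)-\mathcal{G}(v)}.
\]
For two points differing only in their $j$-th coordinate, \ref{cond_cov} reads $\var{Z_n(u)-Z_n(v)} \le C(|u_j-v_j|+h_n^{(j)})$; letting $n\to\infty$ and using $h_n^{(j)}\to 0$ yields $\var{\mathcal{G}(u)-\mathcal{G}(v)} \le C|u_j-v_j|$. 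For arbitrary $s,t\in[0,1]^k$ I would then telescope through the intermediate points obtained by switching one coordinate at a time and apply $\var{\sum_{j=1}^k A_j}\le k\sum_{j=1}^k\var{A_j}$ (Cauchy--Schwarz applied to the covariances), which gives the modulus-of-continuity bound
\[
\var{\mathcal{G}(s)-\mathcal{G}(t)} \le Ck\sum_{j=1}^k|s_j-t_j| = Ck\,\|s-t\|_1.
\]

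Since $\mathcal{G}(s)-\mathcal{G}(t)$ is a centered Gaussian variable, its even moments satisfy $\e{|\mathcal{G}(s)-\mathcal{G}(t)|^{2p}} = (2p-1)!!\,\var{\mathcal{G}(s)-\mathcal{G}(t)}^{\,p}$, so the variance bound upgrades for every integer $p\ge 1$ to
\[
\e{|\mathcal{G}(s)-\mathcal{G}(t)|^{2p}} \le (2p-1)!!\,(Ck)^p\,\|s-t\|_1^{\,p}.
\]
Taking $p=k+1$ and using the equivalence of norms on $\mathbb{R}^k$ produces a bound of the form $\e{|\mathcal{G}(s)-\mathcal{G}(t)|^{2(k+1)}}\le C'\|s-t\|^{k+1}$, whose exponent exceeds the dimension $k$; the Kolmogorov--Chentsov continuity theorem [see e.g.\ \cite{kall2002}] then supplies a modification with continuous sample paths. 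I expect the main obstacle to be precisely the passage of the single-coordinate bound \ref{cond_cov} to the limiting kernel and then to general increments; once the linear-in-$\|s-t\|_1$ variance estimate is in hand, Gaussianity automatically furnishes moments of every order, so Kolmogorov--Chentsov applies with room to spare.
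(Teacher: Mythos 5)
Your proposal is correct and follows essentially the same route as the paper: existence of the Gaussian process from the nonnegative definiteness of $f$ (via Lemma~\ref{fin_conv}), transfer of the single-coordinate variance bound \ref{cond_cov} to the limit using \ref{cond_var_conv}, a coordinate-by-coordinate telescoping of the increment, the Gaussian moment identity to reach a $(2k+2)$-th moment bound of order $\|s-t\|^{k+1}$, and Kolmogorov--Chentsov. The only cosmetic difference is that you telescope the variance first and then take moments, whereas the paper telescopes the $(2k+2)$-th moment directly; both yield the same estimate.
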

    \begin{proof}
        Let $G$ be a centered Gaussian process with covariance function $f$. For arbitrary vectors $s = (s_1, \ldots, s_k), t = (t_1, \ldots, t_k) \in [0, 1]^k$ and $1 \le i \le k + 1$ define
        \begin{align*}
            M_i = (s_1, \ldots, s_{i - 1}, t_i, \ldots, t_k).
        \end{align*}
        From assumptions \ref{cond_var_conv} and \ref{cond_cov} we can conclude
        \begin{align*}
            \var{G(M_i) - G(M_{i + 1})} = \lim \limits_{n \to \infty} \var{Z_n(M_i) - Z_n(M_{i + 1})} \le C|t_i - s_i|.
        \end{align*}
        This yields
        \begin{align*}
            \e{(G(t) - G(s))^{2k + 2}}
            &\le{} k^{2k + 1} \sum \limits_{i = 1}^k \e{(G(M_i) - G(M_{i + 1}))^{2k + 2}} \\
            &={} k^{2k + 1} \sum \limits_{i = 1}^k (2k + 1)!! \var{G(M_i) - G(M_{i + 1})}^{k + 1}\\
            &\le{} k^{2k + 1} \sum \limits_{i = 1}^k (2k + 1)!!(C|t_i - s_i|)^{k + 1} \\
            &\le  (Ck^2)^{k + 1} (2k + 1)!! \cdot \|t - s\|_\infty^{k + 1},
        \end{align*}
     and  Theorem 3.23 in \cite{kall2002} implies the existence of  a continuous version of the process $G$ .
    \end{proof}
The proof of asymptotic tightness of $Z_n$ requires some preparations. Typically, the asymptotic tightness of a one-dimensional random process $H_n \in l^{\infty}([0, 1])$ is proven by showing a bound of the form
    \begin{align*}
        \e{(H_n(s) - H_n(t))^b} \le C|t-s|^a,
    \end{align*}
    where $a > 1$ and $b > 0$ are some parameters. Theorem 6 on p. 51 in \cite{shorwell2009} then yields the asymptotic tighness of $H_n$. However, since we are mostly interested in partial sum processes, such an inequality cannot hold. This is due to the discontinuity of partial sum processes at fixed points $\frac{1}{n}, \frac{2}{n}, \ldots, \frac{n}{n}$. We will therefore use a similar, but slightly more delicate argument generalizing  Theorem 6.2 in \cite{bill1971} to 
    more than one dimension.   Informally speaking, we  show 
     that an increment $|Z(t) - Z(s)|$ is small with high probability (w.h.p.), if $t$ and $s$ are close to each other and only differ in one coordinate. From this we deduce that the increments of $Z$ are ``simultaneously'' small in the sense that $\sup \limits_{s, t}|Z(t) - Z(s)|$ is small w.h.p..
    
   In order to achieve this, we use a chaining type argument and 
  define  a ``dyadic lattice'' on the  cube $[0, 1]^k$. Starting with  the $2^k$ vertices  $ \big  \{(i_1,\ldots ,i_k) |~ i_1,\ldots ,i_k \in \{0, 1\}  \big \}$ 
  we subdivide the lattice in each step, to gain the lattice  $ \big  \{(i_1,\ldots ,i_k) /2^{n} |~ i_1,\ldots ,i_k \in \{0, 1,  \ldots , 2^n\}  \big \}$ 
after $n$ steps. 
  Then an induction argument shows that the increments to the nearest neighbors \emph{within} a lattice of length $2^{-n}$ are small w.h.p..
      Using the assumptions on the increments of $Z$, we can find for all points $s, t$ in the $n$-th lattice points $S, T$ in the $(n - 1)$-th lattice, so that $|Z(s) - Z(S)|$ and $|Z(t) - Z(T)|$ are small w.h.p..  Summing up all  ``small increments'', we can see that all increments $|Z(s) - Z(t)|$ are ``simultaneously small'' w.h.p. for all dyadic rationals $s, t$. Using the right-continuity of $Z$ this can be strengthened to hold for all real numbers $s, t$.
    
    This argument is visualized in the following two graphics for the two-dimensional ($k = 2$) case.
    \begin{figure}[H]
        \newcommand{\tikzrescale}{.7}
        \newcommand{\arrowwidth}{.7}
        \centering
        \begin{minipage}{.48\textwidth}
            \centering
            \begin{tikzpicture}[scale=\tikzrescale]
                \draw[lightgray] (-.2,-.2) grid [xstep=1,ystep=1] (8.2,8.2);
                \draw (-.2,-.2) grid [xstep=2,ystep=2] (8.2,8.2);
                
                \draw[fill=gray] (2,3) circle (.12) node[anchor=south east] {$s$};
                \draw (2,2) circle (.1);
                \draw (2,4) circle (.1);
                
                \draw[fill=black] (5,7) circle (.12) node[anchor=south east] {$t$};
                \draw (4, 6) circle (.1);
                \draw (4, 8) circle (.1);
                \draw (6, 6) circle (.1);
                \draw (6, 8) circle (.1);
                
                \draw[dashed,line width=1, <->] (2.15, 3.15) -- (4.85, 6.85);
            \end{tikzpicture}
            \caption{\it Points $s$ and $t$ on the $n$-th lattice with feasible points $S$, $T$ on the $(n - 1)$-th lattice marked.}
        \end{minipage}\hfill
        \begin{minipage}{.48\textwidth}
            \centering
            \begin{tikzpicture}[scale=\tikzrescale]
                \draw[lightgray] (-.2,-.2) grid [xstep=2,ystep=2] (8.2,8.2);
                \draw (-.2,-.2) grid [xstep=4,ystep=4] (8.2,8.2);
                
                \draw[line width=\arrowwidth,->] (2, 3) .. controls (1.5, 2.8) and (1.5, 2.3) .. (1.9, 2.1);
                \draw[fill=gray] (2,3) circle (.12) node[anchor=south east]{$s$};
                \draw[fill=gray] (2,2) circle (.12) node[anchor=north east]{$S$};
                \draw (0,0) circle (.1);
                \draw (0,4) circle (.1);
                \draw (4,0) circle (.1);
                \draw (4,4) circle (.1);
                
                \draw[line width=\arrowwidth,->] (5,7) .. controls (5.7, 7.3) and (6.3, 6.7) .. (6.05, 6.15);
                \draw[fill=black] (5,7) circle (.12) node[anchor=south east]{$t$};
                \draw[fill=black] (6,6) circle (.12) node[anchor=north west]{$T$};
                \draw (4,8) circle (.1);
                \draw (8,8) circle (.1);
                \draw (8,4) circle (.1);
                
                \draw[dashed,line width=1, <->] (2.2, 2.2) -- (5.8, 5.8);
            \end{tikzpicture}
            \caption{\it Feasible points $S$ and $T$ were chosen. New feasible points for $S$ and $T$ on the \hbox{$(n - 2)$-th} lattice are marked.}
        \end{minipage}
    \end{figure}

      While the assumptions in the following theorem may seem technical, they can typically be proven
      by a simple application of  the Markov-inequality and estimates on the moments of $Z$.
    \begin{thm}[Global increments] \label{sup_ineq}
        Let $Z: [0, 1]^k \to \mathbb{R}$ be a process, which is right-continuous in every coordinate. Define for $1 \le i \le k$ and $s = (s_1, \ldots, s_k)$ the increment in the $i$-th coordinate by:
        \begin{align*}
            m^i(s_1, \ldots, s_k, r, t) = \min\{&|Z(s_1, \ldots, s_{i - 1}, t, s_{i + 1}, \ldots, s_k) - Z(s)|, \\
            &|Z(s_1, \ldots, s_{i - 1}, r, s_{i + 1}, \ldots, s_k) - Z(s)|\}.
        \end{align*}
        Assume that there exist constants $\gamma > 0$, $\delta > k$ such that for all $1 \le i \le k$ and $r \le s_i \le t$ the inequality
        \begin{align}
            \p{m^i(s_1, \ldots, s_k, r, t) > \lambda} \le C \lambda^{-\gamma} |t - r|^\delta \label{min_increment}
        \end{align}
        holds with a universal constant $C$. Further assume that there exists a function $\eta$ such that the inequality
        \begin{align}
              \pr \big(    {|Z(s) - Z(t)| > \varepsilon}  \big) \le \eta(\|t - s\|_\infty, \varepsilon) \label{inc_border}
        \end{align}
        is satisfied. Then we have 
        \begin{align*}
             \pr \Big(  {\sup\limits_{s, t \in [0, 1]^k} |Z(s) - Z(t)| > 4k\lambda} \Big)  \le C C' \lambda^{-\gamma} + 4^k\eta(1, \lambda),
        \end{align*}
        where $C'$ is a universal constant that only depends on $\gamma$, $\delta$ and $k$.
    \end{thm}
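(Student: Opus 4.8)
The plan is to run a multidimensional chaining argument on a dyadic lattice, generalising the one-dimensional construction behind Theorem~6.2 in \cite{bill1971}. For $n \ge 0$ let $\mathcal{L}_n = \{(i_1, \ldots, i_k)/2^n \mid 0 \le i_j \le 2^n\}$ denote the dyadic lattice of mesh $2^{-n}$, and fix thresholds $\lambda_n = c\,\lambda\, 2^{-\rho n}$, where $\rho$ is any number with $0 < \rho < (\delta - k)/\gamma$ (such a $\rho$ exists precisely because $\delta > k$) and $c$ is a normalising constant chosen so that $\sum_{n \ge 1} \lambda_n \le \lambda$. The idea is to bound every increment $|Z(s) - Z(t)|$ between dyadic points by chaining both $s$ and $t$ down to vertices of the cube, controlling each rounding step by $\lambda_n$, and then to compare the two vertices by means of \eqref{inc_border}. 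The passage from dyadic rationals to arbitrary points is deferred to the very end and handled through right-continuity.

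The heart of the argument is a per-level union bound. Call a point \emph{mixed of level $n$} if each of its coordinates lies in $\mathcal{L}_n \cup \mathcal{L}_{n-1}$, and for each such point $P$ and each coordinate $i$ at which $P$ sits strictly between its two bracketing $\mathcal{L}_{n-1}$-neighbours $r < t$ (so $|t - r| = 2^{-(n-1)}$) consider the minimal increment $m^i(P, r, t)$. By assumption \eqref{min_increment},
\begin{align*}
\pr\big(m^i(P, r, t) > \lambda_n\big) \le C\, \lambda_n^{-\gamma}\, (2^{-(n-1)})^\delta .
\end{align*}
The number of such pairs $(P, i)$ is at most $k\,(2^n + 1)^k \le C_k\, 2^{nk}$, so the bad event $B_n$, that some mixed point of level $n$ has a large minimal increment in some coordinate, satisfies
\begin{align*}
\pr(B_n) \le C_k\, 2^{nk} \cdot C\, \lambda_n^{-\gamma}\, 2^{-(n-1)\delta} \le C\, C'_k\, \lambda^{-\gamma}\, 2^{n(\rho\gamma + k - \delta)} .
\end{align*}
Since $\rho\gamma + k - \delta < 0$ by the choice of $\rho$, these probabilities form a convergent geometric series and $\sum_{n \ge 1} \pr(B_n) \le C\, C'\, \lambda^{-\gamma}$ with $C'$ depending only on $\gamma, \delta, k$. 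This step, where the exponential growth $2^{nk}$ of the lattice must be beaten by the decay $2^{-n\delta}$, is exactly where the hypothesis $\delta > k$ is indispensable, and it is the main obstacle of the proof.

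On the complementary event $\bigcap_n B_n^c$ the chaining goes through. Given a dyadic point $s \in \mathcal{L}_N$, I would round its coordinates down to $\mathcal{L}_{N-1}$ one at a time, always choosing in each coordinate the bracketing neighbour that realises the minimum $m^i$; since every intermediate point is mixed of level $N$ and $B_N$ does not occur, each of these $k$ steps changes $Z$ by at most $\lambda_N$. Iterating down to a vertex $s_0 \in \{0,1\}^k$ yields
\begin{align*}
|Z(s) - Z(s_0)| \le \sum_{n = 1}^{N} k\, \lambda_n \le k\lambda ,
\end{align*}
and likewise $|Z(t) - Z(t_0)| \le k\lambda$. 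Comparing the two vertices through a union bound over the at most $4^k$ ordered pairs, assumption \eqref{inc_border} together with $\|s_0 - t_0\|_\infty \le 1$ shows that some vertex pair exceeds $\lambda$ with probability at most $4^k \eta(1, \lambda)$. Hence, off an event of probability at most $C C' \lambda^{-\gamma} + 4^k \eta(1, \lambda)$, every dyadic increment obeys
\begin{align*}
|Z(s) - Z(t)| \le |Z(s) - Z(s_0)| + |Z(s_0) - Z(t_0)| + |Z(t_0) - Z(t)| \le 2k\lambda + \lambda \le 4k\lambda .
\end{align*}

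Finally I would upgrade this from dyadic rationals to all of $[0,1]^k$: for arbitrary $s, t$ choose dyadic $s^{(m)} \downarrow s$ and $t^{(m)} \downarrow t$ coordinatewise from the right, so that right-continuity gives $Z(s^{(m)}) \to Z(s)$ and $Z(t^{(m)}) \to Z(t)$ and therefore $|Z(s) - Z(t)| \le 4k\lambda$ on the same event. Consequently the event $\{\sup_{s,t \in [0,1]^k} |Z(s) - Z(t)| > 4k\lambda\}$ is contained in the union of the $B_n$ with the vertex-comparison failure, which yields the asserted bound.
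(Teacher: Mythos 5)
Your proposal is correct and follows essentially the same route as the paper: a dyadic chaining argument with per-level union bounds on the minimal increments (where $\delta>k$ beats the $2^{nk}$ lattice growth), coordinate-by-coordinate rounding to the coarser lattice choosing the better bracketing neighbour, a $4^k$ union bound over vertex pairs via \eqref{inc_border}, and a final right-continuity step. The only cosmetic difference is your geometric threshold sequence $\lambda_n \propto 2^{-\rho n}$ with $\rho<(\delta-k)/\gamma$ in place of the paper's universal choice $\theta_m=(m+1)^{-2}$; both make the level-wise series converge and keep $C'$ depending only on $\gamma,\delta,k$.
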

    \begin{proof}
        Let $\theta_0, \theta_1, \theta_2, \ldots$ be arbitrary positive numbers and consider the event
        \begin{align*}
            M =  \bigcap \limits_{m = 1}^\infty \bigcap \limits_{i_1,\ldots, i_k = 1}^{2^m - 1} \bigcap \limits_{j = 1}^k \Big  \{m^j\Big  (\frac{i}{2^m}, \frac{i_j - 1}{2^m}, \frac{i_j + 1}{2^m}\Big) \le \lambda \theta_m\Big \} 
            \bigcap \Big \{\max \limits_{s, t \in \{0, 1\}^k} |Z(t) - Z(s)| \le \lambda \theta_0 \Big  \}.
        \end{align*}
        By assumptions (\ref{min_increment}) and (\ref{inc_border}) the complimentary event has a probability of at most
        \begin{align*}
            C \lambda^{-\gamma} k 2^{\delta} \sum \limits_{m = 1}^ \infty 2^{m(k - \delta)} \theta_m^{-\gamma} + 4^k \eta(1, \lambda \theta_0).
        \end{align*}
        On $M$ the following inequality 
        \begin{align}
            \Big |Z\Big(\frac{s}{2^m}\Big) - Z\Big (\frac{t}{2^m}\Big)\Big | \le 2k \sum \limits_{i = 0}^m \lambda \theta_i \qquad \label{ineq_incr}
        \end{align}
        holds for all $s, t \in \{0, \ldots, 2^m\}^k$, which is obtained by  an induction argument with respect to $m$. 
        If the inequality holds for $m - 1$ we successively choose $s_1', \ldots, s_k'$ as follows.
        Assume we have already chosen $s_1', \ldots, s_j'$ and set $S_j = \big(\frac{s_1'}{2^{m - 1}}, \ldots, \frac{s_j'}{2^{m - 1}}, \frac{s_{j + 1}}{2^m}, \ldots, \frac{s_k}{2^m}\big)^t$. Then
        \begin{align*}
            s_{j + 1}' = 
            \begin{cases}
                \frac{s_{j + 1}}{2} &\text{if } s_{j + 1} \text{ is even} \\
                \operatorname{argmin} \limits_{0 \le p \le 2^{m - 1}} \left| Z\left(\frac{s_1}{2^{m - 1}}, \ldots, \frac{s_{i - 1}}{2^{m - 1}}, \frac{p}{2^{m - 1}}, \frac{s_{i + 1}}{2^m}, \ldots, \frac{s_k}{2^m}\right) - Z(S_j)\right| & \text{if } s_{j + 1} \text{ is odd}
            \end{cases}.
        \end{align*}
        The sequence $S_j$ has the following three properties
        \begin{enumerate}
            \item $S_0 = \frac{s}{2^m}$.
            \item $|Z(S_j) - Z(S_{j + 1})| < \lambda \theta_m$ for $0 \le j < k$.
            \item $S_k \in \big \{ (i_1, \ldots , i_k)/2^{m - 1} ~|~i_1, \ldots , i_k \in    \{0, \ldots, 2^{m - 1}\} \big\}$.
        \end{enumerate}    
        a similar constructions yield a sequence $T_j$ from $t_j$, which proves (\ref{ineq_incr}):
        \begin{align*}
            \Big |Z\Big(\frac{s}{2^m}\Big) - Z\Big (\frac{t}{2^m}\Big)\Big| 
            &\le  |Z(S_k) - Z(T_k)| + \sum \limits_{j = 0}^{k - 1} \big (|Z(S_j) - Z(S_{j + 1})| + |Z(T_j) - Z(T_{j + 1})|\big) \\
           &  \le \Big (2k\sum \limits_{i = 0}^{m - 1} \lambda \theta_i\Big ) + 2k \lambda \theta_m = 2k\sum \limits_{i = 0}^m \lambda \theta_i.
        \end{align*}
        We now choose $\theta_i = (i + 1)^{-2}$. Then the inequality
        \begin{align*}
            |Z(s) - Z(t)| \le 2k \lambda\sum \limits_{i = 0}^\infty (i + 1)^{-2} = k\lambda \frac{\pi^2}{3} \le 4k\lambda
        \end{align*}
        holds on $M$ for all dyadic rational points $s, t \in [0, 1]^k$. Since the paths of $Z$ are right-continuous in every coordinate, the inequality is 
        also satisfied for all real vectors $s, t \in [0, 1]^k$. The theorem now follows by choosing
        \begin{align*}
            C' = k 2^{\delta} \sum \limits_{m = 1}^ \infty 2^{m(k - \delta)} \theta_m^{-\gamma} = k 2^{\delta} \sum \limits_{m = 1}^ \infty 2^{m(k - \delta)} (m + 1)^{-\gamma}.
        \end{align*}
    \end{proof}
    
    \begin{cor}[local increments]\label{small_inc}
        Assume the process $Z$ satisfies the assumptions of Theorem \ref{sup_ineq}. Then for all $\varepsilon > 0$ and $r \in [0, 1]^k$
        \begin{align*}
            \pr \Big ( ~{\sup \limits_{\substack{\|s\|_\infty, \|t\|_\infty < \varepsilon \\ r+s, r + t \in [0, 1]^k}} |Z(r + s) - Z(r + t)| > 4k \lambda}
            \Big)  \le \varepsilon^\delta C C' \lambda^{-\gamma} + 4^k\eta(\varepsilon, \lambda).
        \end{align*}
    \end{cor}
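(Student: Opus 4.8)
The plan is to deduce Corollary~\ref{small_inc} from Theorem~\ref{sup_ineq} by an affine change of variables that rescales the relevant sub-cube around $r$ onto the full unit cube $[0,1]^k$. Writing $a_i=\max(r_i-\varepsilon,0)$ and $b_i=\min(r_i+\varepsilon,1)$, I would consider the box $B=\prod_{i=1}^k[a_i,b_i]\subseteq[0,1]^k$, whose $i$-th side length $\ell_i=b_i-a_i$ is bounded by the spatial scale $\varepsilon$ (the harmless dimensional factors are collected into the universal constant below). The affine map $\phi(u)=(a_1+\ell_1u_1,\ldots,a_k+\ell_ku_k)$ sends $[0,1]^k$ onto $B$, and I set $\widetilde Z=Z\circ\phi$. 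Since $\phi$ is continuous and increasing in each coordinate, $\widetilde Z$ is again right-continuous in every coordinate, so Theorem~\ref{sup_ineq} will be applicable to $\widetilde Z$ once its two hypotheses are checked.

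The key step is the transfer of conditions~\eqref{min_increment} and~\eqref{inc_border} to $\widetilde Z$, which is where the scale factor $\varepsilon^\delta$ is produced. For the minimal-increment condition, the $i$-th coordinate increment of $\widetilde Z$ taken between $u$-values $r'\le u_i\le t'$ coincides with the corresponding increment of $Z$ taken between the original coordinates $a_i+\ell_ir'$ and $a_i+\ell_it'$; hence the quantity $\widetilde m^i$ for $\widetilde Z$ equals $m^i$ for $Z$ evaluated at the image point, with endpoints separated by $(a_i+\ell_it')-(a_i+\ell_ir')=\ell_i(t'-r')$. Plugging into~\eqref{min_increment} gives $\p{\widetilde m^i>\lambda}\le C\ell_i^\delta\lambda^{-\gamma}|t'-r'|^\delta\le C\varepsilon^\delta\lambda^{-\gamma}|t'-r'|^\delta$, so $\widetilde Z$ satisfies~\eqref{min_increment} with the same exponents $\gamma,\delta$ but with constant $\widetilde C=C\varepsilon^\delta$. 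For the border condition, $\|\phi(u)-\phi(v)\|_\infty=\max_i\ell_i|u_i-v_i|\le\varepsilon\|u-v\|_\infty$, so that $\p{|\widetilde Z(u)-\widetilde Z(v)|>\lambda}\le\eta(\varepsilon\|u-v\|_\infty,\lambda)$; thus~\eqref{inc_border} holds for $\widetilde Z$ with $\widetilde\eta(\cdot,\cdot)=\eta(\varepsilon\,\cdot,\cdot)$, and in particular $\widetilde\eta(1,\lambda)=\eta(\varepsilon,\lambda)$.

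With these two facts in hand, Theorem~\ref{sup_ineq} applied to $\widetilde Z$ yields $\p{\sup_{u,v\in[0,1]^k}|\widetilde Z(u)-\widetilde Z(v)|>4k\lambda}\le\widetilde CC'\lambda^{-\gamma}+4^k\widetilde\eta(1,\lambda)=\varepsilon^\delta CC'\lambda^{-\gamma}+4^k\eta(\varepsilon,\lambda)$, with the same universal constant $C'$ as before. It then remains to translate back: since $\phi$ maps $[0,1]^k$ onto $B$, the supremum over $u,v$ equals $\sup_{x,y\in B}|Z(x)-Z(y)|$, which dominates the supremum in the corollary because every admissible pair $r+s,r+t$ (with $\|s\|_\infty,\|t\|_\infty<\varepsilon$ and both points in $[0,1]^k$) lies in $B$. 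The main point requiring care — and the only genuine obstacle — is the bookkeeping at the boundary: one must truncate the box to $[0,1]^k$ so that $\phi$ never leaves the domain of $Z$, and verify that this truncation only shrinks the side lengths $\ell_i$ (hence only improves the estimate) while still covering the full oscillation set appearing in the corollary. Everything else is a routine substitution.
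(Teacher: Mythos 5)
Your proof is correct and takes essentially the same route as the paper, which simply defines the rescaled process $Z'(s)=Z((r+\varepsilon s)\wedge 1)$ and applies Theorem~\ref{sup_ineq} with $C$ replaced by $C\varepsilon^\delta$ and $\eta$ by $\eta(\varepsilon\,\cdot,\cdot)$. The only cosmetic difference is that your two-sided box has side lengths up to $2\varepsilon$ rather than $\varepsilon$, so strictly speaking you obtain $\eta(2\varepsilon,\lambda)$ and an extra factor $2^\delta$ absorbable into $C'$ — harmless for the intended application, where $r$ is a corner of a subcube and $\eta$ is polynomial in its first argument.
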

    \begin{proof}
        Consider the modified process $Z': [0, 1]^k \to \mathbb{R}, Z'(s) = Z((r + \varepsilon s)\wedge 1)$. This process satisfies the requirements of Theorem \ref{sup_ineq}, if $\eta$ is replaced by $\eta(\epsilon \cdot, \cdot)$ and $C$ is eplaced by $C \varepsilon^\delta$.
    \end{proof}

    \begin{lemma}[a Rosenthal-type inequality] \label{rosenthal}
        Let $X_1, \ldots, X_n$ be independent centered random variables and $p > 2$. Then the inequality 
        \begin{align*}
            \ex \Big [~{  \Big  |\sum \limits_{i = 1}^n X_i  \Big  |^p}   \Big ] \le R(p)  \Big  (\sum \limits_{i = 1}^n \e{|X_i|^p}^{2/p} \Big  )^{p/2}
        \end{align*}
        holds, where $R(p)$ is a universal constant only depending on $p$.
    \end{lemma}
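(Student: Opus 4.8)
The plan is to deduce the statement from the classical Rosenthal inequality, a standard result which asserts that for independent, centered random variables $X_1,\dots,X_n$ and $p\ge 2$ there is a constant $C(p)$, depending only on $p$, such that
\begin{align*}
    \ex\Big[\Big|\sum_{i=1}^n X_i\Big|^p\Big]
    \le C(p)\max\Big\{\sum_{i=1}^n \e{|X_i|^p},\ \Big(\sum_{i=1}^n \e{X_i^2}\Big)^{p/2}\Big\}.
\end{align*}
Granting this, it suffices to show that \emph{both} expressions inside the maximum are dominated by the single quantity $Q := \big(\sum_{i=1}^n \e{|X_i|^p}^{2/p}\big)^{p/2}$; the assertion then follows at once with $R(p)=C(p)$, since $\max\{A,B\}\le Q$ whenever $A,B\le Q$.

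For the variance term I would use Jensen's inequality: as $p>2$, the map $x\mapsto x^{p/2}$ is convex, so $\e{X_i^2}^{p/2}\le \e{|X_i|^p}$ and hence $\e{X_i^2}\le \e{|X_i|^p}^{2/p}$ for each $i$. Summing over $i$ and raising to the power $p/2$ gives $\big(\sum_{i=1}^n \e{X_i^2}\big)^{p/2}\le Q$. For the $p$-th moment term I would set $q=2/p\in(0,1)$ and exploit the subadditivity of $t\mapsto t^q$ on $[0,\infty)$, that is $(\sum_i a_i)^q\le \sum_i a_i^q$ for nonnegative $a_i$; applied to $a_i=\e{|X_i|^p}$ and followed by raising both sides to the power $1/q=p/2$ this yields $\sum_{i=1}^n \e{|X_i|^p}\le Q$. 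Combining the two bounds completes the reduction.

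The only genuine input is the classical Rosenthal inequality itself; the two reductions above are elementary consequences of the convexity of $x\mapsto x^{p/2}$ and the concavity of $t\mapsto t^{2/p}$. If a fully self-contained proof were desired, I would instead establish the two-term Rosenthal bound directly — for instance by first symmetrizing to reduce to symmetric summands, conditioning on the signs, and applying a Khintchine-type estimate, or alternatively via a truncation-and-martingale argument that splits each $X_i$ at a suitable level and treats the bounded and unbounded parts separately. This is the step I expect to be the main obstacle, whereas converting the two-term maximum into the stated single-term bound $Q$ is routine.
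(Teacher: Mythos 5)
Your proposal is correct and follows essentially the same route as the paper: both reduce to the classical two-term Rosenthal inequality and then dominate each term of the maximum by $\big(\sum_{i=1}^n \e{|X_i|^p}^{2/p}\big)^{p/2}$, using Jensen for the variance term. The only cosmetic difference is that you bound $\sum_i \e{|X_i|^p}$ via subadditivity of $t \mapsto t^{2/p}$, whereas the paper uses Jensen's inequality on the average; these are equivalent elementary facts.
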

    \begin{proof}
        By Jensen's inequality the inequalities
        \begin{align*}
            \sum \limits_{i = 1}^n \ex  \big  [ {\left|X_i\right|^p}  \big ]  
&=  \Big  (n^{2/p} \Big  (\sum \limits_{i = 1}^n \frac{1}{n}\e{|X_i|^p} \Big  )^{2/p} \Big )^{p/2} 
             \le  \Big (n^{2/p - 1} \sum \limits_{i = 1}^n \e{|X_i|^p}^{2/p} \Big  )^{p/2} \\
            &\le  \Big  (\sum \limits_{i = 1}^n \e{\left|X_i\right|^p}^{2/p} \Big  )^{p/2} \\ 
          \Big  (\sum \limits_{i = 1}^n \e{X_i^2} \Big  )^{p/2} & \le  \Big  (\sum \limits_{i = 1}^n \e{|X_i|^p}^{2/p} \Big  )^{p/2}
        \end{align*}
        hold. The assertion now follows from Rosenthal's inequality [c.f. Theorem 3 in \cite{ros1970}]:
        \begin{align*}
            \ex  \Big  [ { \Big  |\sum \limits_{i = 1}^n X_i \Big  |^p}   \Big ] \le R(p) \max \Big  \{\sum \limits_{i = 1}^n \e{\left|X_i\right|^p},  \Big  (\sum \limits_{i = 1}^n \e{X_i^2} \Big  )^{p/2} \Big  \}.
        \end{align*}
    \end{proof}

    \begin{thm} \label{as_equi}
        The process $Z_n$ is asymptotically tight.
    \end{thm}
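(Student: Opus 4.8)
The plan is to verify the two increment estimates that feed Theorem~\ref{sup_ineq} and Corollary~\ref{small_inc}, and then to deduce asymptotic tightness from asymptotic equicontinuity together with the marginal tightness already contained in Lemma~\ref{fin_conv}.

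First I would establish the minimal-increment bound. Fix a coordinate $j$ and points $r\le s_j\le t$, and abbreviate the two one-coordinate increments by $W_1=Z_n(\dots,t,\dots)-Z_n(s)$ and $W_2=Z_n(\dots,r,\dots)-Z_n(s)$; by \ref{cond_indep} and \ref{cond_center} each is a sum of independent centred variables of the form $(g_n(\cdot,\dots,t,\dots)-g_n(\cdot,s))X_n(\cdot)$. If $t-r\le h_n^{(j)}$, condition \ref{cond_lattice} forces every $g_n(i,\cdot)$ to be constant on the relevant lattice cell, so either $W_1$ or $W_2$ vanishes identically and hence $m^j=\min(|W_1|,|W_2|)=0$. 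If $t-r>h_n^{(j)}$, I would apply the Rosenthal inequality of Lemma~\ref{rosenthal} with $p=2k+2$ to $W_1$, use \ref{cond_moments} to replace $\e{|X_n(i)|^{2k+2}}^{1/(k+1)}$ by $C^{1/(k+1)}\var{X_n(i)}$, and collapse the resulting sum into $\var{W_1}$. Controlling the latter by \ref{cond_cov} gives $\var{W_1}\le C(|t-s_j|+h_n^{(j)})\le 2C|t-r|$, so $\e{|W_1|^{2k+2}}\le C'|t-r|^{k+1}$; Markov's inequality together with $m^j\le|W_1|$ then yields
\begin{align}\label{ineq_dev}
\p{m^j(s_1,\dots,s_k,r,t)>\lambda}\le C''\lambda^{-(2k+2)}|t-r|^{k+1},
\end{align}
which is exactly \eqref{min_increment} with $\gamma=2k+2$ and $\delta=k+1>k$. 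The boundary estimate \eqref{inc_border} I would obtain analogously, telescoping $Z_n(s)-Z_n(t)$ along the $k$ intermediate points $M_i$ as in the proof of Theorem~\ref{cont_proc} and applying Lemma~\ref{rosenthal}, \ref{cond_moments} and \ref{cond_cov} to each single-coordinate step:
\begin{align}\label{ineq_fn}
\p{|Z_n(s)-Z_n(t)|>\varepsilon}\le \eta(\|t-s\|_\infty,\varepsilon):=C'''\big(\|t-s\|_\infty+\textstyle\max_j h_n^{(j)}\big)^{k+1}\varepsilon^{-(2k+2)}.
\end{align}

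With \eqref{ineq_dev} and \eqref{ineq_fn} in hand, Theorem~\ref{sup_ineq} and, more usefully, Corollary~\ref{small_inc} become applicable. The remaining step is asymptotic equicontinuity: given $\delta>0$, I would cover $[0,1]^k$ by $N(\delta)=O(\delta^{-k})$ balls of radius $\delta$ about a $\delta$-net, so that any $s,t$ with $\|s-t\|_\infty<\delta$ lie in a common ball of radius $2\delta$; bounding the oscillation over each ball by Corollary~\ref{small_inc} and taking a union bound gives, for fixed $\lambda$,
\begin{align*}
\p{\sup_{\|s-t\|_\infty<\delta}|Z_n(s)-Z_n(t)|>4k\lambda}\le N(\delta)\Big((2\delta)^{k+1}CC'\lambda^{-(2k+2)}+4^k\eta(2\delta,\lambda)\Big).
\end{align*}
Since $\max_j h_n^{(j)}\to0$, letting $n\to\infty$ turns the right-hand side into $C_\ast\,\delta\,\lambda^{-(2k+2)}$ (using $N(\delta)(2\delta)^{k+1}=O(\delta)$), which tends to $0$ as $\delta\to0$. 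This is precisely asymptotic uniform equicontinuity with respect to the sup-metric, for which $[0,1]^k$ is totally bounded. As the marginals $Z_n(t)$ converge in distribution by Lemma~\ref{fin_conv} and are therefore asymptotically tight, the characterisation of asymptotic tightness in $l^\infty([0,1]^k)$ (Theorem~1.5.7 in \cite{vanwell1996}) yields the assertion.

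The main obstacle I expect is the first case of \eqref{ineq_dev}: the exponent $\delta=k+1$ on $|t-r|$ cannot be produced by bounding a single increment, whose variance only decreases to order $h_n^{(j)}$ rather than $|t-r|$ once $|t-r|\le h_n^{(j)}$. It is therefore essential to exploit the \emph{minimum} structure of $m^j$ together with the exact lattice-constancy guaranteed by \ref{cond_lattice}; this is the one point at which the partial-sum nature of $Z_n$ is genuinely used, and getting this interplay right is the crux of the argument.
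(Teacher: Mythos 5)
Your proposal is correct and follows essentially the same route as the paper's proof: establish \eqref{min_increment} and \eqref{inc_border} via Lemma~\ref{rosenthal}, conditions \ref{cond_moments} and \ref{cond_cov}, with the lattice condition \ref{cond_lattice} handling the case $|t-r|\le h_n^{(j)}$, and then combine Corollary~\ref{small_inc} with a partition of $[0,1]^k$ and the finite-dimensional convergence of Lemma~\ref{fin_conv} to invoke the van der Vaart--Wellner characterisation of asymptotic tightness. The only cosmetic deviation is that for $|t-r|>h_n^{(j)}$ you bound $m^j$ by a single increment and apply Markov's inequality directly, whereas the paper bounds $(m^j)^2\le |W_1|\,|W_2|$ and uses H\"older; both yield the exponent $\delta=k+1>k$ required by Theorem~\ref{sup_ineq}.
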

    \begin{proof}
        We will prove that $Z_n$ satisfies the conditions of Theorem \ref{sup_ineq} resp. Corollary \ref{small_inc} with a function $\eta$ that depends on $n$. 
        Let $1 \le j \le k$ and $s_1, \ldots, s_k \in [0, 1]$ be arbitrary. For $r \le s_j \le t$ define
        \begin{align*}
            R &= (s_1, \ldots, r, \ldots, s_k)^t  , \\
            S &= (s_1, \ldots, s_j, \ldots, s_k)^t ,  \\
            T &= (s_1, \ldots, t, \ldots, s_k)^t , 
        \end{align*}
    then an   application of the Markov- resp. Hölder-inequality yields
        \begin{align}
            &\p{m^j(s_1, \ldots, s_k, r, t) > \lambda} \nonumber\\ 
            \le{}& \p{|Z_n(T) - Z_n(S)| |Z_n(S) - Z_n(R)| > \lambda^2}  \nonumber\\ 
            \le{}& \lambda ^{-2k - 2} \e{|Z_n(T) - Z_n(S)|^{k + 1} |Z_n(S) - Z_n(R)|^{k + 1}}\nonumber\\
            \le{}&  \lambda^{-2k - 2} \left(\e{|Z_n(T) - Z_n(S)|^{2k + 2}} \e{|Z_n(S) - Z_n(R)|^{2k + 2}}\right)^{\frac{1}{2}}. \label{ineq_mi}
        \end{align}
        By condition \ref{cond_moments}, \ref{cond_cov} and Lemma \ref{rosenthal} from Appendix~\ref{sec6} it follows that 
        \begin{align}
            \e{|Z(T) - Z(S)|^{2k + 2}} 
            \le{}& R(2k + 2) \Big  (\sum \limits_{i \in T_n} (g_n(i, T) - g_n(i, S))^2 \e{|X_{n}(i)|^{2k + 2}}^{2/(2k + 2)}\Big  )^{k + 1} \nonumber \\
            \le{}& R(2k + 2) C^{2k + 2}\Big (\sum \limits_{i \in T_n} (g_n(i, T) - g_n(i, S))^2 \e{|X_{n}(i)|^2}\Big  )^{k + 1} \nonumber \\
            ={}& R(2k + 2) C^{2k + 2} \var{Z_n(T) - Z_n(S)}^{k + 1}\nonumber \\
            \le{}& R(2k + 2) C^{3k + 3} \left(t - s_j + h_n^{(j)}\right)^{k + 1} ,   \label{ineq_variance}
        \end{align}
  and  a similar argument yields the inequality
        \begin{align*}
            \e{|Z(S) - Z(R)|^{2k + 2}} \le R(2k + 2) C^{3k + 3} \left(s_j - r + h_n^{(j)}\right)^{k + 1}~.
        \end{align*}
For $t - r \ge h_n^{(j)}$ this implies
        \begin{align*}
            \e{|Z(T) - Z(S)|^{2k + 2}}\e{|Z(S) - Z(R)|^{2k + 2}} 
           \le{}& c_{r,k} \left((t - s_j + h_n^{(j)})(s_j - r + h_n^{(j)}\right)^{k + 1} \\
            \le{}& c_{r,k} \Big (\frac{t - r + 2h_n^{(j)}}{2}\Big )^{k + 1}\\
            \le{}&c_{r,k}  2^{k + 1}\left(t - r\right)^{k + 1}~,
        \end{align*}
        where $c_{r,k} =R(2k + 2)^2 C^{6k + 6} $ 
        This inequality is also correct for $t - r < h_n^{(j)}$, since in this case  $Z_n(T) = Z_n(S)$ or $Z_n(S) = Z_n(R)$ holds by \ref{cond_lattice}. Plugging this inequality into (\ref{ineq_mi}) yields the estimate
        \begin{align}
            \p{m^j(s_1, \ldots, s_k, r, t) > \lambda} \le \lambda^{-2k - 2} R(2k + 2) C^{3k + 3} 2^{k + 1}(t - r)^{k + 1}. \label{ineq_dev}
        \end{align}      
        Let $s, t \in [0, 1]^k$ be arbitrary, set $S_i = (s_1, \ldots, s_{i - 1}, t_i, \ldots, t_k)^t$ and note that
$        Z(t) - Z(s) = \sum \limits_{i = 1}^{k} (Z(S_i) - Z(S_{i + 1})).
 $
         From (\ref{ineq_variance}) we can conclude
        \begin{align}
            &\p{|Z(t) - Z(s)| > \lambda} \nonumber \\
            \le{}& \sum \limits_{i = 1}^k \p{|Z(S_i) - Z(S_{i + 1})| > \lambda/k} \le \sum \limits_{i = 1}^k k^{2k + 2} \lambda^{-2k - 2} \e{|Z(S_i) - Z(S_{i + 1})|^{2k + 2}} \nonumber\\
            \le{}&\sum \limits_{i = 1}^k k^{2k + 2} \lambda^{-2k - 2} R(2k + 2) C^{3k + 3}(|t_i - s_i| + h_n^{(i)})^{k + 1} \nonumber\\
            \le{}& k^{2k + 3} \lambda^{-2k - 2}R(2k + 2) C^{3k + 3}(\|t - s\|_\infty + h_n)^{k + 1}.  \label{ineq_fn}
        \end{align}
        where $h_n = \max \limits_{i = 1}^k h_n^{(i)}$.
        Let $m$ be a positive integer and define for $j \in \{1, \ldots, m\}^k$ the set $K_j = \prod \limits_{i = 1}^k \left[\frac{j_i - 1}{m}, \frac{j_i}{m}\right]$. The inequalities (\ref{ineq_dev}) and (\ref{ineq_fn}) allow us to apply Corollary \ref{small_inc} with $\delta = k + 1$, $\gamma = 2k + 2$ and $\varepsilon = \frac{1}{m}$, which gives  the inequality
        \begin{align*}
            \pr \Big(~{\sup \limits_{s, t \in K_j} |Z(t) - Z(s)| > \lambda} \Big) &\le D(m^{-k - 1}\lambda^{-2k - 2} + \lambda^{-2k - 2}(m^{-1} + h_n)^{k + 1}) \\
            &\le 2D \lambda^{-2k - 2}(m^{-1} + h_n)^{k + 1}
        \end{align*}
  for some constant $D$ that depends only on $k$ and $C$. This yields
        \begin{align*}
            &\limsup \limits_{n \to \infty}\pr \Big ( {\sup \limits_{j \in \{1, \ldots, m\}^k} \sup \limits_{s, t \in K_j} |Z(t) - Z(s)| > \lambda} \Big) \\
            \le{}& \limsup \limits_{n \to \infty} \sum \limits_{j \in \{1, \ldots, m\}^k} \pr \Big( {\sup \limits_{s, t \in K_j} |Z(t) - Z(s)| > \lambda}  \Big ) \\
            \le{}& \limsup \limits_{n \to \infty}  m^k 2D \lambda^{-2k - 2}(m^{-1} + h_n)^{k + 1} = \frac{2D\lambda^{-2k - 2}}{m}\xrightarrow{m \to \infty} 0.
        \end{align*}
        Since the finite-dimensional distributions of $Z_n(t)$ converge weakly by Theorem \ref{fin_conv}, Theorem 1.5.6 in \cite{vanwell1996} yields the asymptotic tightness of $Z_n$.
    \end{proof}
    
    \subsection{Moments of logarithms of beta-distributed random variables} \label{betamom}

   For the application of Theorem \ref{limit_thm}   we need precise estimates of the central moments of the log-beta distribution, which are 
   given in this Section. 
    
    \begin{lemma} \label{cum_mom_ineq}
        Let $n \ge 2$ and $Y$ be a random variable with finite $n$-th moment. Denote by $\mu_n$ resp. $\kappa_n$ the $n$-th central moment resp. the $n$-th cumulant of $Y$. If the inequality
        \begin{align*}
            |\kappa_m| \le C \kappa_2^{m/2}
        \end{align*}
        holds for some constant $C \ge 1$ and all $2 \le m \le n$, then the inequality
        \begin{align*}
            |\mu_m| \le (C+m)^m \mu_2^{m/2}
        \end{align*}
        holds for all $0 \le m \le n$.
    \end{lemma}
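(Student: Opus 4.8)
The plan is to combine the classical moment–cumulant formula for central moments with a sharp bound on a Bell/Touchard polynomial. First I would recall that, since centering $Y$ annihilates only its first cumulant, the $m$-th central moment has the representation $\mu_m=\sum_{\pi}\prod_{B\in\pi}\kappa_{|B|}$, where the sum runs over all set partitions $\pi$ of $\{1,\dots,m\}$ whose blocks $B$ all have size at least $2$. Using $\kappa_2=\mu_2$ together with the hypothesis $|\kappa_{|B|}|\le C\,\mu_2^{|B|/2}$ (which applies since every block size lies in $\{2,\dots,m\}\subseteq\{2,\dots,n\}$), each summand is bounded by $C^{b(\pi)}\mu_2^{m/2}$, where $b(\pi)$ denotes the number of blocks. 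Hence $|\mu_m|\le \mu_2^{m/2}\sum_{\pi}C^{b(\pi)}$, the sum still restricted to partitions without singletons; the cases $m=0,1$ are trivial since $\mu_0=1$ and $\mu_1=0$, and $\mu_2=0$ forces $Y$ to be a.s.\ constant.

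Next I would drop the no-singleton restriction, which only enlarges the (nonnegative, as $C\ge1$) sum: $\sum_{\pi}C^{b(\pi)}\le\sum_{\text{all }\pi}C^{b(\pi)}=\sum_{b}S(m,b)C^{b}=:T_m(C)$, where $S(m,b)$ are the Stirling numbers of the second kind, so that $T_m(C)$ is the $m$-th Touchard polynomial, i.e.\ exactly the $m$-th moment of a $\mathrm{Poisson}(C)$ variable. The lemma thereby reduces to the purely analytic inequality $T_m(C)\le(C+m)^m$ for all real $C\ge1$.

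For this I would prove the slightly stronger bound $T_m(C)\le C\,(C+m)^{m-1}$ by induction on $m$, which yields $T_m(C)\le(C+m)^m$ at once since $C\le C+m$. The induction rests on the Bell recursion $T_m(C)=C\sum_{i=0}^{m-1}\binom{m-1}{i}T_i(C)$ and the inductive bound $T_i(C)\le C(C+i)^{i-1}$ (which holds with equality at $i=0,1$, furnishing the base case), giving $T_m(C)\le C\sum_{i=0}^{m-1}\binom{m-1}{i}C(C+i)^{i-1}$. The decisive tool is Abel's binomial identity, which in the form
$$\sum_{i=0}^{m-1}\binom{m-1}{i}C\,(C+i)^{i-1}(m-i)^{\,m-1-i}=(C+m)^{m-1}$$
reproduces $(C+m)^{m-1}$ exactly; since every factor $(m-i)^{m-1-i}\ge1$ for $0\le i\le m-1$ and all terms are nonnegative, the sum stripped of those factors is bounded by $(C+m)^{m-1}$, whence $T_m(C)\le C(C+m)^{m-1}$.

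The step requiring care, and the main obstacle, is precisely this last estimate. The crude replacement $(C+i)^{i-1}\le(C+m)^{i-1}$ collapses the sum into a closed binomial expression of order $C\cdot e$, which \emph{fails} to stay below $C+m$ once $C$ is large; the seemingly generous target $(C+m)^m$ is in fact only barely large enough. One must therefore keep the factors $(C+i)^{i-1}$ intact and discard solely the harmless powers $(m-i)^{m-1-i}\ge1$ that Abel's identity supplies. Recognizing that this identity is the right accounting device, rather than any elementary binomial bound, is the key insight that closes the induction.
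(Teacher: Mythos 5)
Your proof is correct, but it takes a genuinely different route from the paper. The paper starts from the recursion $\mu_m=\kappa_m+\sum_{i=2}^{m-2}\binom{m-1}{i-1}\kappa_i\,\mu_{m-i}$ for central moments and runs a direct induction: inserting the hypothesis $|\kappa_i|\le C\kappa_2^{i/2}$ and the inductive bound $|\mu_{m-i}|\le (C+m-i)^{m-i}\mu_2^{(m-i)/2}$, it majorizes the resulting sum $1+\sum_{i=2}^{m-2}\binom{m-1}{i}(C+i)^i$ by the full binomial expansion $\sum_{i=0}^{m-1}\binom{m-1}{i}(C+m-1)^i=(C+m)^{m-1}$, arriving at the same intermediate bound $|\mu_m|\le C(C+m)^{m-1}\kappa_2^{m/2}$ that you obtain. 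You instead expand $\mu_m$ over singleton-free set partitions, dominate the partition sum by the Touchard polynomial $T_m(C)$, and prove $T_m(C)\le C(C+m)^{m-1}$ via the Bell recursion and Abel's binomial identity. Both arguments are valid; yours is arguably more transparent about where the combinatorial weight $C^{b(\pi)}$ comes from, and the reduction to a clean statement about Poisson moments is pleasant. One remark on your closing paragraph: the Abel identity is elegant but not actually forced. Your concern that the crude replacement $(C+i)^{i-1}\le(C+m)^{i-1}$ fails is accurate only because you insist on propagating the \emph{stronger} inductive hypothesis $T_i(C)\le C(C+i)^{i-1}$; if one carries the weaker hypothesis $T_i(C)\le (C+i)^i$ through the Bell recursion, the crude bound $(C+i)^i\le(C+m-1)^i$ gives $T_m(C)\le C\sum_{i=0}^{m-1}\binom{m-1}{i}(C+m-1)^i=C(C+m)^{m-1}\le(C+m)^m$ directly, which is in essence the paper's computation. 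So the target $(C+m)^m$ is not as tight as you suggest, and the induction closes by elementary means.
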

    \begin{proof}
        We will show this theorem with an induction argument. For $n = 0, 1, 2$ the inequality holds trivially. For $n \ge 3$ we obtain from 
        the recursion
        \begin{align*}
            \mu_n = \kappa_n + \sum \limits_{m = 2}^{n - 2} \binom{n - 1}{m - 1} \kappa_m \mu_{n - m}
        \end{align*}
     [see for example  \cite{smith1995}] 
             \begin{align*}
            |\mu_m| 
            \le{}& C\kappa_2^{m/2} + \sum \limits_{i = 2}^{m - 2} \binom{m - 1}{i - 1} C\kappa_2^{i/2}(C + m - i)^{m - i}\kappa_2^{(m - i)/2} \\
            ={}& C\Big \{ 1 + \sum \limits_{i = 2}^{m - 2} \binom{m - 1}{i}(C + i)^i\Big \} \kappa_2^{m/2} 
            \le C v \Big \{   \sum \limits_{i = 0}^{m - 1} \binom{m - 1}{i}(C + m - 1)^i\Big \}  \kappa_2^{m/2} \\
            ={}& C (C + m)^{m - 1} \kappa_2^{n/2} \le (C + m)^m \mu_2^{m/2}. \\
        \end{align*}
    \end{proof}

    \begin{thm} \label{mom_high}
    If  $a, b \ge M > 0$, $X \sim \beta(a, b)$ and $Y = \log(X)$,  then the inequality
        \begin{align*}
            \big| \e{(Y - \e{Y})^n} \big| \le \Big (n! 2^{n/2} (M \wedge 1)^{-(n - 1)/2}\left(1 + \frac{1}{M}\right) + n\Big )^n\var{Y}^{n/2}
        \end{align*}
        holds.
    \end{thm}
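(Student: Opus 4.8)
The plan is to reduce the statement to a bound on the cumulants of $Y=\log X$ and then invoke Lemma~\ref{cum_mom_ineq}. Write $\mu_m$ for the $m$-th central moment and $\kappa_m$ for the $m$-th cumulant of $Y$, and recall $\mu_2=\kappa_2=\var{Y}$. If one can show
$$|\kappa_m|\le C\,\kappa_2^{m/2}\qquad(2\le m\le n),\qquad C=n!\,2^{n/2}(M\wedge 1)^{-(n-1)/2}\big(1+\tfrac1M\big),$$
then Lemma~\ref{cum_mom_ineq} (whose hypothesis is exactly this, with $C\ge1$ holding for $n\ge2$) yields $|\mu_n|\le(C+n)^n\mu_2^{n/2}$. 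Since $C+n$ is precisely the bracketed term in the claim and $\mu_2=\var{Y}$, this is the asserted inequality. Thus the whole theorem rests on the displayed cumulant estimate.

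To obtain that estimate, first I would identify the cumulants explicitly. Since $\e{e^{sY}}=\e{X^s}=\Gamma(a+s)\Gamma(a+b)/\big(\Gamma(a)\Gamma(a+b+s)\big)$, the cumulant generating function equals $\log\Gamma(a+s)-\log\Gamma(a+b+s)$ up to an additive constant, so differentiating $m$ times and using the notation (\ref{polgma}) gives $\kappa_m=\psi_{m-1}(a)-\psi_{m-1}(a+b)$ for $m\ge1$; in particular $\kappa_2=\psi_1(a)-\psi_1(a+b)=\var{Y}$, in accordance with (\ref{form_kappa1}). Inserting the integral representation $\psi_{k}(x)=(-1)^{k+1}\int_0^\infty t^k e^{-xt}(1-e^{-t})^{-1}\,dt$ (valid for $k\ge1$) turns this into
$$|\kappa_m|=\int_0^\infty t^{m-1}\,\frac{e^{-at}-e^{-(a+b)t}}{1-e^{-t}}\,dt,$$
where the integrand has constant sign so that the absolute value is just the integral.

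The estimate itself then follows from two elementary bounds on the kernel $(1-e^{-t})^{-1}$, both of which are the substance of the polygamma inequalities collected in Appendix~\ref{appendix_polygamma} (formula (\ref{bound_psi_diff})). The bound $t/(1-e^{-t})\ge1$ gives $\kappa_2\ge\frac1a-\frac1{a+b}$, while $(1-e^{-t})^{-1}\le1+t^{-1}$ gives $|\kappa_m|\le(m-1)!\big(a^{-m}-(a+b)^{-m}\big)+(m-2)!\big(a^{-(m-1)}-(a+b)^{-(m-1)}\big)$. Abbreviating $u=1/a$, $v=1/(a+b)$ and using $u^k-v^k\le k(u-v)u^{k-1}$, the upper bound collapses to $(u-v)u^{m-2}(m-1)!(mu+1)$, and division by $\kappa_2^{m/2}\ge(u-v)^{m/2}$ leaves $(m-1)!(mu+1)\big(u^2/(u-v)\big)^{m/2-1}$. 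The decisive simplification is $u^2/(u-v)=\tfrac1a+\tfrac1b\le\tfrac2M$, after which the ratio is at most $m!\,(1+\tfrac1M)\,2^{m/2-1}M^{-(m-2)/2}$; bounding each factor by its value at $m=n$ (with $M^{-(m-2)/2}\le(M\wedge1)^{-(n-1)/2}$ checked separately for $M\le1$ and $M>1$) produces exactly $C$.

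The main obstacle is the bookkeeping in this last step rather than any deep idea: one must keep the hypothesis $b\ge M$ in play throughout, since it is precisely this lower bound (entering through $u^2/(u-v)=\tfrac1a+\tfrac1b$) that prevents $|\kappa_m|/\kappa_2^{m/2}$ from blowing up as $a\to\infty$ with $b$ fixed — dropping it would make the estimate false for $m>2$. Care is likewise needed to handle the non-integer exponents $m/2$ and to absorb the $m$-dependent factors $(mu+1)$ and $(m-1)!$ into the clean constant via $m!(1+\tfrac1M)$ together with the crude but sufficient bounds $m!\le n!$ and $2^{m/2-1}\le2^{n/2}$.
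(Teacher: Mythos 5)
Your proposal is correct and follows essentially the same route as the paper: reduce the claim to the cumulant bound $|\kappa_m|\le C\,\kappa_2^{m/2}$ and invoke Lemma~\ref{cum_mom_ineq}, identify $\kappa_m=\psi_{m-1}(a)-\psi_{m-1}(a+b)$ from the cumulant generating function, and control this difference via the polygamma estimates of Appendix~\ref{appendix_polygamma}. The only (harmless) difference is that you re-derive the needed bounds inline from the integral representation rather than citing (\ref{bound_psi_diff}) and (\ref{bound_psik_diff}) directly.
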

    \begin{proof}
        In the following we will show that the cumulants $\kappa_n$ of $Y$ satisfy
        \begin{align} \label{vor}
            |\kappa_m| \le n! 2^{n/2} (M \wedge 1)^{-(n - 1)/2} \Big (1 + \frac{1}{M}\Big ) \kappa_2^{m/2}
        \end{align}
        for all $2 \le m \le n$. An application of Lemma \ref{cum_mom_ineq} then yields the desired result.    
For a proof of \eqref{vor} we denote by  
        \begin{align*}
            K(t) = \log \e{\exp(tY)} = \log \e{X^t} = \log \Big (\frac{B(a + t, b)}{B(a, b)}\Big ) = \log \Big (\frac{\Gamma(a + t) \Gamma(b) \Gamma(a + b)}{\Gamma(a + b + t) \Gamma(a) \Gamma(b)}\Big )
        \end{align*}
                    the cumulant generating function of the random variable $Y$.
        For $m \ge 1$ the $m$-th derivative of $K$ can be calculated as
        \begin{align*}
            K^{(m)}(t) = \psi_{m - 1}(a + t) - \psi_{m - 1}(a + b + t),
        \end{align*}
        where $\psi_k(x) = \frac{d^{k + 1}}{dx^{k + 1}} \log \Gamma(x)$ denotes the polygamma function of order $k$. This yields
        \begin{align}
            \kappa_m = K^{(m)}(0) = \psi_{m - 1}(a) - \psi_{m - 1}(a + b) \label{form_kappa1}
        \end{align}
        for $m \ge 1$. Applying formula  (\ref{bound_psi_diff}) and (\ref{bound_psik_diff}) from Appendix \ref{appendix_polygamma} yields
        \begin{align}
                \frac{|\kappa_m|}{|\kappa_2|^{m/2}} & \le  {m!  \min(a, b) a^{-m} \left(1 + a^{-1}\right)}{\Big (\frac{a(a+b)}{b}\Big )^{m/2}} 
                 = m! \Big (\frac{1}{a} + \frac{1}{b}\Big )^{m/2} \min(a, b) (1 + a^{-1})\nonumber \\
                 & \le ~m! 2^{m/2} \max(a^{-1}, b^{-1})^{m/2} \max(a^{-1}, b^{-1})^{-1} (1 + a^{-1}) \nonumber \\
                & \le~ m! 2^{m/2} M^{-(m - 1)/2} \Big (1 + \frac{1}{M}\Big ) \le n! 2^{n/2} (M \wedge 1)^{-(n - 1)/2} \Big (1 + \frac{1}{M}\Big ).
                \label{bnd_kappa}
        \end{align}
    \end{proof}
    
    \begin{thm} \label{mom_high_2}
     If  $a, b \ge M > 0$, $X \sim \beta(a, b)$ and $Y = \log(X(1-X))$, then there exists a constant $C_n(M)$ depending only on $n$ and $M$ such that the inequality
        \begin{align*}
            \big| \e{(Y - \e{Y})^n} \big| \le C_n(M)\var{Y}^{n/2}
        \end{align*}
        holds.
    \end{thm}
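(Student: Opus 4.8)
The plan is to follow the architecture of the proof of Theorem~\ref{mom_high}: first write the cumulants $\kappa_m$ of $Y=\log(X(1-X))$ in closed form, then establish a bound of the type $|\kappa_m|\le C\,\kappa_2^{m/2}$ with a single constant $C=C(n,M)\ge 1$ valid for all $2\le m\le n$, and finally invoke Lemma~\ref{cum_mom_ineq} to transfer this to the central moments. Since $\e{(X(1-X))^t}=B(a+t,b+t)/B(a,b)$, the cumulant generating function is $K(t)=\log\Gamma(a+t)+\log\Gamma(b+t)-\log\Gamma(a+b+2t)+\text{const}$, so differentiating $m$ times and using the chain rule on the last term gives, for $m\ge 2$,
\[
\kappa_m=\psi_{m-1}(a)+\psi_{m-1}(b)-2^{m}\,\psi_{m-1}(a+b),
\]
which for $m=2$ is exactly the formula $\kappa_2=\psi_1(a)+\psi_1(b)-4\psi_1(a+b)$ already used in the proof of~\eqref{cn}. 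Once the uniform cumulant bound is in hand, Lemma~\ref{cum_mom_ineq} yields $|\e{(Y-\e{Y})^{n}}|\le (C+n)^{n}\var{Y}^{n/2}$, i.e.\ the assertion with $C_n(M)=(C+n)^n$.

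The case $m=2$ is trivial, so the content is to bound $|\kappa_m|/\kappa_2^{m/2}$ for $3\le m\le n$ uniformly over $a,b\ge M$. The genuine difficulty — and the reason the statement only asserts \emph{existence} of $C_n(M)$, unlike the explicit bound in Theorem~\ref{mom_high} — is that $\kappa_2$ degenerates on the diagonal: to leading order $\kappa_2\sim (a-b)^2/\big(ab(a+b)\big)$, which vanishes at $a=b$. Hence $\kappa_2$ cannot be bounded below crudely, and the naive idea of splitting $Y=\log X+\log(1-X)$ and applying Theorem~\ref{mom_high} to each term fails, because the two summands are strongly negatively correlated and their individual variances are of larger order than $\var{Y}$ near $a=b$. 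The key device is the polygamma multiplication formula $2^{m}\psi_{m-1}(a+b)=\psi_{m-1}\big(\tfrac{a+b}{2}\big)+\psi_{m-1}\big(\tfrac{a+b}{2}+\tfrac12\big)$, which rewrites
\[
\kappa_m=\Big[\psi_{m-1}(a)+\psi_{m-1}(b)-2\psi_{m-1}\big(\tfrac{a+b}{2}\big)\Big]+\Big[\psi_{m-1}\big(\tfrac{a+b}{2}\big)-\psi_{m-1}\big(\tfrac{a+b}{2}+\tfrac12\big)\Big]=:D_1^{(m)}+D_2^{(m)}.
\]
For $m=2$ both pieces are nonnegative (a convexity second difference and a monotonicity difference of $\psi_1$), and the polygamma inequalities of Appendix~\ref{appendix_polygamma} give the \emph{uniform} lower bound $\kappa_2=D_1^{(2)}+D_2^{(2)}\ge \tfrac12(a+b)^{-2}+\tfrac12(a-b)^2(a+b)^{-3}$; the same decomposition for general $m$ provides the matching upper estimate $|\kappa_m|\lesssim_m (a+b)^{-m}+(a-b)^2(a+b)^{-(m+1)}$ in the near-diagonal range where the cancellation in $\kappa_m$ must be exploited.

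With these estimates I would finish by a case distinction on $[M,\infty)^2$, restricted to $b\le a$ by symmetry. On any compact sub-rectangle $\kappa_2$ is continuous and strictly positive (as $Y$ is non-degenerate), so the ratio is bounded there. When $a,b$ are comparable and large, writing $w=(a-b)^2/(a+b)$ the lower bound on $\kappa_2$ and the $D_1^{(m)}+D_2^{(m)}$ upper bound reduce $|\kappa_m|/\kappa_2^{m/2}$ to a bounded multiple of $(1+w)^{1-m/2}\le 1$ for $m\ge 2$. In the remaining regime where $b$ is much smaller than $a$ (including $b$ bounded with $a\to\infty$), no cancellation occurs: the smaller argument dominates, $\kappa_m\to\psi_{m-1}(b)$ and $\kappa_2\to\psi_1(b)>0$ as $a\to\infty$, and a direct estimate gives a ratio of order $\min(a,b)^{-(m-2)/2}\le M^{-(m-2)/2}$. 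Taking $C=\max_{2\le m\le n}\sup_{a,b\ge M}|\kappa_m|/\kappa_2^{m/2}$ then completes the reduction.

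I expect the main obstacle to be the uniform control of the ratio in the ``both parameters large'' regime as it passes through the degenerate diagonal $a=b$, where numerator and denominator collapse simultaneously: only the sharp lower bound $\kappa_2\gtrsim(a+b)^{-2}+(a-b)^2(a+b)^{-3}$ coming from the decomposition $\kappa_2=D_1^{(2)}+D_2^{(2)}$, matched against an equally sharp upper bound on $|\kappa_m|$ that retains two orders of the polygamma expansion, prevents a blow-up. Securing these two-sided, uniform polygamma estimates is the technical heart of the argument; the compact and disparate-parameter regimes are comparatively routine.
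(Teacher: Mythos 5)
Your proposal follows the paper's own argument in all essentials: the same cumulant formula $\kappa_m=\psi_{m-1}(a)+\psi_{m-1}(b)-2^m\psi_{m-1}(a+b)$, the same key device of splitting off $2^m\psi_{m-1}(a+b)$ via the polygamma duplication formula into the two pieces $D_1^{(m)}+D_2^{(m)}$, the same two-sided polygamma estimates giving $\kappa_2\gtrsim (a+b)^{-2}+(a-b)^2(a+b)^{-3}$ against matching upper bounds on $|\kappa_m|$, and the same final conversion to central moments via Lemma~\ref{cum_mom_ineq}. The only (cosmetic) difference is that you merge the paper's Cases 1 and 2 into a single estimate $(1+w)^{1-m/2}\le 1$ with $w=(a-b)^2/(a+b)$, whereas the paper runs an explicit algebraic case split; your disparate-parameter regime is the paper's Case~3.
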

    \begin{proof}
        We will show that for $n \ge 2$ the quotient
$
           {|\kappa_n|}/ {\kappa_2^{n/2}}
  $
        is bounded by a constant depending only on $n$ and $M$. The assertion then follows from the same arguments as used in 
        the proof of Theorem \ref{mom_high}. The only difference is that the bound has a more complex structure and we will therefore omit an explicit representation of $C_n(M)$. 
        
        The cumulant-generating function of $Y$ is given by
        \begin{align*}
            K(t) 
            = \log \e{X^t(1-X)^t} 
            = \log \Big (\frac{B(a+t, b+t)}{B(a, b)}\Big) = \log \Big(\frac{\Gamma(a+t)\Gamma(b+t)\Gamma(a+b)}{\Gamma(a+b+2t)\Gamma(a)\Gamma(b)}\Big).
        \end{align*}
        For $n \ge 1$ the $n$-th derivative of the cumulant-generating function can be written as
        \begin{align*}
            K^{(n)}(t) &= \psi_{n - 1}(a+t) + \psi_{n - 1}(b+t) - 2^n \psi_{n - 1}(a + b + 2t),  
        \end{align*}
        and this yields the representation
        \begin{align}     \label{form_kappa2}
                \kappa_n ={}& K^{(n)}(0) = \psi_{n - 1}(a) + \psi_{n - 1}(b) - 2^n \psi_{n - 1}(a+b)\\
                ={}&\Big \{\psi_{n - 1}(a) + \psi_{n - 1}(b) - 2\psi_{n - 1}\Big (\frac{a + b}{2}\Big )\Big\}
                + \Big \{\psi_{n - 1}\Big (\frac{a + b}{2}\Big ) - \psi_{n - 1}\Big (\frac{a + b + 1}{2}\Big )\Big \}
  \nonumber
         \end{align}
        for the $n$-th cumulant of $Y$, where we have used  formula (6.4.8) in \cite{abrsteg1964}. In the following, we will multiple times use the fact that $(-1)^n \psi_{n - 1}$ is a nonnegative decreasing function, which is apparent from formula (\ref{poly_integral}) in Appendix \ref{appendix_polygamma}. 
        
        By the mean-value theorem there exists a $\xi \in \left(\frac{a + b}{2}, \frac{a + b + 1}{2}\right)$ such that the inequality 
        \begin{align} \nonumber 
   \psi_{1}\Big (\frac{a + b}{2}\Big ) - \psi_{1}\Big(\frac{a + b + 1}{2}\Big ) & = -\frac{1}{2} \psi_2(\xi) \ge \frac{1}{2} \Big |\psi_2 \Big (\frac{a + b + 1}{2}\Big )\Big | \\
                & \ge 2 (a + b + 1)^{-2} \ge \frac{1}{\left(1 + \frac{1}{M}\right)^2(a + b)^2}
\label{bnd_k2_small}
        \end{align}
        is satisfied, where the lower bound for $|\psi_2|$ follows from (\ref{bound_psi}) in Appendix \ref{appendix_polygamma}. 
        
        From (\ref{bound_psi}), (\ref{bound_psi_diff}) and (\ref{log_conv}) we know
        \begin{align}
            \begin{split}
                &\psi_1(a) + \psi_1(b) - 2 \psi_1\left(\frac{a + b}{2}\right) \ge \left(\sqrt{\psi_1(a)} - \sqrt{\psi_1(b)}\right)^2 \\
                ={}& \Big (\frac{\psi_1(a) - \psi_1(b)}{\sqrt{\psi_1(a)} +
                 \sqrt{\psi_1(b)}}\Big )^2 \ge \Big (\frac{b - a}{ab ( a^{-1/2} + b^{-1/2} )}\Big)^2\frac{M}{M + 1}\\
                \ge{}&\Big (\frac{b - a}{ab \sqrt{2} (a^{-1} + b^{-1} )^{1/2}}\Big )^2\frac{M}{M + 1} = \frac{(b - a)^2}{2ab(a + b)}\frac{M}{M + 1}.
            \end{split}\label{bnd_k2_large}
        \end{align}
        Now (\ref{form_kappa2}) and (\ref{bnd_k2_large}) yield
        \begin{align*}
            \frac{|\kappa_n|}{\kappa_2^{n/2}} \le \frac{\left|\psi_{n - 1}(a) + \psi_{n - 1}(b) - 2\psi_{n - 1}\left(\frac{a + b}{2}\right) \right|}{\kappa_2^{n/2}} + \frac{\left|\psi_{n - 1}\left(\frac{a + b}{2}\right) - \psi_{n - 1}\left(\frac{a + b + 1}{2}\right)\right|}{\left|\psi_1\left(\frac{a + b}{2}\right) - \psi_1\left(\frac{a + b + 1}{2}\right)\right|^{n/2}}.
        \end{align*}
        The formulas (\ref{form_kappa1}) and (\ref{bnd_kappa}) show that the second term is bounded and it only remains to prove that the first term is bounded. Since the term is zero for $a = b$, we will assume $a \ne b$ in the following. 
        
For this purpose let $0 < c < d$ be positive numbers and set $x = \frac{c + d}{2}$, $h = \frac{d - c}{2}$. Then $x + h = d$ and $x - h = c$. By the generalized mean-value theorem there are numbers $0 < \xi' < \xi < h$ such that
        \begin{align*}
            \frac{\psi_{n - 1}(x + h) + \psi_{n - 1}(x - h) - 2\psi_{n - 1}(x)}{h^2} &= \frac{\psi_n(x + \xi) - \psi_n(x - \xi)}{2\xi} \\
          &=  \frac{\psi_{n + 1}(x + \xi') + \psi_{n + 1}(x - \xi')}{2}
        \end{align*}
        holds. Note that $x + \xi \in \left(\frac{c + d}{2}, d\right)$ and $x - \xi \in \left(c, \frac{c + d}{2}\right)$. Applying this to $c = \min(a, b)$ and $d = \max(a, b)$ and using the monotonicity of $|\psi_{n + 1}|$ yields
        \begin{align}
          \label{bnd_kn}            \Big |\psi_{n - 1}(a) + \psi_{n - 1}(b) - 2\psi_{n - 1}\Big(\frac{a + b}{2}\Big )\Big | &\le \big |\psi_{n + 1}(\min(a, b)) \big| \frac{(b - a)^2}{4} \\
           &      \le{} (n + 1)! \min(a, b)^{-(n + 1)} \Big (1 + \frac{1}{M}\Big ) (b - a)^2, 
      \nonumber 
        \end{align}
        where the upper bound for $|\psi_{n + 1}|$ stems from (\ref{bound_psi}).        
        We will now consider three separate cases.         \br
        
        \textbf{Case 1:} $\frac{1}{2} \le \frac{\min(a, b)}{\max(a, b)}$ and $\frac{(ab(a+b))^{n/2}}{\min(a, b)^{n + 1}} \le |b - a|^{n - 2}$. An application of (\ref{bnd_k2_large}) and (\ref{bnd_kn}) yields the desired result:
        \begin{align*}
    f_n:=         &\frac{\left|\psi_{n - 1}(a) + \psi_{n - 1}(b) - 2 \psi_{n - 1} \left(\frac{a + b}{2}\right)\right|}{\kappa_2^{n/2}} \le (n + 1)! \Big (1 + \frac{1}{M}\Big 
            )\frac{(b - a)^2}{\min(a, b)^{n + 1} \left(\frac{(b - a)^2}{2ab(a+b)}\right)^{n/2}} \\
            ={}& (n + 1)! \Big (1 + \frac{1}{M}\Big )\frac{(2ab(a+b))^{n/2}}{\min(a, b)^{n + 1} |b - a|^{n - 2}} \le (n + 1)! \Big (1 + \frac{1}{M}\Big ) 2^{n/2}.
        \end{align*}

        \textbf{Case 2:} $\frac{1}{2} \le \frac{\min(a, b)}{\max(a, b)}$ and $\frac{(ab(a+b))^{n/2}}{\min(a, b)^{n + 1}} > |b - a|^{n - 2}$. An application of (\ref{bnd_k2_small}) and (\ref{bnd_kn}) yields the desired result:
        \begin{align*}
              f_n    \le{}& (n + 1)! \Big (1 + \frac{1}{M}\Big) \frac{(b - a)^2}{\min(a, b)^{n + 1}} \Big(\Big (1 + \frac{1}{M}\Big ) (a + b)\Big)^n \\
            \le{}& (n + 1)!\Big(1 + \frac{1}{M}\Big)^{n + 1} \Big (\frac{(ab(a+b))^{n/2}}{\min(a, b)^{n + 1}}\Big)^{2/(n-2)} \min(a, b)^{-(n + 1)} (a + b)^n \\
            ={}& (n + 1)!\Big(1 + \frac{1}{M}\Big)^{n + 1} \frac{a^{n/(n - 2)}b^{n/(n - 2)}(a + b)^{n + n/(n - 2)}}{\min(a, b)^{n + 1 + 2(n + 1)/(n - 2)}} \\
            ={}& (n + 1)!\Big(1 + \frac{1}{M}\Big)^{n + 1} \Big(\frac{\max(a, b)}{\min(a, b)}\Big)^{n/(n - 2)} \Big (1 + \frac{\max(a, b)}{\min(a, b)}\Big )^{n + n/(n - 2)} \\
            \le{}& (n + 1)!\Big (1 + \frac{1}{M}\Big )^{n + 1} 3^{n + 2n/(n - 2)}.
        \end{align*}
        
        \textbf{Case 3:} $\frac{\min(a, b)}{\max(a, b)} < \frac{1}{2}$. The estimate (\ref{bound_psi}) yields
        \begin{align*}
            &\Big |\psi_{n - 1}(a) + \psi_{n - 1}(b) - 2 \psi_{n - 1} \Big (\frac{a + b}{2}\Big )\Big | \le |\psi_{n - 1}(a)| + |\psi_{n - 1}(b)| + 2 \Big |\psi_{n - 1} \Big(\frac{a + b}{2}\Big )\Big | \\
            \le{}& 4 |\psi_{n - 1}(\min(a, b))| \le 4 n! \min(a, b)^{-(n - 1)}\Big (1 + \frac{1}{M}\Big ).
        \end{align*}
        Jointly with (\ref{bnd_k2_large}) this implies
        \begin{align*}
         f_n    \le{}& 4 n!\Big(1 + \frac{1}{M}\Big)^{1 + n/2} \min(a, b)^{-(n - 1)} \frac{(2ab(a+b))^{n/2}}{|b - a|^n} \\
            \le{}& 2^{2+n} n!\Big(1 + \frac{1}{M}\Big)^{1 + n/2} \frac{\max(a, b)^n \min(a, b)^{n/2}}{\max(a, b)^n 2^{-n} \min(a, b)^{n - 1}} \\
            ={}& 2^{2+2n} n!\Big(1 + \frac{1}{M}\Big)^{1 + n/2} \min(a, b)^{1 - n/2} \le 2^{2+2n} n!\Big(1 + \frac{1}{M}\Big )^{1 + n/2} M^{1 - n/2} \\
            \le{}& 2^{2+2n} n! \Big(1 + \frac{1}{M}\Big)^n,
        \end{align*}
        where we used the inequality
        \begin{align*}
            |b - a| = \max(a, b) - \min(a, b) > \max(a, b) - \frac{1}{2}\max(a, b) = \frac{\max(a, b)}{2}
        \end{align*}
 in the third line.
    \end{proof}
    
\subsection{Proofs of Lemma~\ref{lemma_cumulant1}  and   Lemma~\ref{lemma_cumulant2}} \label{secc}
    
        {\bf Proof of Lemma \ref{lemma_cumulant1}}
        If $X \sim \beta(a, b)$, then we obtain from formula (\ref{form_kappa1}) in Appendix~\ref{betamom}
        \begin{align*}
            (-1)^m \kappa_m(\log(X)) = (-1)^{m + 1}(\psi_{m - 1}(a + b) - \psi_{m - 1}(a)) = \int_a^{a + b} (-1)^{m + 1} \psi_m(t) \, dt. 
        \end{align*}
As  $(-1)^{m + 1} \psi_m(t) \ge 0$  (see   formula (\ref{poly_integral}) in Appendix \ref{appendix_polygamma})  it follows from  (\ref{bound_psi}) 
that 
        \begin{align}
            (-1)^m \kappa_m(\log(X)) \ge \int_a^{a + b} (m - 1)! t^{-m} \, dt \ge (m - 1)! b (a + b)^{-m}, \label{lower_bound_cumulant}
        \end{align}
        \begin{align}
            (-1)^m \kappa_m(\log(X)) \le \int_a^{a + b} (m - 1)! t^{-m} \left(1 + \frac{m}{t}\right) \, dt \le (m-1)! b a^{-m} \left(1 + \frac{m}{a}\right). \label{upper_bound_cumulant}
        \end{align}
        Applying (\ref{lower_bound_cumulant}) yields the lower bound
        \begin{align*}
            (-1)^{m}\kappa_m(S_n) 
            &\ge \sum \limits_{i = 1}^{\lfloor nt \rfloor - 1}\sum \limits_{j = 1}^{\lfloor p_ns \rfloor - 1}(\lfloor nt \rfloor - i)^m \frac{j(m - 1)!}{2(p_n + 1)^m(n - i + 1)^m} \\
            &\ge (m - 1)!\frac{\lfloor p_ns  - 1\rfloor\lfloor p_ns\rfloor}{4(p_n + 1)^m} \sum \limits_{i = 1}^{\lfloor nt \rfloor - 1} \frac{(nt - i - 1)^m}{(n - i + 1)^m} \\
            &= (m - 1)!\frac{\lfloor p_ns - 1\rfloor\lfloor p_ns\rfloor}{4(p_n + 1)^m}\sum \limits_{i = 0}^{\lfloor nt \rfloor - 2} \frac{(t - 2/n - i/n)^m}{(1 - i/n)^m} \\
            &\ge n \frac{\lfloor p_ns - 1\rfloor\lfloor p_ns\rfloor}{(p_n + 1)^m} \frac{(m - 1)!}{4} \int_0^{t-2/n} \Big (\frac{t - 2/n - x}{1 - x}\Big )^m \, dx ~,
        \end{align*}
     while the upper bound follows in a similar manner from  (\ref{upper_bound_cumulant}), i.e. 
        \begin{align*}
            & (-1)^{m}\kappa_m(S_n) = (-1)^m\sum \limits_{i = 1}^{\lfloor nt \rfloor - 1}\sum \limits_{j = 1}^{\lfloor p_ns \rfloor - 1}(\lfloor nt \rfloor - i)^m \kappa_m(\log(r_{2n + 1, 2i, j})) \\
            \le{}& \sum \limits_{i = 1}^{\lfloor nt \rfloor - 1}\sum \limits_{j = 1}^{\lfloor p_ns \rfloor - 1}\frac{(\lfloor nt \rfloor - i)^mj(m - 1)!}{2((p_n + 1)(n - i + 1) - j/2)^m}
            \Big ( 1 + \frac{m}{(p_n + 1)(n - i + 1) - j/2}\Big ) \\
            \le{}& \sum \limits_{i = 1}^{\lfloor nt \rfloor - 1}\sum \limits_{j = 1}^{\lfloor p_ns \rfloor - 1}\frac{(nt  - i)^mj(m - 1)!}{2((p_n + 1)^m(n - i)^m}\Big (1 + \frac{m}{(p_n + 1)(n - i)}\Big ) \\
            \le{}&\frac{\lfloor p_ns  - 1 \rfloor \lfloor p_ns\rfloor}{(p_n + 1)^m}\left(1 + \frac{m}{p_n}\right)\frac{(m - 1)!}{4} \sum \limits_{i = 1}^{\lfloor nt \rfloor - 1} \frac{(t - i/n)^m}{(1 - i/n)^m} \\
            \le{}&n\frac{\lfloor p_ns - 1 \rfloor \lfloor p_ns\rfloor}{(p_n + 1)^m}\Big (1 + \frac{m}{p_n}\Big )\frac{(m - 1)!}{4} \int_0^t \Big (\frac{t - x}{1 - x}\Big )^m \, dx.
        \end{align*}
        The bounds for $\kappa_m(S_n')$ are proven in essentially the same way. 

\bigskip

   {\bf Proof of Lemma \ref{lemma_cumulant2}.}
    For a beta distributed random variable  $X \sim \beta(a, b)$ a  similar calculation as given in   (\ref{form_kappa2})  of
    Appendix \ref{betamom}
    shows 
        \begin{align}
                \kappa_m(\log(X^{d + 1}(1 - X)^d))  \nonumber 
                ={} &(d + 1)^m\psi_{m - 1}(a) + d^m\psi_{m - 1}(b) - (2d + 1)^m\psi_{m - 1}(a + b) \\
                ={}& d^m\big\{\psi_{m - 1}(a) + \psi_{m - 1}(b) - 2^m\psi_{m - 1}(a + b)\big\} \nonumber  \\
                &+ ((d + 1)^m - d^m) \psi_{m - 1}(a) - ((2d + 1)^m - (2d)^m) \psi_{m - 1}(a + b)\nonumber   \\
                ={}& d^m\big\{\psi_{m - 1}(a) + \psi_{m - 1}(b) - 2^m\psi_{m - 1}(a + b)\big\} \nonumber  \\
                &+ \sum \limits_{k = 0}^{m - 2} \binom{m}{k} d^k(\psi_{m - 1}(a) - 2^k \psi_{m - 1}(a + b)) \nonumber  \\
                &+ m d^{m - 1}(\psi_{m - 1}(a) - 2^{m - 1}\psi_{m - 1}(a + b)).            \label{cumulant_special}
        \end{align}
        Using the same calculations as in (\ref{form_kappa2}) we get
        \begin{align*}
            \psi_{m - 1}(a) + \psi_{m - 1}(b) - 2^m\psi_{m - 1}(a + b)
            ={} &\Big \{\psi_{m - 1}(a) + \psi_{m - 1}(b) - 2\psi_{m - 1}\Big (\frac{a + b}{2}\Big )\Big \}\\
                &+ \Big \{\psi_{m - 1}\Big (\frac{a + b}{2}\Big) - \psi_{m - 1}\Big (\frac{a + b + 1}{2}\Big )\Big\}
        \end{align*}
        Applying (\ref{bnd_kn}), the mean-value theorem and the estimate (\ref{bound_psi}) from Appendix~\ref{appendix_polygamma} yields
        \begin{align}
  \nonumber 
   |\psi_{m - 1}(a) + \psi_{m - 1}(b) - 2^m\psi_{m - 1}(a + b)| 
                \le{}&  \Big (1 + \frac{1}{\min(a, b)}\Big ) \frac{(m + 1)!  (b - a)^2}{\min(a, b)^{(m+1)}} \\
                &+ m! 2^m(a  + b)^{-m} \Big (1 + \frac{2}{a + b}\Big  ).    \nonumber  \\
                \le{}& \frac{ (m + 1)! 2^m}{\min(a, b)^{m} }\Big (1 + \frac{2}{\min(a, b)}\Big) \Big (\frac{(b - a)^2}{\min(a, b)} + 1\Big). 
          \label{cumulant_bound1}
        \end{align}
        The second part of the sum can be approximated using (\ref{bound_psi})
        \begin{align}
            \label{cumulant_bound2}
            \begin{split}
                \Big  |\sum \limits_{k = 0}^{m - 2} \binom{m}{k} d^k(\psi_{m - 1}(a) - 2^k \psi_{m - 1}(a + b))\Big | 
                \le{}& 2|\psi_{m - 1}(a)| \sum \limits_{k = 0}^{m - 2} \binom{m}{k} (2d)^k \\
                \le{}& 2|\psi_{m - 1}(a)| (2d + 1)^{m - 2} \\
                \le{}& (m - 1)! 2^{m - 1} \frac{(d + 1)^{m - 2}}{a^{m - 1}} \Big (1 + \frac{1}{a}\Big  ). 
            \end{split}
        \end{align}
        For the last summand, we again use the same formula as in (\ref{form_kappa2}), together with the bound (\ref{bnd_kappa})
        \begin{align}
            \label{cumulant_bound3}
            \begin{split}
                m d^{m - 1}|\psi_{m - 1}(a) - 2^{m - 1}\psi_{m - 1}(a + b)| 
                \le{}& m d^{m - 1}\Big | \psi_{m - 1}(a) - \psi_{m - 1}\Big  (\frac{a + b}{2}\Big )\Big  | \\
                &+ \frac{m d^{m - 1}}{2}\Big  |\psi_{m - 1}\Big (\frac{a + b}{2}\Big  ) - \psi_{m - 1}\Big  (\frac{a + b + 1 }{2}\Big )\Big | \\
                \le{}& m d^{m - 1} \frac{|b - a| + 1}{2} |\psi_m(\min(a, b))| \\
                \le{}& (m + 1)! \frac{|b - a| + 1}{2} \frac{d^{m - 1}}{\min(a, b)^m} \Big  (1 + \frac{1}{\min(a, b)}\Big ). 
            \end{split}
        \end{align}
      
        Finally, note that
 $
            \min\big\{\frac{p_n + 1}{2}(2n - 2i + 2) - \frac{j}{2}, \frac{p_n + 1}{2}(2n - 2i + 2)\big\} 
         \ge 1.
$
        Combining this inequality and plugging the inequalities (\ref{cumulant_bound1}), (\ref{cumulant_bound2}) and (\ref{cumulant_bound3}) into (\ref{cumulant_special}) shows the desired result
        \begin{align*}
            |\kappa_m(T_n)| 
            \le{}& \sum \limits_{i = 1}^{\lfloor nt \rfloor}\sum \limits_{j = 0}^{\lfloor p_ns \rfloor - 1} \Big\{ \frac{3 (m + 1)! 2^m (\lfloor nt \rfloor - i)^m}{((p_n + 1)(n - i + 1/2))^m}   \cdot 
            \Big (\frac{j^2}{4(p_n + 1) (n - i + 1/2)} + 1   \Big  ) \\
            &\qquad + (m - 1)! 2^{m } \frac{(\lfloor nt \rfloor - i + 1)^{m - 2}}{((p_n + 1 )(n - i + 1/2))^{m - 1}} \\
            &\qquad + (m + 1)! ({j + 1}) \frac{({j + 1}) (\lfloor nt \rfloor - i)^{m - 1}}{((p_n + 1)(n - i + 1/2))^m}  \Big\} \\
            \le{}& 3 (m + 1)!\sum \limits_{i = 1}^{n}\sum \limits_{j = 0}^{p_n - 1} \Big\{ \frac{2^m}{p_n^m}\Big (\frac{j^2}{(p_n + 1) (2n - 2i + 1)} + 1\Big ) +   \frac{p_n^{1 - m} 4^m}{n - i + 1/2}  \Big\} \\
            \le{}& 6 \cdot 4^m (m + 1)!\sum \limits_{i = 1}^{n}\sum \limits_{j = 0}^{p_n - 1} \Big\{p_n^{1 - m}\frac{1}{n - i + 1} + p_n^{-m} \Big\} \\
            \le{}& 6 \cdot 4^m (m + 1)!p_n^{-m}(n p_n + (\log(n) + 1)p_n^2) 
            \le 12 \cdot 4^m (m + 1)!n p_n^{2-m}.
        \end{align*}
        The inequality for $\kappa_m(T_n')$ is proven in essentially the same way. Since the exponents of $p_{2n + 1, 2i - 1, j}$ and $1 - p_{2n + 1, 2i - 1, j}$ in the definition of $T_n'$ are the same, we do not  need (\ref{cumulant_special}) and can apply (\ref{cumulant_bound1}) directly to the formula (\ref{form_kappa2}) of the cumulant. This makes the proof for $T_n'$ much easier and the details are omitted for the sake of brevity.

    \subsection{On the Polygamma functions} \label{appendix_polygamma}
    Throughout this section $a, b, t$ and $z$ will be positive real numbers. \br

    Recall the definition of  
   the Polygamma function in \eqref{polgma} and note that formula $6.4.1$ from \cite{abrsteg1964} states
    \begin{align}
        \psi_k(z) = (-1)^{k + 1} \int_0^\infty \frac{t^ke^{-tz}}{1 - e^{-t}} dt. \label{poly_integral}
    \end{align}
   In particular, this implies that $\psi_k$ is positive and decreasing for odd $k$ and negative and increasing for even $k$.  
    Observing the estimate 
       \begin{align*}
        |\psi_n(z)| = \int_0^\infty \frac{t^ne^{-zt}}{1 - e^{-t}}dt \ge \int_0^\infty t^{n - 1} e^{-zt} dt = \int_0^\infty s^{n - 1} e^{-s} z^{-n} ds = (n - 1)! z^{-n}
    \end{align*}
    we obtain the inequality
    \begin{align}
        (n - 1)! z^{-n} \le |\psi_n(z)| \le (n - 1)! z^{-n} + n! z^{-(n + 1)} \le n!z^{-n} \Big (1 + \frac{1}{z}\Big ) \label{bound_psi},
    \end{align}
    where we have used the inequality $\frac{1}{1 - e^{-t}} \le 1 + \frac{1}{t}$ for the upper bound. From  (\ref{bound_psi}) we obtain the inequalities
    \begin{align}
        \frac{b}{a(a + b)} \le \psi_1(a) - \psi_1(a + b)         = \int \limits_a^{a + b} |\psi_2(x)| dx \le \Big(1 + \frac{2}{a}\Big ) \frac{b}{a(a + b)}
         \label{bound_psi_diff}.
    \end{align}  
    This method also yields upper bounds for differences of higher-order polygamma functions. Note that $|\psi_k|$ is decreasing, which yields together with (\ref{bound_psi})
    \begin{align*}
        |\psi_k(a) - \psi_k(a + b)| = \int_a^{a + b} |\psi_{k + 1}(t)| dt \le b |\psi_{k + 1}(a)| \le b (k + 1)! a^{-(k + 1)}\Big (1 + \frac{1}{a}\Big ).
    \end{align*}
    From the monotonicity of $\psi_k$ and (\ref{bound_psi}) we can furthermore deduce
    \begin{align*}
        |\psi_k(a) - \psi_k(a + b)| \le |\psi_k(a)| \le k! a^{-k} \Big(1 + \frac{1}{a}\Big).
    \end{align*}
    Combining these inequalities proves
    \begin{align}
        |\psi_k(a) - \psi_k(a + b)| \le (k + 1)! \min(a, b) a^{-(k + 1)} \Big (1 + \frac{1}{a}\Big). \label{bound_psik_diff}
    \end{align}
   Inequality (\ref{bound_psi}) also yields
    \begin{align}
        &\Big |\psi_1(a) - \psi_1(a + b) - \tfrac{b}{a(a + b)}\Big |  
        = \Big |\psi_1(a) - \psi_1(a + b)+ \left(\tfrac{1}{a} - \tfrac{1}{a + b}\right)\Big | \le \frac{2}{a^2} + \frac{2}{(a + b)^2} \le \frac{4}{a^2} \label{var_first_order}
    \end{align}
    and
    \begin{align*}
        &\Big |\psi_1(a) + \psi_1(b) - 4 \psi_1(a + b) - \frac{(a - b)^2}{ab(a + b)}\Big |  \\
        \le{}& \Big |\psi_1(a) - \tfrac{1}{a} \Big | + \Big |\psi_1(b) - \tfrac{1}{b} \Big | + 4\Big |\psi_1(a + b) - \tfrac{1}{a + b} \Big | \\
        \le{}& \frac{2}{a^2} + \frac{2}{b^2} + \frac{8}{(a + b)^2} \le \frac{6}{(a \wedge b)^2},
    \end{align*}
    where the latter implies
    \begin{align}
        \left|\psi_1(a) + \psi_1(b) - 4 \psi_1(a + b) \right| \le \Big (6 + \frac{(a - b)^2}{(a \wedge b)} \Big  )\frac{1}{(a \wedge b)^2}. \label{bnd_var}
    \end{align}
    
    Finally, $|\psi_n|$ is log-convex by formula (1.4) from \cite{alzer2001}, i.e.
    \begin{align}
        \Big  |\psi_n\Big   (\frac{a + b}{2}\Big  )\Big  | \le \sqrt{|\psi_n(a) \psi_n(b)|} \le \frac{|\psi_n(a)| + |\psi_n(b)|}{2}. \label{log_conv}
    \end{align}

\end{document}